\newcommand {\C}        {{\mathbb{C}}}
\newcommand {\R}        {{\mathbb{R}}}
\newcommand {\N}        {{\mathbb{N}}}
\newcommand{\norm}[1]{\left\Vert#1\right\Vert}
\newcommand{\abs}[1]{\left\vert#1\right\vert}
\newcommand {\ri}	{{\mathrm i}}
 \newtheorem{theorem}{Theorem}[section]
 \newtheorem{lemma}[theorem]{Lemma}
 \numberwithin{equation}{section}
\title{Nonsmooth Rate-of-Convergence Analyses of Algorithms for Eigenvalue Optimization}
\author{Fatih~Kangal \and Emre~Mengi}
\date{\today}
\begin{document}
\maketitle

\begin{abstract}
\noindent
Non-smoothness at optimal points is a common phenomenon in many eigenvalue
optimization problems. We consider two recent algorithms to minimize the
largest eigenvalue of a Hermitian matrix dependent on one 
parameter, both proven to be globally convergent unaffected by
non-smoothness. One of these models the eigenvalue function with a piece-wise 
quadratic function, and effective in dealing with non-convex problems. The other projects
the Hermitian matrix into subspaces formed of eigenvectors, and effective in dealing
with large-scale problems. We generalize the latter slightly to cope with non-smoothness.
For both algorithms, we analyze the rate-of-convergence
in the non-smooth setting, when the largest eigenvalue is multiple at the minimizer
and zero is strictly in the interior of the generalized Clarke derivative, and prove that both algorithms
converge rapidly. The algorithms are applied to, and the deduced results are illustrated on 
the computation of the inner numerical radius, the modulus of the point on the boundary
of the field of values closest to the origin, which carries significance for instance for the 
numerical solution of a definite generalized symmetric eigenvalue problem.
\\[10pt]
\noindent
\textbf{Key words.} Eigenvalue Optimization, Non-Smooth Optimization, Global Optimization,
Rate-of-Convergence, Subspace Projections, Field of Values, Definite Matrix Pairs, Inner Numerical Radius,
Generalized Eigenvalue Problem 
\\[5pt]
\noindent
\textbf{AMS subject classifications.} 65F15, 90C26, 90C06, 15A60
\end{abstract}

\section{Introduction}
A pair of Hermitian matrices $A,B\in\C^{n\times n}$ is said to be definite if
\begin{equation}\label{definiteness}
	\begin{split}
			\gamma(A,B)\; &:= \; \min_{z\in\C^n, \norm{z}_2=1}\sqrt{(z^*Az)^2\; + \; (z^*Bz)^2} \\
					       &=  \; \min_{z\in\C^n, \norm{z}_2=1}\abs{z^*(A+ \ri B)z}	\\
					   	&= \;\;\;\;\;\: \min\left\{\abs{w} \; | \; w\in F(A+ \ri B)\right\} \; > \;0,
	\end{split}
\end{equation}
where $F(C)$ denotes the field of values of $C \in {\mathbb C}^{n\times n}$, the subset of ${\mathbb C}$
defined by
\[
F(C) \; := \; \left\{z^*Cz \in {\mathbb C} \; | \; z\in\C^n, \; \norm{z}_2=1\right\}.
\]


The definiteness of $(A,B)$ carries significance for the numerical solution of the generalized eigenvalue 
problem $Ax=\lambda Bx$ \cite{CHENG1999, Davies2001}. If $(A,B)$ is known to be definite, then the generalized 
eigenvalue problem can be transformed into another related problem $\widetilde{A} x  = \lambda \widetilde{B} x$ with 
$\lambda_{\min}(\widetilde{B}) = \gamma(A,B)$. Furthermore, the transformed problem can be solved
by calculating a Cholesky factorization $\widetilde{B} = R^\ast R$, and computing the eigenvalues
of the Hermitian matrix $R^{-\ast} \widetilde{A} R^{-1}$. This is a plausible procedure
with a small backward error provided $\gamma(A,B)$ is not small. 
Two other application areas concern Hermitian quadratic eigenvalue problems, in particular checking 
the hyperbolicity of such problems \cite{higham2002detecting}, and saddle point linear systems with 
symmetric indefinite coefficient matrices, in particular setting up a conjugate gradient iteration for 
such systems \cite{liesen2008nonsymmetric}.

Motivated by such applications, Cheng and Higham have focused on procedures for efficient
determination of whether a given pair $(A,B)$ of Hermitian matrices is definite or not \cite{CHENG1999}.
If the Hermitian pair is not definite, in the same paper, the authors have also considered the 
computation of a pair $(A+\Delta A_\ast, B + \Delta B_\ast)$ where $(\Delta A_\ast, \Delta B_\ast)$ 
solves the following minimization problem for a prescribed positive real number $\delta$:
\begin{equation}\label{eq:distance_definiteness}
	d_{\delta}(A,B)		:=	
			\min
				\left\{
					\left\|  
						\left[
							\begin{array}{cc}
								\Delta A & \Delta B
							\end{array}
						\right]  
					\right\|_2 \; \bigg| \; \gamma(A+\Delta A, B+\Delta B) \; \geq \; \delta  
				\right\}.
\end{equation}
The non-convex eigenvalue optimization problem
\begin{equation}\label{Main_problem}
	\min_{\theta \in [0,2\pi)} \: \lambda_{\max}( A \cos \theta + B \sin \theta),
\end{equation}
has shown to be closely related to both the determination of whether the pair $(A,B)$ is definite or not,
as well as the computation of an optimal $(\Delta A_\ast, \Delta B_\ast)$ such that 
$\gamma(A+\Delta A_\ast, B+\Delta B_\ast) \geq \delta$ and 
$
	d_{\delta}(A,B)		=
						\left\|
						\left[
							\begin{array}{cc}
								\Delta A_\ast & \Delta B_\ast
							\end{array}
						\right]
						\right\|_2. 
$
Note that above $\lambda_{\max}(\cdot)$ represents the largest eigenvalue of its matrix argument,
which is a notation we adopt throughout this text.
The minimal value in (\ref{Main_problem}) has a geometric meaning in terms of $F(A + \ri B)$;
in absolute value it corresponds to the inner numerical radius of $A+ \ri B$, i.e., the modulus of the
point on the boundary of $F(A + \ri B)$ closest to the origin. It is argued in \cite{CHENG1999} that the 
major challenge for the estimation of $d_{\delta}(A,B)$ is the global solution of the optimization problem 
in (\ref{Main_problem}).

The optimization problem in (\ref{Main_problem}) is only a special instance of a family of 
eigenvalue optimization problems
\begin{equation}\label{eq:main_problem2}
	\hskip 10ex
	\min_{\omega \in \Omega} \: \lambda_{\max} ({\mathcal A}(\omega)),	\quad\quad 
				{\mathcal A}(\omega) \; := \; \sum_{j=1}^\kappa f_{j}(\omega) A_j,
\end{equation}
where $\Omega$ is an interval in ${\mathbb R}$, the matrices $A_1, \dots, A_\kappa  \in {\mathbb C}^{n\times n}$
are Hermitian, and the functions $f_1, \dots, f_\kappa : \underline{\Omega} \rightarrow {\mathbb R}$ are 
real analytic on their domain $\underline{\Omega}$, which is an open interval in ${\mathbb R}$ 
containing $\Omega$. An eigenvalue optimization problem of the form (\ref{eq:main_problem2})
is typically non-convex excluding the very special affine case ${\mathcal A}(\omega) = A_1 + \omega A_2$.

Recently we have developed general algorithms \cite{Mengi2014, kangal2018} that are in many cases effective
in solving non-convex eigenvalue optimization problems of the form (\ref{eq:main_problem2}) globally.
The former of these \cite{Mengi2014} employs piece-wise quadratic functions to model the objective 
eigenvalue function, and is meant for small- to medium-scale problems. The latter \cite{kangal2018} introduces 
a subspace framework to deal with problems when the size of ${\mathcal A}(\omega)$ is large. It repeatedly 
projects ${\mathcal A}(\omega)$ to small subspaces, and minimizes the largest eigenvalue of the resulting 
projected matrix-valued function.

Here we present the adaptations of the algorithms in \cite{Mengi2014, kangal2018} for the 
solution of (\ref{Main_problem}). This paves the way for efficient determination of whether a Hermitian pair $(A,B)$
is definite or not, as well as efficient computation of the distance $d_{\delta}(A,B)$ in (\ref{eq:distance_definiteness}) 
and a nearest definite pair $(\widetilde{A},\widetilde{B})$ such that $\gamma(\widetilde{A},\widetilde{B}) \geq \delta$.
The adaptation of the algorithm in \cite{Mengi2014} is guaranteed to converge to the global
minimizer of (\ref{Main_problem}), and performs well in practice on small- to medium-scale problems.
The subspace framework in \cite{kangal2018} extends the range of applicability to quite large Hermitian matrix pairs.

The main contribution of this work is on the theoretical side. Global convergence of the algorithms has already been 
established; \cite{kangal2018} is globally convergent for the family of optimization problems (\ref{eq:main_problem2})
provided the projected problems are solved globally, whereas the algorithm in \cite{Mengi2014} is globally convergent 
for problems of the form (\ref{eq:main_problem2}) provided a global lower bound is known on 
$\lambda_{\max}''({\mathcal A}(\omega))$ over all $\omega$ where $\lambda_{\max}({\mathcal A}(\omega))$ is 
differentiable. We are also quite informed about the rate-of-convergences of these algorithms in the smooth case 
when $\lambda_{\max}({\mathcal A}(\omega_\ast))$ is simple at a converged global minimizer $\omega_\ast$; 
the subspace framework \cite{kangal2018} converges at a superlinear rate \cite{kangal2018, Kressner2017} 
both in theory and in practice, while we observe that the algorithm in \cite{Mengi2014} converges at a linear rate
(even though a formal proof is open, numerical experiments indicate a linear convergence convincingly). 
However, little is known about the rate-of-convergences in the presence of non-smoothness when 
$\lambda_{\max}({\mathcal A}(\omega_\ast))$ is not simple. In this work, we analyze the rate-of-convergences
of the algorithm in \cite{kangal2018, Kressner2017} on the problems of the form (\ref{eq:main_problem2})
in the non-smooth setting. We assume $\lambda_{\max}({\mathcal A}(\omega_\ast))$
is multiple, moreover $0 \in {\rm Int} \: \partial \lambda_{\max}({\mathcal A}(\omega_\ast))$
(i.e., zero lies strictly in the interior of $\partial \lambda_{\max}({\mathcal A}(\omega_\ast))$),
where the generalized Clarke derivative $\partial f (\omega)$ at $\omega_\ast$ of a univariate 
function $f(\omega)$ that is differentiable almost everywhere excluding a set $\Gamma$ of measure zero
is given by \cite{Clarke1990}
\begin{equation}\label{eq:gen_derivative}
	\partial f(\omega_\ast)
			\;\;	:=	\;\;
	{\rm Co}
	\left\{
		\lim_{k\rightarrow \infty} f'(\omega^{(k)})	\;	\big|	\;	
								\omega^{(k)} \rightarrow \omega_\ast,
								\;\;
								\omega^{(k)} \notin \Gamma \: \forall k
	\right\}
\end{equation}
with ${\rm Co}(\cdot)$ denoting the convex hull. 
The condition $0 \in {\rm Int} \: \partial \lambda_{\max}({\mathcal A}(\omega_\ast))$ is equivalent
to $\lambda_{\max}({\mathcal A}(\omega))$ having non-zero left-hand and non-zero right-hand 
derivatives at $\omega_\ast$, which is generically the case. In our analysis for the subspace
framework, we keep an additional assumption, namely we assume $0 \notin {\rm bd} \: \partial \lambda_{j}({\mathcal A}(\omega_\ast))$
(i.e., zero is not on the boundary of $\partial \lambda_{j}({\mathcal A}(\omega_\ast))$)
for the $j$th largest eigenvalue $\lambda_{j}({\mathcal A}(\omega))$ if 
$\lambda_{j}({\mathcal A}(\omega_\ast))
				=$
	$\lambda_{\max}({\mathcal A}(\omega_\ast))$. 
This assumption also holds generically in the non-smooth setting, and amounts to requiring that
$\lambda_j({\mathcal A}(\omega))$ has non-zero left-hand and right-hand 
derivatives at $\omega_\ast$. We prove that if the maximum of the errors of the last two iterates 
of the algorithm in \cite{Mengi2014} is $h$, the error of the next iterate 
is $O(h^2)$. We also generalize the subspace framework in \cite{kangal2018} to cope with the non-smooth setting, 
and show rigorously that the iterates of the proposed generalized framework converge at a quadratic rate.

Our work is exposed in the following order. Background on definite pairs and on the distance $d_{\delta}(A,B)$
are summarized in Section \ref{sec:background}; in particular the links between these concepts, the eigenvalue 
optimization problem (\ref{Main_problem}) and the inner numerical radius of $A + \ri B$ have been discussed.
The crucial task is the solution of (\ref{Main_problem}), equivalently the computation of the inner numerical
radius of $A + \ri B$. In Section \ref{small_scale_algorithms}, two algorithms are spelled out 
to compute the inner numerical radii for small to medium size matrices, namely a level-set method
and the algorithm in \cite{Mengi2014} based on piece-wise quadratic support functions. The latter
is presented in the general scope of (\ref{eq:main_problem2}) pointing out how it can be adapted for
(\ref{Main_problem}). The remarkable contribution is a rate-of-convergence analysis in 
Section \ref{sec:support_rate_convergence} for the piece-wise quadratic support based algorithm 
in the non-smooth setting when $\lambda_{\max}({\mathcal A}(\omega))$ is multiple and not differentiable 
at a converged global minimizer. The analysis indicates a rapid convergence, surprisingly faster than 
the smooth case when $\lambda_{\max}({\mathcal A}(\omega))$ is simple at the minimizer. 
This is followed by Section \ref{sec:subspace} which is devoted to the subspace framework 
to deal with (\ref{eq:main_problem2}) when the Hermitian matrices $A_1, \dots, A_\kappa$ are
large; the proposed framework generalizes the basic one in \cite{kangal2018} taking into account
also the possible non-smoothness at the optimal point. It is in particular applicable to solve 
(\ref{Main_problem}) and to compute the inner numerical radius of $A + \ri B$ for large $A$, $B$. 
In Section \ref{sec:subspace_rate-of-convergence}, we establish a quadratic rate-of-convergence 
of the proposed subspace framework formally in the non-smooth case, which was left open by the 
previous works. Numerical examples at the ends of Sections \ref{small_scale_algorithms} and \ref{sec:subspace} 
illustrate the efficiency of the algorithms, and confirm that the rate-of-convergences established 
in theory are realized in practice.
 
\section{Background on Definite Pairs and Nearest Definite Pairs}\label{sec:background}
\subsection{Connections with Fields of Values}\label{sec:definite_bg}
Recall that a given Hermitian pair $(A,B)$ is definite if 
\begin{equation*}
	\begin{split}
		\gamma(A,B) & \; = \; \min_{z\in\C^n, \norm{z}=1}\sqrt{(z^*Az)^2\; + \; (z^*Bz)^2} \\
					& \; = \quad \;	  \min \{ |z| \; | \; z \in F(A + \ri B) \}	\;\;  >  \;\; 0.
	\end{split}
\end{equation*}
The quantity $\gamma(A,B)$ is called the Crawford number. If  a given Hermitian pair $(A,B)$ is known to be definite, 
then the following result \cite{CHENG1999,STEWART1979} is helpful for the numerical solution of the generalized 
eigenvalue problem $Ax=\lambda Bx$.
\begin{theorem}\label{theta_existence}
	Suppose that $(A,B)$ is a definite Hermitian pair. Letting
	\[
	A_{\theta} \; := \; A\cos\theta + B\sin\theta \quad \text{and} \quad B_{\theta} \; := \; -A\sin\theta + B\cos\theta,
	\]
	for $\theta \in [0,2\pi)$, and $\phi \in [0, 2\pi)$ be such that  
	$\gamma(A,B) e^{{\rm i} \phi} = \arg \min_{z \in F(A + {\rm i} B)} \: |z|$, the following assertions hold:
	\begin{enumerate}
		\item[\bf (i)] $A_{\theta}+ \ri B_{\theta} = e^{-\ri \theta}(A+ \ri B)$.
		\item[\bf (ii)] The pair $(u_\theta,v_\theta)$ is an eigenvalue of $(A_\theta,B_\theta)$ 
		(i.e., ${\rm det}(v_\theta A_\theta - u_\theta B_\theta)=0$) if and only if the pair $(u, v)$ defined by		
		\[
			\left[
			\begin{array}{c}
				v \\ 
				u
			\end{array}
			\right]
			:=
			\left[
			\begin{array}{cc}
				\cos\theta & \sin\theta \\
				-\sin\theta & \cos\theta
			\end{array}
			\right]
			\left[
			\begin{array}{c}
				v_\theta \\
				u_\theta
			\end{array}
			\right]
		\] 
		is an eigenvalue of $(A,B)$ (i.e., ${\rm det}(vA - uB)=0$).
		\item[\bf (iii)] The matrix $B_{\varphi}$ is positive definite, indeed 
		$\lambda_{\min}(B_{\varphi}) = \gamma(A,B)$, where $\varphi := -\pi/2 + \phi$.
		\item[\bf (iv)] The inclusions 
			$F(A_{\varphi} + \ri B_{\varphi}) \; \subseteq \; H_{\varphi} := \{ \omega \in {\mathbb C} \; | \; {\rm Im}(\omega) \geq \gamma(A,B) \}$,
		and $\; \ri \gamma(A,B) \in F(A_{\varphi} + \ri B_{\varphi})$ hold.
	\end{enumerate}
\end{theorem}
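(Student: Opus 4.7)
The four parts divide naturally into a direct-algebra block (i)--(ii) and a geometric block (iii)--(iv) that exploits the convexity of $F(A + \ri B)$. The overall strategy is to prove (i) first, use it to reduce (iii) and (iv) to a statement about a single half-plane containing a rotated field of values, and derive (iii) from (iv). Part (ii) is independent and purely algebraic.

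\textbf{Parts (i) and (ii).} For (i), I would simply expand and regroup:
\[
    A_\theta + \ri B_\theta
    \; = \; A(\cos\theta - \ri\sin\theta) + B(\sin\theta + \ri \cos\theta)
    \; = \; (A + \ri B)\, e^{-\ri \theta},
\]
using $\sin\theta + \ri\cos\theta = \ri\, e^{-\ri\theta}$. For (ii), invert the rotation to write $v = v_\theta\cos\theta + u_\theta\sin\theta$ and $u = -v_\theta\sin\theta + u_\theta\cos\theta$, substitute into $vA - uB$, and group the $v_\theta$ and $u_\theta$ terms to obtain the identity $vA - uB = v_\theta A_\theta - u_\theta B_\theta$. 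The two determinants then coincide, which immediately gives the equivalence of eigenvalues.

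\textbf{Part (iv).} This is the geometric core. Apply (i) with $\theta = \varphi = -\pi/2 + \phi$; since $e^{-\ri\varphi} = \ri\, e^{-\ri\phi}$, we get $A_\varphi + \ri B_\varphi = \ri\, e^{-\ri\phi}(A + \ri B)$, hence $F(A_\varphi + \ri B_\varphi) = \ri\, e^{-\ri\phi}\, F(A + \ri B)$. Now invoke the Toeplitz--Hausdorff theorem: $F(A + \ri B)$ is convex. The point $\gamma(A,B)\, e^{\ri \phi}$ is its closest point to the origin, so the standard supporting-hyperplane/projection characterization of a convex set gives
\[
    F(A + \ri B) \;\subseteq\; \{\, w \in \C \;|\; \mathrm{Re}(e^{-\ri\phi} w) \geq \gamma(A,B)\,\}.
\]
Multiplying by $\ri\, e^{-\ri\phi}$ rotates this half-plane into $\{\zeta \in \C \;|\; \mathrm{Im}(\zeta) \geq \gamma(A,B)\} = H_\varphi$, yielding the desired inclusion. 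The image of $\gamma(A,B)\, e^{\ri\phi}$ under the same rotation is $\ri\, \gamma(A,B)$, so $\ri\, \gamma(A,B) \in F(A_\varphi + \ri B_\varphi)$.

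\textbf{Part (iii).} Given (iv), for any unit $z$ write $z^*(A_\varphi + \ri B_\varphi) z = z^* A_\varphi z + \ri\, z^* B_\varphi z$; the value lies in $H_\varphi$, and its imaginary part $z^* B_\varphi z$ is exactly $\gamma(A,B)$ or larger. This forces $B_\varphi \succ 0$ and $\lambda_{\min}(B_\varphi) \geq \gamma(A,B)$. For the reverse inequality, pick a unit $z_\ast$ attaining $\ri\, \gamma(A,B) \in F(A_\varphi + \ri B_\varphi)$, which forces $z_\ast^* B_\varphi z_\ast = \gamma(A,B)$. Since $B_\varphi$ is Hermitian, this shows $\lambda_{\min}(B_\varphi) = \gamma(A,B)$.

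\textbf{Expected obstacle.} There is no deep difficulty; the only point requiring care is the tracking of the rotation in (iv)---verifying that $\varphi = -\pi/2 + \phi$ is exactly the angle that brings the closest point of $F(A+\ri B)$ onto the positive imaginary axis---and invoking Toeplitz--Hausdorff to justify the supporting-hyperplane step. Everything else is bookkeeping.
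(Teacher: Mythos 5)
Your proof is correct. Note that the paper itself offers no proof of this theorem---it is quoted from Cheng--Higham and Stewart---and your argument (the elementary rotation identities for (i)--(ii), then Toeplitz--Hausdorff convexity plus the nearest-point supporting half-plane for (iv), from which (iii) follows by reading off imaginary parts of the quadratic forms) is precisely the standard route in those references; the rotation bookkeeping $e^{-\ri\varphi}=\ri\,e^{-\ri\phi}$ and the half-plane image under multiplication by $\ri\,e^{-\ri\phi}$ both check out, so there are no gaps.
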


The definiteness of a pair $(A,B)$ can be inferred from the inner numerical radius of $A + \ri B$,
where, for a matrix $C\in\C^{n\times n}$, its inner numerical radius is defined by
\[
	\zeta(C) \; :=\; \min\left\{\abs{z} \; | \; z \text{ is on the boundary of }F(C)\right\}.
\]
It follows from (\ref{definiteness}) that the pair $(A,B)$ is definite if and only if $0\notin F(A+ \ri B)$ 
if and only if $\gamma(A,B)=\zeta(A+ \ri B)$. The next result describes a concrete approach to
determine the definiteness of $(A,B)$ by solving an eigenvalue optimization problem associated
with $\zeta(A + \ri B)$.
\begin{theorem}[Cheng $\&$ Higham \cite{CHENG1999}]\label{char_inner}
	The inner numerical radius satisfies
	\begin{equation}\label{eq:eigopt_char}
		\zeta(C) \; = \; \abs{\min_{\theta \in [0,2\pi)}\lambda_{\max}(H(\theta))}
	\end{equation}
	where $H(\theta)=(Ce^{-\ri \theta} +C^\ast e^{\ri \theta})/2$. Letting
	$\; \theta_\ast := \arg\min_{\theta \in [0,2\pi)}\lambda_{\max}(H(\theta))$, 
	we furthermore have
	\begin{enumerate}
		\item[\bf (i)]  $0\in F(C) $ if and only if $\lambda_{\max}(H(\theta_\ast))\geq0$, and 
		\item[\bf (ii)] the point $\zeta(C)e^{i\phi}$ is on the boundary of $F(C)$ with 
		\begin{equation*}
				\phi
					\; := \;
				\left\{
					\begin{split}
						\; \theta_\ast \;\;\;\;\;		\quad\quad	&	{\rm if} \;\; 0\in F(C) 	\\
						\; \theta_\ast + \pi		\quad\quad 	&	{\rm if} \;\; 0\notin F(C)	
					\end{split}
				\right.
		\end{equation*}
	\end{enumerate}
\end{theorem}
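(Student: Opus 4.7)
My plan is to identify $\lambda_{\max}(H(\theta))$ as the support function of the field of values $F(C)$ at direction $e^{\ri\theta}$, and then to reduce the theorem to a convex-geometric statement about the distance from the origin to a compact convex planar set. A direct computation gives $z^\ast H(\theta) z = {\rm Re}(e^{-\ri\theta} z^\ast C z)$ for any $z\in\C^n$ with $\norm{z}_2 = 1$, and maximizing over unit $z$ yields
\[
	\lambda_{\max}(H(\theta)) \; = \; \max_{w\in F(C)} {\rm Re}(e^{-\ri\theta} w) \; =: \; h(\theta).
\]
The Toeplitz--Hausdorff theorem ensures that $F(C)$ is compact and convex, and hence coincides with the intersection of its closed supporting half-planes $\{w : {\rm Re}(e^{-\ri\theta} w) \leq h(\theta)\}$. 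The goal then becomes to show that $\min_\theta h(\theta)$ has the sign that distinguishes $0 \in F(C)$ from $0 \notin F(C)$, and that its modulus equals $\zeta(C)$.

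For the case $0 \in F(C)$, I would first note that $h(\theta) \geq {\rm Re}(e^{-\ri\theta}\cdot 0) = 0$ for every $\theta$. Letting $p = \zeta(C) e^{\ri\phi_0}$ be any closest boundary point of $F(C)$ to the origin, the supporting hyperplane to $F(C)$ at $p$ has outward normal $e^{\ri\phi_0}$ (because the segment from $0$ to $p$ realizes the minimal distance), giving $F(C) \subseteq \{w : {\rm Re}(e^{-\ri\phi_0} w) \leq \zeta(C)\}$, and since $p$ saturates this bound we get $h(\phi_0) = \zeta(C)$. For the reverse inequality $h(\theta) \geq \zeta(C)$ for all $\theta$, I would argue by contradiction: if $h(\theta_0) < \zeta(C)$ for some $\theta_0$, the open disk $B(0, \zeta(C))$ lies in the interior of $F(C)$ (since it contains $0 \in F(C)$ and meets no boundary point), so the point $q := h(\theta_0) e^{\ri\theta_0}$ with $|q| < \zeta(C)$ is interior to $F(C)$, yet $q$ would maximize ${\rm Re}(e^{-\ri\theta_0}\cdot)$ over $F(C)$; perturbing $q$ slightly in the direction $e^{\ri\theta_0}$ keeps it in $F(C)$ and strictly increases the real part, a contradiction. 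Hence $\min_\theta h(\theta) = \zeta(C) \geq 0$, attained at $\theta_\ast = \phi_0$, which gives $\phi = \theta_\ast$ as required by (ii) for this case.

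For the case $0 \notin F(C)$, the strict separation of the disjoint convex sets $\{0\}$ and $F(C)$ produces a direction $\theta$ with $h(\theta) < 0$, so $\min_\theta h(\theta) < 0$. Let $p = \zeta(C) e^{\ri\phi_0}$ be the unique projection of $0$ onto the closed convex set $F(C)$, which lies on $\partial F(C)$. The standard characterization of the projection yields $F(C) \subseteq \{w : {\rm Re}(e^{-\ri\phi_0} w) \geq \zeta(C)\}$, and hence $h(\phi_0 + \pi) = -\min_{w\in F(C)}{\rm Re}(e^{-\ri\phi_0} w) = -\zeta(C)$. Conversely, for any $\theta$ with $h(\theta) < 0$ every $w \in F(C)$ lies in a half-plane that misses $0$ at distance $|h(\theta)|$, giving $|w| \geq -h(\theta)$ and consequently $\zeta(C) \geq -h(\theta)$. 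Combining these bounds yields $\min_\theta h(\theta) = -\zeta(C)$ attained at $\theta_\ast = \phi_0 + \pi$, i.e., $\phi = \theta_\ast + \pi \pmod{2\pi}$, as asserted in (ii). The dichotomy $\min_\theta h(\theta) \geq 0$ versus $\min_\theta h(\theta) < 0$ across the two cases then delivers (i).

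The step I expect to be the main obstacle is the reverse inequality $h(\theta) \geq \zeta(C)$ in the first case; it rests on the nontrivial observation that the open disk $B(0, \zeta(C))$ sits inside $F(C)$, which is what enables the ``interior maximizer'' contradiction. All remaining steps are routine appeals to the support-function identity $\lambda_{\max}(H(\theta)) = h_{F(C)}(\theta)$, Hahn--Banach separation, and the characterization of projections onto closed convex sets in the plane.
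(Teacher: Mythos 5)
Your proof is correct and follows essentially the same route as the paper, which only sketches the argument (citing Cheng--Higham): identify $\lambda_{\max}(H(\theta))$ with the support function $\max_{w\in F(C)}{\rm Re}(e^{-\ri\theta}w)$ of the convex set $F(C)$, note the bound is attained on the boundary, and convert the minimization over $\theta$ into the signed distance from the origin. Your elaboration of the two cases via the inscribed disk and the projection characterization fills in the details soundly.
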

\noindent
The eigenvalue optimization characterization (\ref{eq:eigopt_char}) is a consequence of the facts that the inner
numerical radius of $C$ and $Ce^{-\ri \theta}$ are the same, since $F(Ce^{-\ri \theta}) = e^{-\ri \theta} F(C)$,
as well as
\[
	\lambda_{\min}
		\left(
			\frac{Ce^{-\ri \theta} + C^\ast e^{\ri \theta}}{2}
		\right) 
			\;	\leq	\;
				{\rm Re} (z)
			\;	\leq	\;
	\lambda_{\max}
		\left(
			\frac{C e^{-\ri \theta} + C^\ast e^{\ri \theta}}{2}
		\right) 
	\quad
	\forall z \in F(Ce^{-\ri \theta})
\]
where the bounds are attained at points on the boundary of $F(Ce^{-\ri \theta})$.

Next suppose that $(A,B)$ is not definite. For a prescribed real number $\delta > 0$, we 
consider $d_\delta(A,B)$ defined as in (\ref{eq:distance_definiteness}), which corresponds
to a distance from $(A,B)$ to a nearest definite pair that is $\delta$-away from indefiniteness.
It is clear from this definition that if $(A,B)$ is definite with $\gamma(A,B) > \delta$, then we have $d_{\delta}(A,B)=0$.
Moreover, $d_\delta(A,B)$ has a characterization in terms of $\zeta(A + \ri B)$, which is stated next \cite{CHENG1999}.
\begin{theorem}\label{solving_distance}
	For a given Hermitian pair $(A,B)$, let $\; C := A + \ri B \;$ and 
	$\; \theta_\ast :=  \arg\min_{\theta \in [0,2\pi)}\lambda_{\max}(H(\theta))$,
	where 
	\[
		H(\theta)=A\cos\theta+B\sin\theta = (Ce^{-{\rm i} \theta} + C^\ast e^{{\rm i} \theta})/2.
	\] 
	\begin{enumerate}
		\item[\bf (i)]
		We have
		\begin{itemize}
			\item $d_{\delta}(A,B)=\delta+\zeta(C)$ if $\; 0\in F(C)$ \\
			(equivalently if $\; \lambda_{\max}(H(\theta_\ast))\geq0$), or 
			\item $d_{\delta}(A,B)=\max\{\delta-\zeta(C),0\}$ if $\; 0\notin F(C)$ \\
			(equivalently if $\; \lambda_{\max}(H(\theta_\ast)) < 0$). 
		\end{itemize}
		\item[\bf (ii)]
		Furthermore, letting $H(\theta_\ast)=Q{\rm diag}(\lambda_i)Q^\ast$ with $\lambda_n\leq\dots\leq\lambda_1$ 
		be a spectral decomposition, two sets of optimal perturbations in both cases are given by
		\begin{equation*}
			\begin{split}
			\Delta A_1 & =\cos\theta_\ast Q {\rm diag}(\min\{-\delta-\lambda_i,0\})Q^\ast,	\\
			\Delta B_1 & =\sin\theta_\ast Q {\rm diag}(\min\{-\delta-\lambda_i,0\})Q^\ast
			\end{split}
		\end{equation*}
		and
		\begin{equation*}
			\Delta A_2=-d_{\delta}(A,B)\cos\theta_*I, \quad \Delta B_2=-d_{\delta}(A,B)\sin\theta_*I.
		\end{equation*}
		\item[\bf (iii)] The inner numerical radius $\zeta(A+\Delta A_j + \ri (B+\Delta B_j))$ is attained at
		$\zeta(A+\Delta A_j + \ri (B+\Delta B_j)) e^{\ri(\theta_\ast + \pi)}$ for $j = 1,2$.
	\end{enumerate}
\end{theorem}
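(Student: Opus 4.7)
The plan is to convert the constraint $\gamma(A+\Delta A, B+\Delta B)\geq\delta$ into a purely spectral condition on the Hermitian family $H'(\theta):=(A+\Delta A)\cos\theta+(B+\Delta B)\sin\theta$, and then to solve the resulting joint minimization over $\theta$ and over $(\Delta A,\Delta B)$ in two stages. Writing $C':=(A+\Delta A)+\ri(B+\Delta B)$, the constraint $\gamma\geq\delta$ is equivalent to $0\notin F(C')$ together with $\zeta(C')\geq\delta$, and parts (i)--(ii) of Theorem~\ref{char_inner} translate this into the existence of some $\theta\in[0,2\pi)$ with $\lambda_{\max}(H'(\theta))\leq-\delta$. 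Setting $\Delta H_{\theta}:=\Delta A\cos\theta+\Delta B\sin\theta$, this yields
\[
    d_{\delta}(A,B)
    \;=\;
    \min_{\theta\in[0,2\pi)}\,
    \min_{\Delta A,\Delta B}
    \bigl\{\,
        \|[\Delta A\;\;\Delta B]\|_{2}
        \,:\,
        \lambda_{\max}(H(\theta)+\Delta H_{\theta})\leq-\delta
    \,\bigr\}.
\]

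For the inner minimum at fixed $\theta$, the key observation is that the linear map $(\Delta A,\Delta B)\mapsto\Delta H_{\theta}$ is surjective onto the Hermitian matrices and satisfies $\|\Delta H_{\theta}\|_{2}\leq\|[\Delta A\;\;\Delta B]\|_{2}$, with equality attained by the block-structured choice $\Delta A=\cos\theta\,\Delta H$ and $\Delta B=\sin\theta\,\Delta H$ for any prescribed Hermitian $\Delta H$. The inner problem therefore collapses to minimizing $\|\Delta H\|_{2}$ over Hermitian $\Delta H$ subject to $H(\theta)+\Delta H\preceq-\delta I$; a classical calculation using the spectral decomposition $H(\theta)=Q\Lambda Q^{*}$ delivers the optimum $\Delta H=Q\,\mathrm{diag}(\min\{-\delta-\lambda_{i},0\})\,Q^{*}$ with value $\max\{0,\delta+\lambda_{\max}(H(\theta))\}$. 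Minimizing this outer objective over $\theta$ commutes with the monotone operation $\max\{0,\cdot\}$ and produces $d_{\delta}(A,B)=\max\{0,\delta+\lambda_{\max}(H(\theta_{\ast}))\}$. Splitting on the sign of $\lambda_{\max}(H(\theta_{\ast}))$ and using the identity $\zeta(C)=|\lambda_{\max}(H(\theta_{\ast}))|$ from Theorem~\ref{char_inner} establishes part (i), and $(\Delta A_{1},\Delta B_{1})$ in part (ii) emerges directly as the optimizer at $\theta=\theta_{\ast}$.

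For the uniform-shift optimizer $(\Delta A_{2},\Delta B_{2})$, I would verify feasibility and extremality by direct computation: using $\cos\theta_{\ast}\cos\theta+\sin\theta_{\ast}\sin\theta=\cos(\theta-\theta_{\ast})$, the perturbed family becomes $H'_{2}(\theta)=H(\theta)-d_{\delta}(A,B)\cos(\theta-\theta_{\ast})I$, which at $\theta=\theta_{\ast}$ has largest eigenvalue $\lambda_{\max}(H(\theta_{\ast}))-d_{\delta}(A,B)$ equal to $-\delta$ in case~(i) and at most $-\delta$ in case~(ii), while $\|[\Delta A_{2}\;\;\Delta B_{2}]\|_{2}=d_{\delta}(A,B)$ trivially. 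Part (iii) then follows by applying Theorem~\ref{char_inner}(ii) to $(A+\Delta A_{j})+\ri(B+\Delta B_{j})$: since $0\notin F$ (as $\gamma\geq\delta>0$), the attaining angle is $\theta_{\ast}^{(j)}+\pi$, where $\theta_{\ast}^{(j)}$ minimizes $\lambda_{\max}(H'_{j}(\theta))$, and a direct check using the explicit form of each $(\Delta A_{j},\Delta B_{j})$ confirms $\theta_{\ast}^{(j)}=\theta_{\ast}$.

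The main obstacle is proving the inequality $\|\Delta H_{\theta}\|_{2}\leq\|[\Delta A\;\;\Delta B]\|_{2}$ that underpins the reduction to a single-matrix eigenvalue-shift problem; I would obtain it by writing $\Delta H_{\theta}=[\Delta A\;\;\Delta B]R_{\theta}$ with $R_{\theta}$ a $2n\times n$ block column of spectral norm one and applying sub-multiplicativity, while the equality case follows from the explicit block-structured construction. A secondary technical step is confirming $\theta_{\ast}^{(1)}=\theta_{\ast}$ for the first optimizer, which requires a brief local perturbation analysis of $\lambda_{\max}(H'_{1}(\theta))$ near $\theta_{\ast}$; the observation that $\Delta H_{1}$ is active only on the eigenspaces of $H(\theta_{\ast})$ associated with eigenvalues $\lambda_{i}\geq-\delta$ makes the verification routine.
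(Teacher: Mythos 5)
The paper does not prove Theorem~\ref{solving_distance}; it is quoted from Cheng and Higham \cite{CHENG1999} with a citation only, so there is no in-paper argument to compare against. Your proposal is essentially the standard argument behind that result, and its main structure is sound: the constraint $\gamma(A+\Delta A,B+\Delta B)\geq\delta$ is indeed equivalent, by convexity of the field of values (supporting half-planes), to the existence of a $\theta$ with $\lambda_{\max}(H'(\theta))\leq-\delta$; the reduction of the two-matrix perturbation to a single Hermitian shift via the isometry $R_{\theta}=[\cos\theta\, I;\ \sin\theta\, I]$ with $R_{\theta}^{*}R_{\theta}=I$ is exactly right, as is the eigenvalue-shift computation giving $\max\{0,\delta+\lambda_{\max}(H(\theta))\}$ and the commutation of $\min_{\theta}$ with the monotone map $x\mapsto\max\{0,\delta+x\}$. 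Parts (i) and (ii) follow as you describe.

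The one soft spot is part (iii) for $j=1$. You propose ``a brief local perturbation analysis of $\lambda_{\max}(H'_{1}(\theta))$ near $\theta_{\ast}$,'' but a local argument is not enough: to invoke Theorem~\ref{char_inner}(ii) for the perturbed pair you must show that $\theta_{\ast}$ is a \emph{global} minimizer of $\lambda_{\max}(H'_{1}(\theta))$, since otherwise the attaining angle would be $\theta'+\pi$ for some other global minimizer $\theta'$. Fortunately the global statement is a one-line consequence of Weyl's inequality: writing $N:=Q\,\mathrm{diag}(\min\{-\delta-\lambda_{i},0\})Q^{*}\preceq 0$, one has $H'_{1}(\theta)=H(\theta)+\cos(\theta-\theta_{\ast})N$; when $\cos(\theta-\theta_{\ast})\geq 0$ the perturbation has spectral norm at most $\|N\|_{2}=d_{\delta}(A,B)$, so $\lambda_{\max}(H'_{1}(\theta))\geq\lambda_{\max}(H(\theta))-d_{\delta}(A,B)\geq\lambda_{1}-d_{\delta}(A,B)=-\delta$, while when $\cos(\theta-\theta_{\ast})<0$ the perturbation is positive semidefinite and $\lambda_{\max}(H'_{1}(\theta))\geq\lambda_{\max}(H(\theta))\geq\lambda_{1}\geq-\delta$; in either case the value $-\delta=\lambda_{\max}(H'_{1}(\theta_{\ast}))$ attained at $\theta_{\ast}$ is globally minimal (the case $d_{\delta}(A,B)=0$ is trivial since then $N=0$). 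The same global (not local) reasoning is what makes $j=2$ work, where you correctly note $H'_{2}(\theta)=H(\theta)-d_{\delta}(A,B)\cos(\theta-\theta_{\ast})I$ is a sum of two functions each minimized at $\theta_{\ast}$. With that repair the proof is complete.
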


\subsection{Procedure to Locate a Nearest Definite Pair}\label{sec:procedure_nearest_definite}
Algorithm \ref{alg_dist} below computes the distance $d_{\delta}(A,B)$ defined as in (\ref{eq:distance_definiteness}) 
for a given real number $\delta>0$ by exploiting Theorem \ref{solving_distance}. It also computes 
optimal perturbations $\Delta A,\Delta B$, and determines an angle $\psi$ such that $(\widetilde{A},\widetilde{B})$ 
is a definite pair with positive definite $\widetilde{B}$ and $\lambda_{\min}(\widetilde{B}) = \max \{ \delta , \gamma(A,B) \}$, 
where $\widetilde{A}+\ri \widetilde{B}=e^{-\ri \psi}(A+\Delta A + \ri (B+\Delta B))$.

 \begin{algorithm}
 \begin{algorithmic}[1]
 	\REQUIRE{Hermitian matrices $A,B\in\C^{n\times n}$, a parameter $\delta>0$}
 	\ENSURE{$d_\delta(A,B)$, matrices $\Delta A, \Delta B$ s.t. $\gamma(A+\Delta A, B+\Delta B) = \max \{ \delta,$ $\gamma(A,B) \}$
	and $\| [\Delta A \;\; \Delta B] \|_2 = d_\delta(A,B)$, as well as $\psi \in [0, 2\pi)$ s.t.
	$(\widetilde{A},\widetilde{B}) = e^{-\ri \psi}(A+\Delta A,  B+\Delta B)$ is a definite pair with 
	$\lambda_{\min}(\widetilde{B}) = \max \{ \delta , \gamma(A,B) \}$.}
 	\STATE{$\theta_*\gets\arg\min_{\theta \in [0,2\pi)}\lambda_{\max}(A\cos\theta+B\sin\theta)$} \label{eigopt_problem}
 	\STATE{$H(\theta_*)\gets A\cos\theta_*+B\sin\theta_*$}
 	\STATE{$Q$diag$(\lambda_i)Q^*\gets$ spectral decomposition of $H(\theta_*)$ with $\lambda_n\leq\dots\leq\lambda_1$}
 	\STATE{$d_{\delta}(A,B)\gets\max\{\delta+\lambda_1, 0\}$}
 	\STATE{$\Delta A\gets\cos\theta_\ast Q\text{diag}(\min\{-\delta-\lambda_i,0\})Q^\ast$ \\
 			$\Delta B\gets\sin\theta_\ast Q\text{diag}(\min\{-\delta-\lambda_i,0\})Q^\ast$}
    	\STATE{$\psi\gets \theta_\ast + \pi/2$ is such that $(\widetilde{A},\widetilde{B})$ is a definite pair with positive definite 
    $\widetilde{B}$ and $\lambda_{\min}(\widetilde{B}) = \max \{ \delta , \gamma(A,B) \}$, where 
    $\widetilde{A}+\ri \widetilde{B}=e^{-i\psi}(A+\Delta A + \ri (B+\Delta B))$} \label{calc_angle}
 \end{algorithmic}
 \caption{Distance to a Nearest Definite Pair}
 \label{alg_dist}
\end{algorithm}

We deduce that the matrix $\widetilde{B}$ is positive definite with $\lambda_{\min}(\widetilde{B}) = \delta$
for the particular choice of $\psi$ in line \ref{calc_angle} by employing Theorems \ref{solving_distance}
and \ref{theta_existence}. In particular, by part (iii) of Theorem \ref{solving_distance}, the inner numerical
radius $\zeta(A+\Delta A, B+\Delta B)$ is attained at $\zeta(A+\Delta A, B+\Delta B) e^{\ri(\theta_\ast + \pi)}$.
Hence, it follows from part (iii) of Theorem \ref{theta_existence} that, for the angle 
\[
	\psi	\; := \;	\theta_\ast + \pi - \pi/2		\; = \;		\theta_\ast + \pi/2,
\]
the matrices $\widetilde{A}, \widetilde{B}$ defined by $\widetilde{A} + \ri \widetilde{B} = e^{-i\psi}(A+\Delta A + \ri (B+\Delta B))$
satisfy $\lambda_{\min}(\widetilde{B}) = \gamma(A+\Delta A, B+\Delta B) = \max\{ \delta , \gamma(A,B) \}$.

Note that if $(A,B)$ is definite with $\gamma(A,B) \geq \delta$, then Algorithm \ref{alg_dist} returns
$d_\delta(A,B) = 0$ and $\Delta A = \Delta B = 0$. But $\psi$ in line \ref{calc_angle} in this case
satisfies $\widetilde{A} + \ri \widetilde{B} = e^{-\ri \psi} (A + \ri B)$ with $\lambda_{\min}(\widetilde{B}) = \gamma(A,B)$.

It is argued in \cite{CHENG1999} that the most challenging part of Algorithm \ref{alg_dist} is the solution of 
the optimization problem $\min_{\theta \in [0,2\pi)}\lambda_{\max}(H(\theta))$ globally in line \ref{eigopt_problem}. 
In the next section we present two algorithms to solve these non-convex eigenvalue optimization problems.

\section{Computation of Inner Numerical Radius}\label{small_scale_algorithms}

\subsection{Level-Set Algorithm}\label{sec:level_set_method}
The problem at our hand can be expressed as
\[
	\zeta(C) \; = \; \left| \min_{\theta \in [0,2\pi)} f(\theta) \right|,
	\quad {\rm where} \;\; 
	f(\theta)=\lambda_{\max}(H(\theta)). 
\]
Our first algorithm to minimize $f(\theta)$ globally is analogous to the one to compute the numerical radius described
in \cite{mengi2005}, and is an extension of the Boyd-Balakrishnan algorithm to compute 
the $\mathcal{H}_{\infty}$-norm \cite{Boyd1990}.

Following the practice in \cite{mengi2005}, for a given estimate $\alpha > 0$ for the minimum of $f(\theta)$
over $\theta \in [0, 2\pi)$, the algorithm finds the $\alpha$-level set of $f(\theta)$, that is it determines 
the set of $\theta$ such that $f(\theta) = \alpha$. The open intervals $I_1, \dots, I_k$ satisfying 
\[
	f(\theta) < \alpha \;\;\;\; \forall \theta \in I_\ell
\] 
for $\ell = 1, \dots, k$ are inferred from this $\alpha$-level set. The refined estimate $\tilde{\alpha}$ is set equal 
to the minimum value attained by $f$ over the set of the midpoints of $I_1, \dots, I_k$. The key and challenging 
issue for the algorithm is determining the $\alpha$-level set of $f(\theta)$, and the following result \cite{mengi2005} 
is helpful for this purpose.
\begin{theorem}\label{Theorem_1}
	Let $\alpha > 0$ be a given real number, and
	\[
		R(\alpha)
				= 
		\left[\begin{array}{cc}
			2\alpha I & -C \\
				I &  \; 0
		\end{array} \right],
		\qquad
		S
			=	
		\left[\begin{array}{cc}
			C^\ast & 0 \\
			0 \; & I
		\end{array} \right].
	\]
	The pencil $L(\lambda) = R(\alpha)-\lambda S$ has $e^{i\theta}$ as an eigenvalue for some $\theta \in {\mathbb R}$
	or singular if and only if the Hermitian matrix $H(\theta) = (C e^{-\ri \theta} + C^\ast e^{\ri \theta})/2$ 
	has $\alpha$ as an eigenvalue.
\end{theorem}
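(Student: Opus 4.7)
The plan is to reduce the $2n\times 2n$ generalized eigenvalue condition for the pencil $L(\lambda)=R(\alpha)-\lambda S$ at $\lambda=e^{\ri\theta}$ directly to the $n\times n$ Hermitian eigenvalue condition $H(\theta)u=\alpha u$ by block-elimination. I would introduce a nonzero vector partitioned as $\begin{pmatrix} u \\ w \end{pmatrix}\in\C^{2n}$ and write out the two block rows of $L(e^{\ri\theta})\begin{pmatrix} u \\ w \end{pmatrix}=0$, which read
\[
   2\alpha u - Cw - e^{\ri\theta}C^{\ast} u \; = \; 0, \qquad u - e^{\ri\theta} w \; = \; 0.
\]
The second equation yields $w=e^{-\ri\theta}u$, and substituting this into the first gives $2\alpha u=(Ce^{-\ri\theta}+C^{\ast}e^{\ri\theta})u$, i.e., $H(\theta)u=\alpha u$. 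A short check that $u=0$ forces $w=0$ (and vice versa) via the second block row shows that the $2n$-dimensional kernel vector is nontrivial if and only if $u$ is a nonzero eigenvector of $H(\theta)$ for eigenvalue $\alpha$. Both directions of the regular-pencil statement then follow immediately.

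For the converse direction I would take a unit eigenvector $u$ of $H(\theta)$ for the eigenvalue $\alpha$ and simply define $w:=e^{-\ri\theta}u$; the two block equations are then satisfied by construction, exhibiting a nonzero kernel vector of $L(e^{\ri\theta})$. This is the easy half and requires no further work.

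To handle the ``or singular'' clause, I would recall that singularity of $L(\lambda)$ means $\det L(\lambda)\equiv 0$, so every $\lambda\in\C$ (including every $\lambda=e^{\ri\theta}$) admits a nontrivial kernel vector. Applying the block-elimination above then forces $\alpha$ to be an eigenvalue of $H(\theta)$ for every $\theta\in\R$; conversely, if $\alpha$ is an eigenvalue of $H(\theta)$ at some $\theta$, then $e^{\ri\theta}$ is a finite eigenvalue of $L(\lambda)$, so the pencil need only be regular-with-eigenvalue-$e^{\ri\theta}$ or singular, matching the statement.

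I do not expect any serious obstacle here: the entire argument is a $2\times 2$ block computation together with the standard observation about eigenvalues of singular matrix pencils. The only small care point is verifying that the derived kernel vector $(u,w)$ is genuinely nonzero on both blocks so that the equivalence holds as stated, and noting that the chosen parametrization $\lambda=e^{\ri\theta}$ corresponds exactly to the unit-circle spectrum of the pencil, which is what couples the complex eigenvalue problem for $L(\lambda)$ to the real parameter $\theta$ appearing in $H(\theta)$.
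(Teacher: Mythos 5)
Your proof is correct and is essentially the standard argument behind this result, which the paper itself does not prove but cites from \cite{mengi2005}: your block elimination $w=e^{-\ri\theta}u$ is exactly the linearization of the quadratic eigenvalue problem $\lambda^2C^{\ast}x-2\alpha\lambda x+Cx=0$ with $\lambda=e^{\ri\theta}$ that underlies the pencil $R(\alpha)-\lambda S$. The handling of the singular-pencil clause and the check that the kernel vector is nontrivial exactly when $u\neq 0$ are both sound.
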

We put Theorem \ref{Theorem_1} into use to 
find the $\alpha$-level set of $f(\theta)$ as follows. First we extract the unit generalized eigenvalues 
$e^{\ri \theta'}$ of the pencil $L(\lambda) = R(\alpha)-\lambda S$. The set of such $\theta'$ is a superset 
of the $\alpha$-level set of $f(\theta)$ by Theorem \ref{Theorem_1}. To determine the exact $\alpha$-level set, 
we compute the eigenvalues of $H(\theta')$ for each $\theta'$, and keep the ones for which $H(\theta')$ has 
$\alpha$ as indeed the largest eigenvalue.

Now we are ready to present the algorithm. At the $j$th iteration, given an estimate 
$r^{(j)}$ for $f_\ast := \min_{\theta \in [0,2\pi)}f(\theta)$ such that $r^{(j)} > f_\ast$
and a set of midpoints $\phi_1^{(j)}, \dots, \phi_{m_j}^{(j)}$ of the open intervals where $f(\theta)$ takes values smaller
than $r^{(j)}$, the minimum value that $f(\theta)$ attains over the midpoints $\phi_1^{(j)}, \dots, \phi_{m_j}^{(j)}$
is computed. We set $r^{(j+1)}$ equal to this minimum value. Observe that $r^{(j+1)}$ is a refined estimate, and 
still an upper bound for $f_\ast$. Subsequently, the open intervals $I_1^{(j+1)},\dots,I_{m_{j+1}}^{(j+1)}$ 
where $f(\theta) < r^{(j+1)}$ is determined by exploiting Theorem \ref{Theorem_1} followed 
by the maximum eigenvalue checks. Here we note that $I_{m_{j+1}}^{(j+1)}$ may be of the form 
$I_{m_{j+1}}^{(j+1)}=\left(\ell_{m_{j+1}}^{(j+1)},2\pi\right)$ $\cup\left[0,u_{m_{j+1}}^{(j+1)}\right)$ with $\ell_{m_{j+1}}^{(j+1)}>u_{m_{j+1}}^{(j+1)}$.
Finally, the midpoints $\theta_1^{(j+1)},\dots, \theta_{m_{j+1}}^{(j+1)}$ of $I_1^{(j+1)},\dots,I_{m_{j+1}}^{(j+1)}$ are computed.
Algorithm \ref{Inner_Num_Radius} formally describes this level-set method. 
\begin{algorithm}[t]
	\begin{algorithmic}[1]
		\REQUIRE{A matrix $C\in\C^{n\times n}$.}
		\ENSURE{The sequence $\{ r^{(j)} \}$.}
		
		\STATE $\phi^{(0)}_1  \gets  0$ and $m_0 \gets 1$
		\FOR{$j=0,1,\dots$}
		\STATE{$r^{(j+1)}\gets \min\{f(\phi^{(j)}_k)\; | \; k = 1,\dots, m_j\}$}
		\STATE Determine all of the intervals $I_k^{(j+1)} = \left(\ell_k^{(j+1)},u_k^{(j+1)} \right)$  or
		$I_k^{(j+1)} = \left(\ell_k^{(j+1)},u_k^{(j+1)} + 2\pi \right)$ with $\ell_k^{(j+1)},u_k^{(j+1)} \in [0,2\pi)$
		such that
		\[
			 f(\theta)<r^{(j+1)} \;\;\; \forall \theta\in I_k^{(j+1)} \quad {\rm and}	\quad   f(\ell_k^{(j+1)})=f(u_k^{(j+1)})=r^{(j+1)}
		\]
		for $k=1,\dots,m_{j+1}$.
		\STATE{Form the set $\left\{\phi_1^{(j+1)},\dots,\phi_{m_{j+1}}^{(j+1)}  \right\}$, where $\phi_k^{(j+1)}$ is the midpoint of the open 
		interval $I_k^{(j+1)}$} defined by
		\[
			\phi_k^{(j+1)}
					:=
						\begin{cases}
							\frac{\ell_k^{(j+1)}+u_k^{(j+1)} }{2}, & \text{if } \ell_k^{(j+1)}<u_k^{(j+1)}  \\
							\frac{\ell_k^{(j+1)}+u_k^{(j+1)} +2\pi}{2}\mod 2\pi,  & \text{otherwise}
						\end{cases}.
		\]
		\ENDFOR
	\end{algorithmic}
	\caption{Level-set based algorithm for inner numerical radius}
	\label{Inner_Num_Radius}
\end{algorithm}

The sequence $\left\{r^{(j)}\right\}$ generated by Algorithm \ref{Inner_Num_Radius} is guaranteed to converge
to the minimal value of $f(\theta)$ globally, whose absolute value is equal to $\zeta(C)$. 
This global convergence result can be deduced from the continuity of $f(\theta)$ and the fact that the length 
of the greatest open interval where $f(\theta)$ is smaller than the current estimate is at least halved 
at each iteration \cite[Theorem 4.2]{Boyd1990}. 
\begin{theorem}
	The sequence $\left\{r^{(j)}\right\}$ generated by Algorithm \ref{Inner_Num_Radius} converges to 
	$f_\ast := \min_{\theta \in [0,2\pi)}f(\theta)$.
\end{theorem}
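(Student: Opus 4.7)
The plan is to follow the standard Boyd-Balakrishnan style argument (as indicated by the citation), organized around three facts: monotonicity of $\{r^{(j)}\}$, a lower bound $r^{(j)} \ge f_\ast$, and halving of the maximum length of the sublevel intervals. Combined with continuity of $f$, these yield convergence.

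First, I would observe that the sequence is monotonically non-increasing and bounded below by $f_\ast$. Monotonicity follows because each midpoint $\phi_k^{(j)}$ lies in an open interval on which $f(\theta) < r^{(j)}$, hence $r^{(j+1)} = \min_k f(\phi_k^{(j)}) \le f(\phi_1^{(j)}) < r^{(j)}$. The lower bound $r^{(j)} \ge f_\ast$ is immediate, since $r^{(j+1)}$ is a value of $f$ at a real point. Consequently $\{r^{(j)}\}$ converges to some limit $r^\ast \ge f_\ast$.

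Next I would prove the halving property: if $L^{(j)} := \max_k |I_k^{(j)}|$, then $L^{(j+1)} \le L^{(j)}/2$. Fix any interval $I_k^{(j)}$ with midpoint $\phi_k^{(j)}$. By definition of $r^{(j+1)}$ as a minimum over midpoints, $f(\phi_k^{(j)}) \ge r^{(j+1)}$, so $\phi_k^{(j)}$ is not in the strict sublevel set $\{ f < r^{(j+1)} \}$. Hence each connected component of $\{ f < r^{(j+1)} \} \cap I_k^{(j)}$ is contained in one of the two half-intervals determined by the midpoint, giving the factor of two. Since every $I_\ell^{(j+1)}$ is contained in some $I_k^{(j)}$, halving follows, and therefore $L^{(j)} \to 0$.

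Finally I would close by contradiction. Suppose $r^\ast > f_\ast$. Pick $\theta_0 \in [0,2\pi)$ with $f(\theta_0) = f_\ast$, set $\varepsilon := (r^\ast - f_\ast)/2 > 0$, and use continuity of $\theta \mapsto \lambda_{\max}(H(\theta))$ to find $\delta > 0$ such that $f(\theta) < f_\ast + \varepsilon < r^\ast \le r^{(j)}$ for all $\theta$ in a neighborhood of $\theta_0$ of length $2\delta$. This neighborhood is connected and lies in $\{ f < r^{(j)} \}$, so it must be contained in a single interval $I_{k(j)}^{(j)}$, forcing $L^{(j)} \ge 2\delta$ for every $j$. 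This contradicts $L^{(j)} \to 0$, so $r^\ast = f_\ast$. The main (only) subtlety is handling the wrap-around interval of the form $(\ell, 2\pi) \cup [0, u)$ in the halving step, which is cosmetic once one treats $[0, 2\pi)$ as a circle of circumference $2\pi$ and measures interval length modulo $2\pi$.
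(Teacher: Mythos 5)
Your proposal is correct and follows exactly the route the paper indicates: the paper gives only a one-sentence sketch (continuity of $f$ plus the halving of the longest sublevel interval, citing the Boyd--Balakrishnan argument), and your write-up fills in precisely those details, including the correct treatment of the wrap-around interval on the circle. No gaps.
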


Algorithm \ref{Inner_Num_Radius} converges locally at a quadratic rate under the assumption  
that $\lambda_{\max}(H(\theta))$ is simple at one of its global minimizers. It can be proven for instance 
by following the ideas in the proof for the quadratic convergence result \cite[Theorem 5.1]{Boyd1990} 
regarding the level-set algorithm for ${\mathcal H}_\infty$-norm computation. A formal proof is included in 
Appendix \ref{sec:level-set_quad}.
\begin{theorem}\label{quad_convergence}
	Let $\theta_\ast :=\arg\min_{\theta \in [0,2\pi)}\lambda_{\max}(H(\theta))$, and suppose $\lambda_{\max}(H(\theta))$ 
	is simple at $\theta_\ast$. Then the sequence $\left\{r^{(j)}\right\}$ generated by Algorithm \ref{Inner_Num_Radius} 
	converges to $f_\ast := \min_{\theta \in [0,2\pi)}f(\theta)$ at a quadratic rate.
\end{theorem}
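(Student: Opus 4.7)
The plan is to adapt the standard quadratic-convergence argument for level-set methods (in the spirit of \cite[Theorem 5.1]{Boyd1990}) to this trigonometric eigenvalue setting. The structural input we exploit is that, since $\lambda_{\max}(H(\theta))$ is simple at $\theta_\ast$, standard analytic perturbation theory for Hermitian matrices guarantees that $f(\theta) := \lambda_{\max}(H(\theta))$ is real analytic on an open neighborhood of $\theta_\ast$. Combined with the first-order optimality $f'(\theta_\ast) = 0$ and the (generic, and implicit in any quadratic-rate claim) non-degenerate second-order condition $f''(\theta_\ast) > 0$, this yields a local Taylor expansion
\[
f(\theta) \; = \; f_\ast \; + \; \tfrac{1}{2} f''(\theta_\ast) (\theta-\theta_\ast)^2 \; + \; O\bigl((\theta-\theta_\ast)^3\bigr),
\]
valid on some interval $(\theta_\ast - \eta, \theta_\ast + \eta)$.

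The argument then proceeds in four short steps. First, the sequence $\{r^{(j)}\}$ is strictly decreasing, since each midpoint $\phi^{(j)}_k$ lies in an open interval on which $f < r^{(j)}$, and converges to $f_\ast$ by the already-established global convergence result; so for $j$ sufficiently large $r^{(j+1)}$ is arbitrarily close to $f_\ast$. Second, by continuity of $f$ one of the detected intervals, call it $I^{(j+1)}_{k_\ast} = (\ell,u)$, contains $\theta_\ast$, satisfies $f(\ell) = f(u) = r^{(j+1)}$, and has $\ell,u \in (\theta_\ast - \eta, \theta_\ast + \eta)$. Third, setting $\epsilon := r^{(j+1)} - f_\ast > 0$ and inverting the Taylor expansion at each endpoint produces
\[
u - \theta_\ast \; = \; \sqrt{2\epsilon/f''(\theta_\ast)} + O(\epsilon), \qquad \theta_\ast - \ell \; = \; \sqrt{2\epsilon/f''(\theta_\ast)} + O(\epsilon),
\]
so that the midpoint $\phi := (\ell+u)/2$ satisfies $|\phi - \theta_\ast| = O(\epsilon)$. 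Fourth, substituting $\phi$ back into the Taylor expansion yields $f(\phi) - f_\ast = O(\epsilon^2)$; since $r^{(j+2)}$ is the minimum of $f$ over \emph{all} midpoints $\phi^{(j+1)}_1,\dots,\phi^{(j+1)}_{m_{j+1}}$, we obtain
\[
r^{(j+2)} - f_\ast \; \leq \; f(\phi) - f_\ast \; \leq \; C \bigl(r^{(j+1)} - f_\ast\bigr)^2,
\]
for a constant $C$ depending on $f''(\theta_\ast)$ and a bound on the cubic remainder, which is the claimed quadratic rate.

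The main technical point I expect to have to handle carefully is the asymmetry of the interval $(\ell,u)$ about $\theta_\ast$. A naive estimate would give only $|\phi - \theta_\ast| = O(\sqrt{\epsilon})$ (roughly half the interval width), which would yield merely linear convergence. The saving feature is that when one solves $f(\ell) = f(u) = r^{(j+1)}$ via the Taylor expansion, the cubic remainder contributes the \emph{same} $O(\epsilon)$ shift to each of $\ell$ and $u$ relative to the symmetric prediction $\theta_\ast \mp \sqrt{2\epsilon/f''(\theta_\ast)}$; hence in forming the midpoint the leading $\pm\sqrt{\epsilon}$ terms cancel exactly while only the common cubic drift of size $O(\epsilon)$ survives. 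Making this cancellation quantitative via a second-order implicit-function-style inversion of $f(\theta) = r^{(j+1)}$ is the delicate computational part of the proof. A secondary, routine point is ensuring that once $j$ is large enough no spurious endpoint of a detected interval intervenes between $\ell$ and $u$; this is guaranteed by shrinking the neighborhood of $\theta_\ast$ inside which $f$ is analytic, and by using $r^{(j)}\to f_\ast$ together with $f > f_\ast$ outside any prescribed neighborhood of the (finite) set of global minimizers.
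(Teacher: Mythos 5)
Your proposal is correct and follows essentially the same route as the paper's proof in Appendix \ref{sec:level-set_quad}: Taylor-expand $f$ about the smooth minimizer, invert the expansion to locate the two endpoints of the sublevel interval containing $\theta_\ast$, observe that the leading $\sqrt{\epsilon}$ terms cancel in the midpoint so that $|\phi-\theta_\ast|=O(\epsilon)$, and resubstitute to obtain the $O(\epsilon^2)$ bound (the paper phrases the inversion via Puiseux expansions of the local inverse functions $\theta_\pm$, which is the same computation). The only place you assume more than the theorem states is the non-degeneracy condition $f''(\theta_\ast)>0$: the paper instead writes $f(\theta)=f_\ast+\beta(\theta-\theta_\ast)^{2k}+O((\theta-\theta_\ast)^{2k+1})$ with the first non-vanishing even order $2k$ (disposing of the locally constant case trivially), and the same midpoint cancellation then gives $|\phi-\theta_\ast|=O(\epsilon^{1/k})$ and hence $f(\phi)-f_\ast=O(\epsilon^{2})$ for every $k\geq 1$, so your extra hypothesis can be removed by the identical argument.
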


\noindent
\textbf{Remark.} 
In the case that $\lambda_{\max}(H(\theta))$ is not simple at $\theta_*$, we have no longer quadratic convergence. 
This is unlike the analogous algorithms to compute the numerical radius and $\mathcal{H}_{\infty}$-norm,
which converge quadratically regardless of the multiplicity of the eigenvalue at the maximizer of
the largest eigenvalue function involved. The reason is that the largest eigenvalue function at a maximizer
is guaranteed to be at least twice continuously differentiable. On the contrary, this differentiability property 
is not true at a minimizer of the largest eigenvalue function if the eigenvalue is not simple.

\subsection{Support Based Algorithm}\label{sec:support_algorithm}
The algorithm that we employ in this section for computing the inner numerical radius is 
borrowed from \cite{Mengi2014}. Throughout the section, we consider the general setting
of (\ref{eq:main_problem2}). The algorithm that we discuss here to solve (\ref{eq:main_problem2}) 
globally is based on the boundedness of the second derivatives of the objective eigenvalue function. 
It replaces the eigenvalue functions with piece-wise quadratic support functions that underestimate 
$\lambda_{\max}(\mathcal{A}(\omega))$ globally.

We first introduce formally the quadratic support functions, which are the main ingredients of the algorithm
and based on the analytical properties of $\lambda_{\max}(\mathcal{A}(\omega))$. The next result 
states the analytical properties that are relevant to the derivation \cite{Rellich1969, lancaster1964}. 
\begin{lemma}\label{analytical_prop}
   Let $\mathcal{A}(\omega):\R\rightarrow \C^{n\times n}$ be a Hermitian matrix-valued function 
   as in (\ref{eq:main_problem2}). The following hold:
   \begin{itemize}
   	\item [\textbf{(i)}] The eigenvalues $\tilde{\lambda}_1(\omega),\dots,\tilde{\lambda}_n(\omega)$ of 
	$\mathcal{A}(\omega)$ can be permuted in a way so that each of them is a real analytic function of $\omega$.
   	\item [\textbf{(ii)}] For given $\tilde{\omega},p\in\R$, letting 
	$\phi(\alpha):=\lambda_{\max}(\mathcal{A}(\tilde{\omega}+\alpha p))$, 
	the left-hand derivative $\phi_-'(\alpha)$ and the right-hand derivative $\phi_+'(\alpha)$ of $\phi(\alpha)$ 
	exist everywhere, furthermore they satisfy $\phi_+'(\alpha) \geq \phi_-'(\alpha)$ at all $\alpha\in\R$.
   	\item [\textbf{(iii)}] If $\lambda_{\max}(\mathcal{A}(\omega))$ is simple, then it is twice continuously 
	differentiable. At all such $\omega$, we have
   	\[
   		\frac{d\lambda_{\max}(\mathcal{A}(\omega))}{d\omega} \; = \; v^*\frac{d\mathcal{A}(\omega)}{d\omega}v
   	\]
   	and
   	\begin{equation*}
		\begin{split}
   		\frac{d^2\lambda_{\max}(\mathcal{A}(\omega))}{d\omega^2}
			\; = \;
		v^*\frac{d^2\mathcal{A}(\omega)}{d\omega^2}v \; + 	\hskip 30ex \\
		\hskip 20ex
		2\sum_{k=2}^{n}
		\frac{1}{\lambda_{\max}(\mathcal{A}(\omega))-\lambda_k(\mathcal{A}(\omega))}
								\abs{v_k^*\frac{d\mathcal{A}(\omega)}{d\omega}v}^2
		\end{split}
   	\end{equation*}
   	where $\lambda_{k}(\mathcal{A}(\omega))$ denotes the $k$th largest eigenvalue of ${\mathcal A}(\omega)$, and
	$v$, $v_k$ are unit eigenvectors corresponding to 
	$\lambda_{\max}(\mathcal{A}(\omega))$, $\lambda_{k}(\mathcal{A}(\omega))$, respectively,
	for $k=2\dots,n$.
    \end{itemize}
\end{lemma}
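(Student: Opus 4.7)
The plan is to handle the three parts in order, invoking classical analytic perturbation theory for the structural statements and then carrying out elementary eigenvalue calculus for the derivative formulas. For part (i), the matrix-valued function $\mathcal{A}(\omega) = \sum_{j=1}^\kappa f_j(\omega) A_j$ is Hermitian and real analytic on $\underline{\Omega}$, since each $f_j$ is real analytic by hypothesis and the $A_j$ are constant. Rellich's theorem on analytic perturbation of Hermitian matrices then supplies real-analytic labelings $\tilde\lambda_1(\omega), \dots, \tilde\lambda_n(\omega)$, together with a real-analytic orthonormal eigenbasis, which is all that part (i) asserts.

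For part (ii), I would fix $\tilde\omega, p \in \R$ and apply (i) to the composition $\alpha \mapsto \mathcal{A}(\tilde\omega + \alpha p)$, which is again real analytic, producing real-analytic branches $\mu_1(\alpha), \dots, \mu_n(\alpha)$ with $\phi(\alpha) = \max_i \mu_i(\alpha)$. At any given $\alpha_0$, let $S := \{ i : \mu_i(\alpha_0) = \phi(\alpha_0) \}$. By continuity, the inactive branches stay strictly below $\phi(\alpha_0)$ on some neighborhood of $\alpha_0$, so locally $\phi = \max_{i \in S} \mu_i$. A first-order Taylor expansion of each active branch then yields
\[
\phi_+'(\alpha_0) \; = \; \max_{i \in S} \mu_i'(\alpha_0), \qquad \phi_-'(\alpha_0) \; = \; \min_{i \in S} \mu_i'(\alpha_0),
\]
from which $\phi_+'(\alpha_0) \geq \phi_-'(\alpha_0)$ is immediate. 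The mildly subtle point is verifying that the $o(\alpha - \alpha_0)$ remainders do not influence which active branch realizes the max on each side, but this follows because the leading linear term dominates for $\alpha$ sufficiently close to $\alpha_0$.

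For part (iii), simplicity of $\lambda_{\max}(\mathcal{A}(\omega))$ means exactly one analytic branch attains the maximum in a neighborhood, so $\lambda_{\max}(\mathcal{A}(\cdot))$ agrees locally with that branch and is in particular real analytic, hence twice continuously differentiable. Let $v(\omega)$ denote the accompanying real-analytic unit eigenvector provided by Rellich. Differentiating $\mathcal{A}(\omega) v(\omega) = \lambda_{\max}(\mathcal{A}(\omega)) v(\omega)$ and left-multiplying by $v(\omega)^*$, using $v^*v=1$ together with $v^* \mathcal{A}(\omega) = \lambda_{\max}(\mathcal{A}(\omega)) v^*$, delivers the first-derivative formula. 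For the second derivative, I would differentiate once more and expand $v'(\omega) = \sum_{k=2}^n c_k v_k(\omega)$ in the orthogonal analytic eigenbasis, after fixing the phase of $v$ so that $v^* v' = 0$; the coefficients $c_k = v_k^* \mathcal{A}'(\omega) v(\omega) / (\lambda_{\max}(\mathcal{A}(\omega)) - \lambda_k(\mathcal{A}(\omega)))$ follow by projecting the derivative of the eigenvalue equation onto $v_k$. Substituting back and using the Hermitian symmetry of $\mathcal{A}'(\omega)$ to combine the two cross terms $(v')^* \mathcal{A}' v$ and $v^* \mathcal{A}' v'$ into a single real sum of squared moduli yields the stated expression. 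The main obstacle is notational rather than computational: Rellich's theorem must be invoked cleanly, and the eigenvector phase fixed so that $v^* v' = 0$ is actually attained; once those are secured, the derivative calculations are routine.
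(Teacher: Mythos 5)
Your argument is correct and is the standard one: the paper itself offers no proof of this lemma, simply citing Rellich's analytic perturbation theory and Lancaster's derivative formulas, and your sketch reproduces exactly the arguments found in those references (analytic branches from Rellich for a one-parameter Hermitian family, one-sided derivatives of a finite max of analytic functions giving $\phi_+'=\max_{i\in S}\mu_i'$ and $\phi_-'=\min_{i\in S}\mu_i'$, and the usual differentiation of the eigenvalue equation with the phase fixed so that $v^*v'=0$). One cosmetic remark: the higher-order remainders can in fact change \emph{which} tied branch attains the max near $\alpha_0$, but as you implicitly use, this does not affect the limit of the difference quotient, so the conclusion stands.
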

In the next result we present the quadratic support functions. 
The proof of the fact that these functions are global under-estimators for the largest eigenvalue functions
follows from part (ii) of Lemma \ref{analytical_prop}. We omit the proof because of its similarity to
the proof of \cite[Theorem 2.2]{Mengi2017}, the analogous result that constructs upper support functions 
for smallest eigenvalue functions.
%
\begin{theorem}[Quadratic Lower Support Functions]
	Suppose $\lambda_{\max}(\mathcal{A}(\omega))$ is simple at $\omega^{(k)}\in\Omega$.
	Additionally, suppose $\gamma$ satisfies $\lambda_{\max}''(\mathcal{A}(\omega))\geq\gamma$ 
	for all $\omega\in\Omega$ such that $\lambda_{\max}(\mathcal{A}(\omega))$ is simple. Then, we have
	\begin{equation}\label{quad_model}
		\lambda_{\max}(\mathcal{A}(\omega)) \;\; \geq \;\; q_k(\omega)	 := 	\lambda_k + 
			\lambda'_k (\omega - \omega^{(k)}) + \frac{\gamma}{2}(\omega - \omega^{(k)})^2
			\;\; \forall \omega \in \Omega
	\end{equation}
	where $\lambda_k := \lambda_{\max} ({\mathcal A}(\omega^{(k)}))$
	$\lambda'_k := \lambda_{\max} ({\mathcal A}(\omega^{(k)})$.
\end{theorem}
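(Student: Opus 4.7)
The plan is to set $g(\omega) := \lambda_{\max}(\mathcal{A}(\omega)) - q_k(\omega)$ and prove $g \geq 0$ on $\Omega$ by showing that the right-hand derivative $g'_+$ is non-decreasing across $\Omega$, together with the matching conditions $g(\omega^{(k)}) = 0$ and $g'(\omega^{(k)}) = 0$ at the anchor point. The matching conditions are immediate: because $\lambda_{\max}(\mathcal{A}(\omega))$ is simple at $\omega^{(k)}$, part (iii) of Lemma \ref{analytical_prop} guarantees it is $C^2$ in a neighborhood of $\omega^{(k)}$ with value $\lambda_k$ and derivative $\lambda_k'$; since $q_k(\omega^{(k)}) = \lambda_k$ and $q_k'(\omega^{(k)}) = \lambda_k'$ by direct computation, both $g(\omega^{(k)})$ and $g'(\omega^{(k)})$ vanish.

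To establish monotonicity of $g'_+$, I would first use part (i) of Lemma \ref{analytical_prop} to split $\Omega$ into open sub-intervals of simplicity separated by a discrete ``kink set'' $\Gamma$: the branches $\tilde\lambda_1,\dots,\tilde\lambda_n$ are real-analytic, so any two of them either coincide identically on $\Omega$ (a harmless case in which $\lambda_{\max}$ remains smooth) or agree on a set with no accumulation in $\Omega$. On every maximal open sub-interval of simplicity, the hypothesis $\lambda_{\max}''(\mathcal{A}(\omega)) \geq \gamma$ gives $g''(\omega) \geq 0$, so $g'$ is non-decreasing there. At every kink $\tilde\omega \in \Gamma$, Lemma \ref{analytical_prop}(ii) applied with $p=1$ yields $\lambda_{\max,+}'(\tilde\omega) \geq \lambda_{\max,-}'(\tilde\omega)$, and since $q_k$ is smooth the same upward-jump inequality descends to $g'_+(\tilde\omega) \geq g'_-(\tilde\omega)$. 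Splicing ``non-decreasing between kinks'' with ``only upward jumps at kinks'' yields $g'_+$ monotone non-decreasing on all of $\Omega$.

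Combined with $g'_+(\omega^{(k)}) = 0$, this monotonicity forces $g'_+(\omega) \geq 0$ for $\omega \geq \omega^{(k)}$ and $g'_-(\omega) \leq 0$ for $\omega \leq \omega^{(k)}$, so $g$ is non-decreasing on $\Omega \cap [\omega^{(k)}, \infty)$ and non-increasing on $\Omega \cap (-\infty, \omega^{(k)}]$. Since $g(\omega^{(k)}) = 0$, we obtain $g(\omega) \geq 0$ on $\Omega$, which is exactly the claimed lower support inequality. The main (and essentially only) delicate step is the splicing argument: it relies on the discreteness of $\Gamma$ inherited from the real-analyticity in Lemma \ref{analytical_prop}(i); without this one could imagine pathological accumulations of kinks that would defeat the naive piece-by-piece monotonicity reasoning and force a more careful integration-of-one-sided-derivatives argument.
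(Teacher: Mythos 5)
Your proof is correct. The paper itself omits the proof of this theorem, deferring to the analogous upper-support-function result in \cite[Theorem 2.2]{Mengi2017}; the argument used there is essentially the one you give, namely that $\lambda_{\max}({\mathcal A}(\omega))-\frac{\gamma}{2}(\omega-\omega^{(k)})^2$ has a non-decreasing one-sided derivative (second-derivative bound on the finitely many analytic pieces, plus the upward jump $\phi_+'\geq\phi_-'$ from Lemma \ref{analytical_prop}(ii) at the non-smooth points), and hence lies above its tangent at $\omega^{(k)}$. Your handling of the splicing via the discreteness of the coalescence set is the right way to make this rigorous. The only caveat worth recording is one you inherit from the theorem's hypothesis rather than introduce yourself: on a sub-interval where two top branches coincide identically, $\lambda_{\max}$ is smooth but nowhere simple, so the assumed bound $\lambda_{\max}''\geq\gamma$ does not formally apply there; this degenerate case is implicitly excluded by the paper as well.
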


We call $q_k(\omega)$ as in (\ref{quad_model}) the quadratic support function about $\omega^{(k)}$.
Such a quadratic support function is defined in terms of a lower bound $\gamma$ for the second
derivatives of the eigenvalue function. This lower bound can occasionally be obtained from the
expression
\begin{equation*}\label{sec_der}
	\frac{d^2\lambda_{\max}(\mathcal{A}(\omega))}{d\omega^2}	=
				v^\ast\frac{d^2\mathcal{A}(\omega)}{d\omega^2}v + 
				2\sum_{k=2}^{n}
					\frac{1}{\lambda_{\max}(\mathcal{A}(\omega))-\lambda_k(\mathcal{A}(\omega))}
					\abs{v_k^*\frac{d\mathcal{A}(\omega)}{d\omega}v}^2.
\end{equation*}
Since the summation term on the right-hand side is non-negative, we must have
\[
	\frac{d^2\lambda_{\max}(\mathcal{A}(\omega))}{d\omega^2}	
			\; \geq \; 
	v^\ast \frac{d^2\mathcal{A}(\omega)}{d\omega^2}v
			\; \geq \;
	-\norm{\frac{d^2\mathcal{A}(\omega)}{d\omega^2}}_2.
\]
Hence $\gamma= - \max_{\omega \in \Omega} \norm{d^2\mathcal{A}(\omega)/d\omega^2}_2$ is a theoretically 
sound choice. 

In the case of computing the inner numerical radius of $A+\ri B$ for a given Hermitian pair $(A,B)$, 
we minimize $\lambda_{\max}(H(\theta))$ where $H(\theta) = A \cos \theta + B \sin \theta$
over $\theta \in [0,2\pi]$. Since
\begin{equation*}
	\begin{split}
	\left|
		\frac{d^2\lambda_{\max}(H(\theta))}{d\theta^2}
	\right|
			\;  & =  \;
	\left|
		v^\ast (A \cos \theta + B \sin \theta) v
	\right|	\\
			\; & \geq \;
	-\norm{A \cos\theta + B \sin\theta}_2
			\; \geq \;
		- \norm{A}_2 - \norm{B}_2
	\end{split}
\end{equation*}
with $v$ representing a unit eigenvector corresponding to the largest eigenvalue of $H(\theta)$,
we can set $\gamma=-\norm{A}_2-\norm{B}_2$.

Finally we present the algorithm based on these support functions. For a given point $\omega^{(0)}$, 
initially the quadratic support function $q_0(\omega)$ about $\omega^{(0)}$ is constructed. Then the 
algorithm generates a sequence $\left\{\omega^{(k)}\right\}$ consisting of estimates for a global minimizer of 
$\lambda_{\max}(\mathcal{A}(\omega))$, a sequence $\left\{ \ell^{(k)} \right\}$ consisting of lower bounds
for the globally smallest value of $\lambda_{\max}({\mathcal A}(\omega))$ and a sequence 
$\left\{\bar{q}_k(\omega)\right\}$ consisting of piece-wise 
quadratic model functions for $\lambda_{\max}({\mathcal A}(\omega))$. At the $k$th iteration, 
the point $\omega^{(k+1)}$ is set equal to a global minimizer of the piece-wise quadratic model function
$\overline{q}_k(\omega) := \max_{j=0,\dots,k}q_j(\omega)$. This is followed by the construction of the quadratic 
support function $q_{k+1}(\omega)$ about $\omega^{(k+1)}$, and inclusion of $q_{k+1}(\omega)$
in the piece-wise quadratic model function. A formal description is given in Algorithm \ref{eigopt}. 
\begin{algorithm}
	\begin{algorithmic}[1]
		\REQUIRE{The matrix-valued function ${\mathcal A}(\omega)$, a closed interval $\Omega\subset\R$, 
		the lower bound $\gamma$ on $\lambda_{\max}''(\mathcal{A}(\omega))$ for all $\omega \in \Omega$
		such that $\lambda_{\max}({\mathcal A}(\omega))$ is simple. }
		\ENSURE{The sequences $\left\{\omega^{(k)}\right\}$ and $\{ \ell^{(k)} \}$.}
		\STATE{$\omega^{(0)}\gets$ an initial point in $\Omega$}
		\STATE{$q_0(\omega)\gets$ the initial quadratic support function about $\omega^{(0)}$}
		\FOR{$k=0,1,\dots$}
		\STATE{$\bar{q}_k(\omega)\gets\max\left\{q_j(\omega)\;|\; j=0,\dots,k\right\}$}
		\STATE{$\omega^{(k+1)}\gets\arg\min_{\omega\in\Omega}\bar{q}_k(\omega)\quad$ and
										$\quad \ell^{(k+1)} \gets \bar{q}_k(\omega^{(k+1)})$}
		\STATE{$q_{k+1}(\omega)\gets$ the quadratic support function about $\omega^{(k+1)}$}
		\ENDFOR
	\end{algorithmic}
	\caption{Support Based Algorithm}
	\label{eigopt}
\end{algorithm}

The next result draws global convergence conclusions regarding the sequences $\{ \omega^{(k)} \}$ and $\{ \ell^{(k)} \}$ 
by Algorithm \ref{eigopt}. This has been proven in \cite[Theorem 8.1]{Mengi2014}.
\begin{theorem}\label{thm:eigopt_global_convergence}
Every convergent subsequence of the sequence $\{ \omega^{(k)} \}$ by Algorithm \ref{eigopt} converges
to a global minimizer of $\lambda_{\max}({\mathcal A}(\omega))$ over $\omega \in \Omega$. Furthermore,
\[
	\lim_{k\rightarrow \infty} \ell^{(k)} \;\; = \;\; \min_{\omega \in \Omega} \: \lambda_{\max} ({\mathcal A}(\omega)).
\]
\end{theorem}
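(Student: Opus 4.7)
The plan is to exploit that each $q_j$ is a global lower bound for $\lambda_{\max}(\mathcal{A}(\omega))$ that matches the objective at the interpolation point $\omega^{(j)}$, and then pass to the limit along a convergent subsequence of $\{\omega^{(k)}\}$. First, I would observe that $\bar{q}_k(\omega) \leq \bar{q}_{k+1}(\omega) \leq \lambda_{\max}(\mathcal{A}(\omega))$ for every $\omega \in \Omega$, so denoting $f_\ast := \min_{\omega \in \Omega} \lambda_{\max}(\mathcal{A}(\omega))$ we obtain
\[
\ell^{(k+1)} \;=\; \min_{\omega \in \Omega} \bar{q}_k(\omega) \;\leq\; \bar{q}_k(\omega_\ast) \;\leq\; \lambda_{\max}(\mathcal{A}(\omega_\ast)) \;=\; f_\ast
\]
for any global minimizer $\omega_\ast$, and also $\ell^{(k+1)} \leq \ell^{(k+2)}$ since the piece-wise model only grows with $k$. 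Thus $\{\ell^{(k)}\}$ is monotone and bounded above by $f_\ast$, hence convergent to some $\ell_\infty \leq f_\ast$.

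Next, let $\{\omega^{(k_i)}\}$ be a convergent subsequence with limit $\omega_\infty \in \Omega$ (which exists because $\Omega$ is a closed and bounded interval). For any $i$ and any $k > k_i$, using that $\bar{q}_{k-1} \geq q_{k_i}$ and that $\omega^{(k)}$ minimizes $\bar{q}_{k-1}$, one obtains
\[
\ell^{(k)} \;=\; \bar{q}_{k-1}(\omega^{(k)}) \;\geq\; q_{k_i}(\omega^{(k)}) \;=\; \lambda_{k_i} + \lambda_{k_i}'(\omega^{(k)} - \omega^{(k_i)}) + \tfrac{\gamma}{2}(\omega^{(k)} - \omega^{(k_i)})^2.
\]
Taking $k = k_{i+1}$ and letting $i \to \infty$, both $\omega^{(k_i)}$ and $\omega^{(k_{i+1})}$ tend to $\omega_\infty$, so $\omega^{(k_{i+1})} - \omega^{(k_i)} \to 0$. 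By continuity of $\lambda_{\max}(\mathcal{A}(\cdot))$, $\lambda_{k_i} \to \lambda_{\max}(\mathcal{A}(\omega_\infty))$; moreover $\lambda'_{k_i} = v_{k_i}^\ast \mathcal{A}'(\omega^{(k_i)}) v_{k_i}$ is bounded uniformly in $i$ because $\|\mathcal{A}'(\cdot)\|_2$ is continuous on the compact set $\Omega$. Consequently the right-hand side converges to $\lambda_{\max}(\mathcal{A}(\omega_\infty))$, yielding
\[
\ell_\infty \;\geq\; \lambda_{\max}(\mathcal{A}(\omega_\infty)) \;\geq\; f_\ast.
\]
Combined with $\ell_\infty \leq f_\ast$, this forces $\ell_\infty = f_\ast$ and $\lambda_{\max}(\mathcal{A}(\omega_\infty)) = f_\ast$, so $\omega_\infty$ is a global minimizer and $\ell^{(k)} \to f_\ast$.

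The only delicate point I foresee is the standing simplicity requirement for the construction of $q_j$: at a limit $\omega_\infty$ where $\lambda_{\max}(\mathcal{A}(\omega_\infty))$ is multiple, the iterates $\omega^{(k_i)}$ might approach from points at which $\lambda_{\max}(\mathcal{A}(\omega^{(k_i)}))$ is simple but with $\lambda'_{k_i}$ potentially jumping. This is handled by the uniform bound $|\lambda'_{k_i}| \leq \|\mathcal{A}'(\omega^{(k_i)})\|_2 \leq \max_{\omega \in \Omega} \|\mathcal{A}'(\omega)\|_2$, so no extra regularity of the eigenvalue function at $\omega_\infty$ is needed to complete the argument; the bounded-derivative term times the vanishing increment $(\omega^{(k_{i+1})} - \omega^{(k_i)})$ still tends to zero. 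Genericity issues would arise only if $\omega^{(k_{i+1})}$ coincides with a non-simple point, which can be avoided by an arbitrarily small perturbation in the algorithm, as noted in \cite{Mengi2014}.
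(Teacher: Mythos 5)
Your argument is correct and is essentially the standard sandwich argument that the paper defers to \cite[Theorem 8.1]{Mengi2014}: monotonicity of $\{\ell^{(k)}\}$ bounded above by $f_\ast$ from the global underestimation property, combined with the interpolation $q_{k_i}(\omega^{(k_i)})=\lambda_{\max}({\mathcal A}(\omega^{(k_i)}))$ and the uniform bound on $\lambda'_{k_i}$ to squeeze $\ell_\infty$ from below along a convergent subsequence. Your closing remark correctly isolates the only delicate point (iterates landing exactly on non-simple points), which the cited reference handles the same way.
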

\noindent
If the global minimizer $\omega_\ast$ of $\lambda_{\max}({\mathcal A}(\omega))$ over $\omega \in \Omega$
is unique, then the convergence of the sequence $\{ \omega^{(k)} \}$ itself to $\omega_\ast$ can be asserted.
This is formally presented next.
\begin{theorem}\label{convergence_omega}
	Suppose that $\lambda_{\max}(\mathcal{A}(\omega))$ has a unique global minimizer over all 
	$\omega \in \Omega$, say at  $\omega_\ast$. Then the sequence $\left\{\omega^{(k)}\right\}$ 
	by Algorithm \ref{eigopt} converges to $\omega_\ast$.
\end{theorem}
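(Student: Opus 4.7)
The plan is to combine the global-minimizer-only subsequential convergence guaranteed by Theorem \ref{thm:eigopt_global_convergence} with the uniqueness assumption, using the standard compactness argument that "every subsequence has a sub-subsequence converging to the same limit $L$" implies the full sequence converges to $L$. The compactness ingredient is available because the input specification of Algorithm \ref{eigopt} requires $\Omega$ to be a closed interval in $\mathbb{R}$, so in particular $\Omega$ is compact and the iterates $\{\omega^{(k)}\}$ form a bounded sequence.

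Concretely, I would argue as follows. First, let $\{\omega^{(k_j)}\}$ be an arbitrary subsequence of $\{\omega^{(k)}\}$. Since it lies in the compact set $\Omega$, by the Bolzano--Weierstrass theorem it admits a further convergent sub-subsequence $\{\omega^{(k_{j_i})}\}$ with some limit $\widehat{\omega} \in \Omega$. By Theorem \ref{thm:eigopt_global_convergence}, $\widehat{\omega}$ must be a global minimizer of $\lambda_{\max}(\mathcal{A}(\omega))$ over $\Omega$. The uniqueness hypothesis then forces $\widehat{\omega} = \omega_\ast$. Thus every subsequence of $\{\omega^{(k)}\}$ has a further sub-subsequence converging to the same point $\omega_\ast$.

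Finally, to conclude that the full sequence converges to $\omega_\ast$, I would use the standard topological fact: if $\{\omega^{(k)}\}$ did not converge to $\omega_\ast$, then there would exist some $\varepsilon > 0$ and a subsequence $\{\omega^{(k_j)}\}$ with $|\omega^{(k_j)} - \omega_\ast| \geq \varepsilon$ for all $j$. Applying the previous step to this subsequence would produce a sub-subsequence converging to $\omega_\ast$, contradicting the bound $|\omega^{(k_{j_i})} - \omega_\ast| \geq \varepsilon$. Hence $\omega^{(k)} \to \omega_\ast$ as $k \to \infty$.

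There is essentially no technical obstacle here: everything rests on Theorem \ref{thm:eigopt_global_convergence} together with compactness of $\Omega$ and uniqueness of the minimizer. The only minor point to double-check is that $\Omega$ being a closed interval is indeed built into the setting of Algorithm \ref{eigopt} (it is, as part of the input specification), so that Bolzano--Weierstrass applies to $\{\omega^{(k)}\}$ directly; otherwise one would need to first verify boundedness from the structure of the algorithm.
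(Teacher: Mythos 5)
Your proposal is correct and follows essentially the same route as the paper's proof: both extract an $\varepsilon$-separated subsequence under the contradiction hypothesis, apply Bolzano--Weierstrass to get a convergent sub-subsequence, and invoke Theorem \ref{thm:eigopt_global_convergence} together with uniqueness of $\omega_\ast$ to reach a contradiction. No gaps.
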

\begin{proof}
	Assume for a contradiction that $\left\{\omega^{(k)}\right\}$ does not converge to $\omega_*$. Equivalently 
	there exists $\epsilon_0>0$ such that for all $K\in\N$ and for some $k\in\N$ such that $k>K$, 
	we have $\abs{\omega^{(k)}-\omega_*}\geq\epsilon_0$. Hence, we can construct a subsequence 
	$\left\{\omega^{(\ell_{k})}\right\}$ of $\left\{\omega^{(k)}\right\}$ such that for all $k$ the inequality
	\begin{equation}\label{ineq_convergence}
		\abs{\omega^{(\ell_{k})}-\omega_*}\geq\epsilon_0
	\end{equation}
	is satisfied. But this subsequence $\left\{\omega^{(\ell_{k})}\right\}$ is bounded, so it has a
	subsequence $\left\{\omega^{(n_{\ell_{k}})}\right\}$ that is convergent. 
	Now the uniqueness of $\omega_\ast$ combined with Theorem \ref{thm:eigopt_global_convergence}
	leads us to $\lim_{k\rightarrow \infty} \omega^{(n_{\ell_k})} = \omega_\ast$, which contradicts
	(\ref{ineq_convergence}).
\end{proof}

\subsection{Rate-of-Convergence of the Support Based Algorithm}\label{sec:support_rate_convergence}
In numerous numerical experiments, we observe that Algorithm \ref{eigopt} converges at a linear rate 
in the smooth case if $\lambda_{\max}({\mathcal A}(\omega))$ is simple at its global minimizers, but 
a formal proof of this observation is open at the moment.

Here we turn out attention to the non-smooth case, in particular provide a formal rate-of-convergence 
analysis in this case. Remarkably the presence of non-smoothness
accelerates Algorithm \ref{eigopt}. This is quite a contrast to the level-set method of 
Section \ref{sec:level_set_method}, whose quadratic convergence in the smooth case is hindered and limited 
to a linear convergence by the existence of non-smoothness. 

Throughout the rest we assume $\lambda_{\max}({\mathcal A}(\omega))$ has a unique global minimizer, 
say at $\omega_\ast$, and that $\lambda_{\max}({\mathcal A}(\omega_\ast))$ is multiple with
$0 \in {\rm Int} \: \partial \lambda_{\max}({\mathcal A}(\omega_\ast))$ where 
$\partial \lambda_{\max}({\mathcal A}(\omega_\ast))$ is the generalized Clarke
derivative defined as in (\ref{eq:gen_derivative}). (Recall that the condition 
$0 \in {\rm Int} \: \partial \lambda_{\max}({\mathcal A}(\omega_\ast))$ holds generically.)
The eigenvalue function $\lambda_{\max}(\mathcal{A}(\omega))$ is continuous 
and piecewise real analytic at $\omega_\ast$. Indeed there has to be two
real analytic eigenvalue functions $\widetilde{\lambda}_1({\mathcal A}(\omega))$, $\widetilde{\lambda}_2({\mathcal A}(\omega))$ 
such that 
$\widetilde{\lambda}_1({\mathcal A}(\omega_\ast)) = \widetilde{\lambda}_2({\mathcal A}(\omega_\ast))$
and 
$\lambda_{\max}({\mathcal A}(\omega))$ $= 
	\max \{ \widetilde{\lambda}_1({\mathcal A}(\omega)),  \widetilde{\lambda}_2({\mathcal A}(\omega)) \}$
in a neighborhood of $\omega_\ast$ \cite{Rellich1969}. Moreover, without loss of generality, we can assume there 
exists an open interval ${\mathcal I}$ containing $\omega_\ast$ such that
\begin{equation*}
	\lambda_{\max}(\mathcal{A}(\omega))
		\;	=	\;
	\left\{
	\begin{split}
		\widetilde{\lambda}_1(\mathcal{A}(\omega))	&	\quad\quad \omega \in {\mathcal I}, \: \omega \geq \omega_\ast	\\
		\widetilde{\lambda}_2(\mathcal{A}(\omega))	&	\quad\quad \omega \in {\mathcal I}, \; \omega < \omega_\ast
	\end{split}
	\right.
\end{equation*}
and 
$\lambda'_{\ast,+} := \widetilde{\lambda}_1'(\mathcal{A}(\omega_\ast))>0$, 
$\lambda'_{\ast,-} := \widetilde{\lambda}_2'(\mathcal{A}(\omega_\ast))<0$.
Note that $\lambda'_{\ast,+}$ and $\lambda'_{\ast,-}$ correspond to the right-hand and left-hand derivatives, respectively,
of $\lambda_{\max}({\mathcal A}(\omega))$ at $\omega_\ast$. In what follows, we also use the notations
\begin{equation*}
	\begin{split}
	\lambda''_{\ast,+} := \widetilde{\lambda}_1''(\mathcal{A}(\omega_\ast)),	\;\;
	\lambda'''_{\ast,+} := \widetilde{\lambda}_1'''(\mathcal{A}(\omega_\ast)),		\\
	\lambda''_{\ast,-} := \widetilde{\lambda}_2''(\mathcal{A}(\omega_\ast)),	\;\;
	\lambda'''_{\ast,-} := \widetilde{\lambda}_2'''(\mathcal{A}(\omega_\ast)),
	\end{split}
\end{equation*}
which correspond to higher order one-sided derivatives of $\lambda_{\max}({\mathcal A}(\omega))$
at $\omega_\ast$, as well as $\lambda_\ast := \lambda_{\max}({\mathcal A}(\omega_\ast))$.
Additionally, the short-hands $\lambda_k, \lambda'_k, \lambda^{''}_k$ represent 
$\lambda_{\max}({\mathcal A}(\omega^{(k)}))$, $\lambda'_{\max}({\mathcal A}(\omega^{(k)}))$,
$\lambda^{''}_{\max}({\mathcal A}(\omega^{(k)}))$ at an iterate $\omega^{(k)}$ 
of Algorithm \ref{eigopt}. Finally, the lower bound $\gamma$ for the
second derivatives of $\lambda_{\max}({\mathcal A}(\omega))$ is assumed to be negative 
throughout this section without loss of generality.

The following result characterizes the global minimizer $\omega^{(k+1)}$ of $\overline{q}_k(\omega)$
for large $k$. This point always turns out to be the intersection point of two quadratic support functions
about the iterates that are closest to $\omega_\ast$ among the iterates on the left-hand and on the right-hand
side of $\omega_\ast$. Moreover if the distance between the iterates about which these two support functions
are constructed is $h$, then $\omega^{(k+1)}$ is located at a distance of $\Theta(h^2)$ to $\omega_\ast$.
%
\begin{lemma}[Minimizers of the Support Functions]\label{thm:eigopt_support_solutions}
Suppose $\: \omega_\ast$ is the unique global minimizer of $\lambda_{\max}({\mathcal A}(\omega))$
over $\omega \in \Omega$, and the eigenvalue $\lambda_{\max}({\mathcal A}(\omega_\ast))$ is multiple,
$0 \in {\rm Int} \: \partial \lambda_{\max}({\mathcal A}(\omega_\ast))$. 
The sequence $\{ \omega^{(k)} \}$ by Algorithm \ref{eigopt} satisfies the following for all large $k$: 
\begin{enumerate}
	\item[\bf (i)] The point $\; \omega^{(k+1)} \;$ is the intersection point of $q_{\ell(k)}(\omega)$ and $q_{r(k)}(\omega)$,
	where $\ell(k), r(k) \in \{ 0,\dots, k \}$ are given by
	\begin{equation*}
		\begin{split}
			\ell(k)		& \; :=	\;	\arg \min\{ \omega_\ast - \omega^{(j)} \; | \; j \in \{ 0,\dots, k \}
													\;\; {\rm s.t.} \;\; \omega_\ast > \omega^{(j)}	\}
					\quad	{\rm and}		\\
			r(k)		& \; :=	\;	\arg \min\{ \omega^{(j)} - \omega_\ast \; | \; j \in \{ 0,\dots, k \}
													\;\; {\rm s.t.} \;\; \omega_\ast < \omega^{(j)}	\}.
		\end{split}
	\end{equation*}
	\item[\bf (ii)] Letting $h := \max \{ \omega_{\ast} - \omega^{(\ell(k))} , \omega^{(r(k))} - \omega_\ast \}$, we have
	\begin{equation}\label{eq:char_wk}
		\omega^{(k+1)}
			\;	=	\;
		\alpha \cdot \omega_\ast	+	\beta \cdot \left( \frac{\omega^{(\ell(k))}  +  \omega^{(r(k))}}{2} \right)
					+	O(h^3),
	\end{equation}	
	where $\alpha, \beta \in {\mathbb R}^+$ are such that $\alpha + \beta = 1$ and $\beta = \Theta(h)$.
	\item[\bf (iii)] Furthermore, $ | \omega^{(k+1)} - \omega_\ast |  \; = \; \Theta(h^2)$. 
\end{enumerate}
\end{lemma}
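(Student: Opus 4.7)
The plan rests on Taylor-expanding each support function around $\omega_\ast$ and using the already established global convergence (Theorems \ref{thm:eigopt_global_convergence} and \ref{convergence_omega}) to reduce the problem to a local one. For any iterate $\omega^{(j)} = \omega_\ast + \sigma_j$ with $\sigma_j$ small, the piecewise analyticity of $\lambda_{\max}(\mathcal{A}(\cdot))$ near $\omega_\ast$ permits expansions of $\lambda_j$ and $\lambda'_j$ in terms of the one-sided derivatives $\lambda'_{\ast, \pm}, \lambda''_{\ast, \pm}$, chosen according to the sign of $\sigma_j$. Substituting into the formula for $q_j$ and evaluating at $\omega_\ast + t$ yields
\begin{equation*}
	q_j(\omega_\ast + t) \;=\; \lambda_\ast + \lambda'_{\ast, \pm}\, t + \tfrac{1}{2}(\gamma - \lambda''_{\ast, \pm})\, \sigma_j^2 + O\!\left((|\sigma_j|+|t|)^3\right).
\end{equation*}
Since $\gamma$ is a global lower bound for $\lambda_{\max}''(\mathcal{A}(\cdot))$ on the smooth set, the coefficients $\gamma - \lambda''_{\ast, \pm}$ are non-positive, and I would assume them strictly negative for the genericity needed in (iii).

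For part (i), Theorem \ref{convergence_omega} places $\omega^{(k+1)}$ in an arbitrarily small neighborhood of $\omega_\ast$ for all large $k$. On each side of $\omega_\ast$, the expansion above shows that $q_j(\omega_\ast)$ is maximized among left-hand (resp.\ right-hand) iterates by the one with smallest $|\sigma_j|$, i.e.\ by $q_{\ell(k)}$ (resp.\ $q_{r(k)}$). Since $q_{\ell(k)}'(\omega_\ast) = \lambda'_{\ast, -} + O(h) < 0$ and $q_{r(k)}'(\omega_\ast) = \lambda'_{\ast, +} + O(h) > 0$, the piecewise-quadratic function $\bar{q}_k$ is dominated by $q_{\ell(k)}$ immediately to the left of $\omega_\ast$ and by $q_{r(k)}$ immediately to the right, producing a unique local minimum at their intersection. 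The subtlest step is ruling out any distant intersection as the global minimizer; this is accomplished by invoking $\ell^{(k+1)} \to \lambda_\ast$ from Theorem \ref{thm:eigopt_global_convergence}, which forces the global minimum of $\bar{q}_k$ to approach $\lambda_\ast$ only through supports anchored near $\omega_\ast$, whereas competing intersections further away stay bounded away from $\lambda_\ast$.

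For parts (ii) and (iii), I would set $q_{\ell(k)}(\omega_\ast + t) = q_{r(k)}(\omega_\ast + t)$ via the expansion above with $h_\ell := \omega_\ast - \omega^{(\ell(k))}$ and $h_r := \omega^{(r(k))} - \omega_\ast$, so that $h = \max\{h_\ell, h_r\}$. The constant and $O(t^2)$ contributions cancel, leaving the linear-in-$t$ balance
\begin{equation*}
	(\lambda'_{\ast, +} - \lambda'_{\ast, -})\, t \;=\; \tfrac{1}{2}(\gamma - \lambda''_{\ast, -}) h_\ell^2 - \tfrac{1}{2}(\gamma - \lambda''_{\ast, +}) h_r^2 + O(h^3),
\end{equation*}
so $t = O(h^2)$ using $\lambda'_{\ast,+} - \lambda'_{\ast,-} > 0$; this gives the upper bound in (iii), while the matching lower bound of order $h^2$ follows from the generic non-vanishing of the right-hand side. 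For (ii), I would use $\tfrac{1}{2}(\omega^{(\ell(k))}+\omega^{(r(k))}) - \omega_\ast = (h_r-h_\ell)/2$ and choose $\beta$ so that $\beta(h_r-h_\ell)/2$ matches $t$ modulo $O(h^3)$, setting $\alpha = 1 - \beta$; the sizes $t = \Theta(h^2)$ from (iii) and $h_r - h_\ell = \Theta(h)$ (which is the generic situation in the algorithm's alternation pattern) then yield $\beta = \Theta(h)$ and $\alpha, \beta \in \mathbb{R}^+$ for $h$ small.
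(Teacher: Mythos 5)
Your proposal is correct and takes essentially the same route as the paper: localize $\omega^{(k+1)}$ near $\omega_\ast$ via Theorem \ref{convergence_omega}, use the one-sided derivative bounds to show the two innermost supports $q_{\ell(k)}, q_{r(k)}$ determine the minimizer of $\overline{q}_k$, and then Taylor-expand the intersection equation to extract parts (ii) and (iii); the detour through $\ell^{(k+1)}\to\lambda_\ast$ to exclude distant intersections is unnecessary once Theorem \ref{convergence_omega} is invoked. One caveat: your displayed expansion of $q_j(\omega_\ast+t)$ is only accurate to second, not third, order --- it omits the terms $(\lambda''_{\ast,\pm}-\gamma)\sigma_j t$ and $\tfrac{\gamma}{2}t^2$; these must be carried, but the $t^2$ terms cancel between the two supports and the cross terms only perturb the coefficient of $t$ by $O(h)$, so your balance equation and the conclusions $t=O(h^2)$, $\beta=\Theta(h)$ stand (indeed your leading coefficient $\tfrac{1}{2}(\gamma-\lambda''_{\ast,-})h_\ell^2-\tfrac{1}{2}(\gamma-\lambda''_{\ast,+})h_r^2$ is the correct one). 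Like the paper, you rely on the unproved but generically valid facts that $h_r-h_\ell=\Theta(h)$ and that the quadratic terms do not degenerate for the $\Theta$ lower bounds.
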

\begin{proof}
\textbf{(i)} The real analyticity of $\widetilde{\lambda}_1({\mathcal A}(\omega))$, 
$\widetilde{\lambda}_2({\mathcal A}(\omega))$ imply that these eigenvalue functions are continuously
differentiable. Observe that $\lambda_{\max}'({\mathcal A}(\omega))$ $=$ $\widetilde{\lambda}_1'({\mathcal A}(\omega))$
for all $\omega \in {\mathcal I}$ such that $\omega > \omega_\ast$, and 
$\lambda_{\max}'({\mathcal A}(\omega)) = \widetilde{\lambda}_2'({\mathcal A}(\omega))$
for all $\omega \in {\mathcal I}$ such that $\omega < \omega_\ast$, so there exists an interval
$\widetilde{\mathcal I} := (\omega_\ast - \delta, \omega_\ast + \delta) \subseteq {\mathcal I}$ 
for some $\delta > 0$ such that
\begin{equation*}
	\begin{split}
		\lambda_{\max}'(\mathcal{A}(\omega)) \;\; \geq \;\; \frac{\lambda'_{\ast,+}}{2}	
							&	\quad\quad \forall \omega \in  (\omega_\ast, \omega_\ast + \delta),	\\
		\lambda_{\max}'(\mathcal{A}(\omega)) \;\; \leq \;\; \frac{\lambda'_{\ast,-}}{2}	
							&	\quad\quad \forall \omega \in  (\omega_\ast - \delta, \omega_\ast).
	\end{split}
\end{equation*}
We can choose $\delta$ as small as we wish. In particular, in the subsequent arguments, we assume 
	$\delta \: \leq \: \min \{  (\lambda'_{\ast,-}) / (4\gamma), -(\lambda'_{\ast,+}) / (4\gamma) \}$ 
without loss of generality.

Theorem \ref{convergence_omega} shows that $\omega^{(k)} \rightarrow \omega_\ast$ as $k \rightarrow \infty$,
so for all $k$ large enough $\omega^{(k)}, \omega^{(r(k))}, \omega^{(\ell(k))} \in \widetilde{\mathcal I}$. For
such a large $k$, we have 
\[
	q_{\ell(k)}'(\omega^{(\ell(k))}) \; = \; \lambda_{\max}'({\mathcal A}(\omega^{(\ell(k))})) \; \leq \;  \frac{\lambda'_{\ast,-}}{2}.
\]
Additionally, the inequality $\delta \: \leq \:  (\lambda'_{\ast,-}) / (4\gamma)$ implies
\[
	q_{\ell(k)}'(\omega) \; = \;  \lambda_{\max}'({\mathcal A}(\omega^{(\ell(k))})) + \gamma (\omega - \omega^{(\ell(k))})
				\;\; \leq \;\; \frac{\lambda'_{\ast,-}}{2} - \gamma \delta
				\;\; \leq \;\; \frac{\lambda'_{\ast,-}}{4}.
\]
for all $\omega \in  (\omega_\ast - \delta, \omega^{(\ell(k))})$. This means $q_{\ell(k)}(\omega)$ is
decreasing on this interval, so
\[
	\overline{q}_{k}(\omega^{(\ell(k))}) \;\; = \;\; q_{\ell(k)}(\omega^{(\ell(k))})	
							\;\; < \;\;  q_{\ell(k)}(\omega)
							\;\; \leq \;\; \overline{q}_{k}(\omega)
	\quad	\forall \omega \in  (\omega_\ast - \delta, \omega^{(\ell(k))}).
\]
Consequently, $\omega^{(k+1)}$, the global minimizer of $\overline{q}_{k}(\omega)$, 
cannot lie in $(\omega_\ast - \delta, \omega^{(\ell(k))})$.

Analogous arguments apply to $q_{r(k)}(\omega)$ for $\omega \in \widetilde{\mathcal I}$ such that 
$\omega > \omega^{(r(k))}$. In particular,
\[
	q_{r(k)}'(\omega^{(r(k))}) \; = \; \lambda_{\max}'({\mathcal A}(\omega^{(r(k))})) \; \geq \;  \frac{\lambda'_{\ast,+}}{2},
\]
and, employing $\delta \: \leq \:  -(\lambda'_{\ast,+}) / (4\gamma)$, we have
\[
	q_{r(k)}'(\omega) \; = \;  \lambda_{\max}'({\mathcal A}(\omega^{(r(k))})) + \gamma (\omega - \omega^{(r(k))})
				\;\; \geq \;\; \frac{\lambda'_{\ast,+}}{2} + \gamma \delta
				\;\; \geq \;\; \frac{\lambda'_{\ast,+}}{4}
\]
for all $\omega \in (\omega^{(r(k))}, \omega_\ast + \delta)$. This leads us to the conclusion
\[
	\overline{q}_k(\omega^{(r(k))}) \;\; = \;\; q_{r(k)}(\omega^{(r(k))})	
							\;\; < \;\;  q_{r(k)}(\omega)
							\;\; \leq \;\; \overline{q}_k(\omega)
	\quad
	\forall \omega \in (\omega^{(r(k))}, \omega_\ast + \delta).
\]
Hence, $\omega^{(k+1)} \notin (\omega^{(r(k))}, \omega_\ast + \delta)$ as well.

As $\omega^{(k+1)} \in \widetilde{\mathcal I} = (\omega_\ast - \delta, \omega_\ast + \delta)$, 
the only possibility left out is that $\omega^{(k+1)} \in [\omega^{(\ell(k))}, \omega^{(r(k))}]$. But the 
global minimizer of $\overline{q}_k(\omega)$ in $[\omega^{(\ell(k))}, \omega^{(r(k))}]$ is the intersection
point of $q_{\ell(k)}(\omega)$ and $q_{r(k)}(\omega)$, as the former of these quadratic
functions is decreasing, while the latter is increasing on this interval.

\textbf{(ii)} It follows from part (i) that $\omega^{(k+1)}$ satisfies
$q_{\ell(k)}(\omega^{(k+1)}) = q_{r(k)}(\omega^{(k+1)})$. Now solving this equation for
$\omega^{(k+1)}$ yields
\[
	\omega^{(k+1)}
			=	
	\frac{  \lambda_{\ell(k)}	-	\lambda_{r(k)} 
								+
		 \lambda'_{r(k)} \omega^{(r(k))} 	-	\lambda'_{\ell(k)} \omega^{(\ell(k))}	
								-
		\frac{\gamma}{2} \left( \left[ \omega^{(r(k))} \right]^2	-	\left[ \omega^{(\ell(k))} \right]^2	\right)	}
		{ \left( \lambda'_{r(k)}	-	\lambda'_{\ell(k)} \right)
								-
			\gamma \left( \omega^{(r(k))}	-	\omega^{(\ell(k))}	\right)	}.
\]
By applications of Taylor's theorem with third order remainders to $\lambda_{r(k)}$, $\lambda_{\ell(k)}$, $\lambda'_{r(k)}, \lambda'_{\ell(k)}$,
specifically by expressing them in terms of $\lambda_{\max}({\mathcal A}(\omega))$ and one-sided 
derivatives of $\lambda_{\max}({\mathcal A}(\omega))$ at $\omega_\ast$, we obtain
\begin{equation*}
	\begin{split}
	\omega^{(k+1)}
		\;\;	=	\;\;
	\bigg\{ \;
		\bigg[ \lambda_\ast	+	\lambda'_{\ast,-} ( \omega^{(\ell(k))}  -  \omega_\ast  )
							+		\lambda''_{\ast,-} ( \omega^{(\ell(k))}  -  \omega_\ast  )^2	+ O(h^3) \bigg]
								\;\; -		\hskip 4ex	\\
		\bigg[ \lambda_\ast	+	\lambda'_{\ast,+} ( \omega^{(r(k))}  -  \omega_\ast  )
				+		\lambda''_{\ast,+} ( \omega^{(r(k))}  -  \omega_\ast  )^2	+ O(h^3) \bigg]		\;\; +	  
				\hskip 10ex  \\
		\bigg[ \lambda'_{\ast,+}	+	\lambda''_{\ast,+} ( \omega^{(r(k))}  -  \omega_\ast  )
							+		\lambda'''_{\ast,+} ( \omega^{(r(k))}  -  \omega_\ast  )^2	+ O(h^3) \bigg]	
				\times  \hskip 14ex	\\	
				\bigg[  ( \omega^{(r(k))} - \omega_\ast )  + \omega_\ast \bigg]	\;\; -	\hskip 8ex \\
		\bigg[ \lambda'_{\ast,-}	+	\lambda''_{\ast,-} ( \omega^{(\ell(k))}  -  \omega_\ast  )
							+		\lambda'''_{\ast,-} ( \omega^{(\ell(k))}  -  \omega_\ast  )^2	+ O(h^3) \bigg] \times	\hskip 14ex \\
				\bigg[	( \omega^{(\ell(k))} - \omega_\ast ) + \omega_\ast  \bigg]	\;\;	-	\hskip 8ex \\
					(\gamma/2) \bigg[ \left[ \omega^{(r(k))} \right]^2	-	\left[ \omega^{(\ell(k))} \right]^2	\bigg]	\; \bigg\}	
						\;\;\; \bigg/	  \hskip 34ex \\
	\bigg\{ \;
			\bigg[ \lambda'_{\ast,+}	+	\lambda''_{\ast,+} ( \omega^{(r(k))}  -  \omega_\ast  )
		+		\lambda'''_{\ast,+} ( \omega^{(r(k))}  -  \omega_\ast  )^2	+ O(h^3) \bigg]		\;\; -  \hskip 5ex \\
		\bigg[ \lambda'_{\ast,-}	+	\lambda''_{\ast,-} ( \omega^{(\ell(k))}  -  \omega_\ast  )	
		+		\lambda'''_{\ast,-} ( \omega^{(\ell(k))}  -  \omega_\ast  )^2	+ O(h^3) \bigg]	\;\;	-  \hskip 12ex \\
								\gamma 	\bigg[ \omega^{(r(k))}	-	\omega^{(\ell(k))}	\bigg]		\; \bigg\}.	\hskip 37ex
	\end{split}
\end{equation*}
This can be rearranged into
\[
		\omega^{(k+1)}
			\;	=	\;
		\frac{
		(\alpha_+ - \alpha_-) \cdot \omega_\ast	+	\eta \cdot \left( \frac{\omega^{(\ell(k))}  +  \omega^{(r(k))}}{2} \right) }
		{ \alpha_+ - \alpha_-  + 	\eta}
					\; + \;		O(h^3)
\]
where
\begin{equation*}
	\begin{split}
				\alpha_+	& :=	\left[ \lambda'_{\ast,+}	+	\lambda''_{\ast,+} ( \omega^{(r(k))}  -  \omega_\ast  )
							+		\lambda'''_{\ast,+} ( \omega^{(r(k))}  -  \omega_\ast  )^2 \right],	\\
				\alpha_-	& :=	\left[ \lambda'_{\ast,-}	+	\lambda''_{\ast,-} ( \omega^{(\ell(k))}  -  \omega_\ast  )
							+		\lambda'''_{\ast,-} ( \omega^{(\ell(k))}  -  \omega_\ast  )^2 \right],	\\
				\eta	\;\;	& :=	-\gamma \left( \omega^{(r(k))}		-	\omega^{(\ell(k))}  \right).				
	\end{split}
\end{equation*}
The desired result follows from $\alpha_+ - \alpha_- > 0$, $\; \alpha_+ - \alpha_- = \Theta(1)$, and
by setting 
		$\alpha :=  (\alpha_+ - \alpha_-) / (\alpha_+ - \alpha_-  + 	\eta)$,
		$\beta :=   \eta	/ (\alpha_+ - \alpha_-  + 	\eta)$.
		
\textbf{(iii)} This is immediate from equation (\ref{eq:char_wk}) by observing
\begin{equation*}
	\begin{split}
		\left|
			\omega^{(k+1)}  -  \omega_\ast
		\right|
			& \;	=	\;
		\left|
			\alpha \cdot \omega_\ast	+	\beta \cdot \left( \frac{\omega^{(\ell(k))}  +  \omega^{(r(k))}}{2} \right)
			- (\alpha + \beta) \cdot \omega_\ast
		\right|
					+	O(h^3),	\\
			& \;	=	\;
		\left|
			\beta \cdot \left[	  \left( \frac{\omega^{(\ell(k))}  +  \omega^{(r(k))}}{2} \right) - \omega_\ast	\right]
		\right|		+ O(h^3)
				\;	=	\;	\Theta(h^2).
	\end{split}
\end{equation*}
\end{proof}

The main rate-of-convergence result concerning Algorithm \ref{eigopt} in the non-smooth case
is presented next. It asserts that if the maximum of the errors of the last two
iterates is $h$, the error of the next iterate is $O(h^2)$. A similar rapid convergence
conclusion is drawn for the sequence $\{ \ell^{(k)} \}$ as well.

\begin{theorem}[Rate-of-convergence]\label{thm:quad_convergence}
Suppose $\: \lambda_{\max}({\mathcal A}(\omega))$ has a unique global minimizer
over $\Omega$ at $\: \omega_\ast$. Additionally,
suppose that the eigenvalue $\lambda_{\max}({\mathcal A}(\omega_\ast))$ is multiple, and
$0 \in {\rm Int} \; \partial \lambda_{\max}({\mathcal A}(\omega_\ast))$. 
Following assertions hold for the sequence $\{ \omega^{(k)} \}$ generated by Algorithm \ref{eigopt}
for all $k$ large enough:
\begin{enumerate}
	\item[\bf (i)] 	$k \in \{ \ell(k), r(k) \}$.
	\item[\bf (ii)] 
		$	\left|		\omega^{(k+1)}	- \omega_\ast		\right|
						\; = \;
				O( \max \{ 
							\left|		\omega^{(k)}	- \omega_\ast		\right|,
							\left|		\omega^{(k-1)}	- \omega_\ast		\right|
							\}^2	).
		$
	\item[\bf (iii)] If $\gamma$ is large enough in absolute value, then
		\[		
				\lambda_\ast	-	\ell^{(k+1)}		
						\; = \;
				O(	( \lambda_\ast - \ell^{(k-1)} )^2	).
		\]
\end{enumerate}
\end{theorem}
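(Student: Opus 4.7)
I would establish the three parts in order by a joint induction on $k$, leveraging Lemma~\ref{thm:eigopt_support_solutions} throughout. For (i), the key observation is that the proof of Lemma~\ref{thm:eigopt_support_solutions}(i) actually yields the stronger containment $\omega^{(k+1)} \in [\omega^{(\ell(k))}, \omega^{(r(k))}]$; since $\omega_\ast$ lies strictly inside this interval (both sides of $\omega_\ast$ being represented for all $k$ large enough by Theorem~\ref{convergence_omega} together with $0 \in \mathrm{Int}\,\partial\lambda_{\max}(\mathcal{A}(\omega_\ast))$), the point $\omega^{(k+1)}$ lands on one side of $\omega_\ast$ and is strictly closer to $\omega_\ast$ than the endpoint of the interval on that side. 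Consequently $\omega^{(k+1)}$ replaces either $\omega^{(\ell(k))}$ or $\omega^{(r(k))}$ as the nearest iterate on its side, yielding $k+1 \in \{\ell(k+1), r(k+1)\}$.

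For (ii), set $\epsilon_k := |\omega^{(k)} - \omega_\ast|$ and $h_k := \max\{|\omega^{(\ell(k))}-\omega_\ast|, |\omega^{(r(k))}-\omega_\ast|\}$. By (i), $\{\ell(k), r(k)\} = \{j, k\}$ for some $j < k$, with $\omega^{(j)}$ on the opposite side of $\omega_\ast$ from $\omega^{(k)}$. The core step is to show the \emph{alternation pattern} $j = k-1$ persists for all large $k$. Rewriting (\ref{eq:char_wk}) as
\[
\omega^{(k+1)} - \omega_\ast \;=\; \beta\,\frac{(\omega^{(\ell(k))} - \omega_\ast) + (\omega^{(r(k))} - \omega_\ast)}{2} \;+\; O(h_k^3),
\]
the sign of $\omega^{(k+1)} - \omega_\ast$ matches that of $|\omega^{(r(k))} - \omega_\ast| - |\omega^{(\ell(k))} - \omega_\ast|$; equivalently, $\omega^{(k+1)}$ falls on the same side of $\omega_\ast$ as whichever of the two centers is farther. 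Under the inductive hypothesis that alternation holds at step $k$ (so $\{\ell(k), r(k)\} = \{k-1, k\}$) together with the contraction $\epsilon_k = \Theta(\epsilon_{k-1}^2) \ll \epsilon_{k-1}$, the farther center is $\omega^{(k-1)}$, so $\omega^{(k+1)}$ lands opposite to $\omega^{(k)}$ and alternation is preserved at step $k+1$. This identifies $h_k = \max(\epsilon_{k-1}, \epsilon_k)$, and combining with Lemma~\ref{thm:eigopt_support_solutions}(iii) produces $\epsilon_{k+1} = \Theta(h_k^2) = O(\max(\epsilon_{k-1}, \epsilon_k)^2)$, which is (ii).

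For (iii), I would start from $\ell^{(k+1)} = \overline{q}_k(\omega^{(k+1)}) = q_{\ell(k)}(\omega^{(k+1)})$ and Taylor-expand $\lambda_{\ell(k)}$ and $\lambda'_{\ell(k)}$ to second order about $\omega_\ast$ along the appropriate one-sided analytic branch $\widetilde{\lambda}_i(\mathcal{A}(\omega))$ (the sign determined by the side on which $\omega^{(\ell(k))}$ lies). Letting $a := \omega^{(\ell(k))} - \omega_\ast$ and $b := \omega^{(k+1)} - \omega_\ast$, substitution into the definition of $q_{\ell(k)}$ cancels the linear-in-$a$ contributions and leaves
\[
\lambda_\ast - \ell^{(k+1)} \;=\; \frac{\lambda''_{\ast,\pm} - \gamma}{2}\, a^2 \;-\; \lambda'_{\ast,\pm}\, b \;+\; O(h_k^3),
\]
with the $\pm$ signs determined by the side of $a$. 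For $|\gamma|$ sufficiently large, the coefficient $(\lambda''_{\ast,\pm} - \gamma)/2$ exceeds $|\lambda'_{\ast,\pm}|\cdot C$, where $C$ is the constant from $|b| \le C h_k^2$ supplied by (ii), so the $a^2$-term is the leading contribution. This produces matching estimates $c_1 h_k^2 \le \lambda_\ast - \ell^{(k+1)} \le c_2 h_k^2$ for some $c_1, c_2 > 0$; applied two steps earlier, $\lambda_\ast - \ell^{(k-1)} \ge c_1 h_{k-2}^2$. By the alternation established in (ii), $h_k = \epsilon_{k-1} = \Theta(h_{k-2}^2)$, so $h_k^2 = \Theta(h_{k-2}^4) = O((\lambda_\ast - \ell^{(k-1)})^2)$, completing the argument.

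The main obstacle is the alternation step underpinning (ii) and (iii): the sign formula derived from (\ref{eq:char_wk}) can degenerate when $|\omega^{(\ell(k))} - \omega_\ast|$ and $|\omega^{(r(k))} - \omega_\ast|$ are of comparable magnitude, so a careful joint induction on the alternating-side property and on the recurrence $\epsilon_{k+1} = \Theta(\epsilon_{k-1}^2)$ is required to guarantee that one of the two distances strictly dominates the other at every step, pinning down the side of the new iterate unambiguously.
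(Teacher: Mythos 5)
Your part (i) is essentially the paper's argument and is fine: by (\ref{eq:char_wk}) the new iterate lies between $\omega_\ast$ and the midpoint of $[\omega^{(\ell(k))},\omega^{(r(k))}]$, hence it is strictly closer to $\omega_\ast$ than the previous bracket point on its own side. Your Taylor expansion of $q_{\ell(k)}$ in (iii) and the role you assign to large $|\gamma|$ (preventing cancellation between the $a^2$ and $b$ terms) also match the paper's computation in spirit.

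The genuine gap is the alternation hypothesis on which both (ii) and (iii) rest. You assume, as the inductive engine, that $\{\ell(k),r(k)\}=\{k-1,k\}$ with $\epsilon_k\ll\epsilon_{k-1}$ for all large $k$, i.e.\ that consecutive iterates strictly alternate sides of $\omega_\ast$ with a two-step quadratic contraction. You never establish a base case for this induction, and in fact the property is not guaranteed: if the two bracket distances are very unbalanced (say $\omega_\ast-\omega^{(\ell(k))}\ll h^2$ while $\omega^{(r(k))}-\omega_\ast=h$), then (\ref{eq:char_wk}) places $\omega^{(k+1)}$ on the \emph{same} side as $\omega^{(k)}$, and several consecutive iterates can land on one side before the sign flips. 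Your own closing remark flags this degeneracy but does not resolve it, and the induction cannot even be entered from such a configuration. The paper's proof of (ii) avoids alternation entirely with a short case analysis: WLOG $r(k)=k$, so $\ell(k)=\ell(k-1)$; either $\ell(k-1)=k-1$ (done by Lemma~\ref{thm:eigopt_support_solutions}(iii)), or $r(k-1)=k-1$, in which case $\omega^{(k)}\in(\omega_\ast,(\omega^{(\ell(k-1))}+\omega^{(r(k-1))})/2)$ forces $\omega_\ast-\omega^{(\ell(k))}<\omega^{(k-1)}-\omega_\ast$, whence $\max\{\omega_\ast-\omega^{(\ell(k))},\,\omega^{(r(k))}-\omega_\ast\}\le\max\{|\omega^{(k-1)}-\omega_\ast|,|\omega^{(k)}-\omega_\ast|\}$ and the lemma applies. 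Likewise for (iii) you cannot use $h_k=\Theta(h_{k-2}^2)$; the paper instead shows directly (using the largeness of $|\gamma|$ to prevent cancellation) that $h:=\lambda_\ast-\ell^{(k-1)}$ satisfies $\omega^{(k-1)}-\omega_\ast=\Theta(h)$ and $\omega^{(k)}-\omega_\ast=O(h)$, invokes part (ii) to get $|\omega^{(k+1)}-\omega_\ast|=O(h^2)$, and expands $q_k(\omega^{(k+1)})$ about $\omega_\ast$ to conclude $\lambda_\ast-\ell^{(k+1)}=O(h^2)$. You would need to replace the alternation step by an argument of this kind for the proof to go through.
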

\begin{proof}
\textbf{(i)} If $\omega^{(k)} > \omega_\ast$, then it is apparent from (\ref{eq:char_wk}) that
for large $k$ we must have $\omega^{(k)} \in (\omega_\ast, (\omega^{(\ell(k-1))} + \omega^{(r(k-1))})/2)$ 
implying $\omega^{(k)} - \omega_\ast < \omega^{(r(k-1))} - \omega_\ast$. Hence, in this case, $r(k) = k$.

Similarly, if $\omega^{(k)} < \omega_\ast$, then $\omega^{(k)} \in ( (\omega^{(\ell(k-1))} + \omega^{(r(k-1))})/2), \omega_\ast)$
for large $k$ by (\ref{eq:char_wk}). This in turn implies $\omega_\ast - \omega^{(k)} < \omega_\ast - \omega^{(\ell(k-1))}$, 
so $\ell(k) = k$.

\textbf{(ii)} Let us suppose $k = r(k)$ without loss of generality. (Otherwise, $k = \ell(k)$ by part (i) and
a similar arguments applies.) This means that $\ell(k) \neq k$, so $\ell(k) = \ell(k-1)$. If $\ell(k) = \ell(k-1) = k-1$,
then part (iii) of Lemma \ref{thm:eigopt_support_solutions} implies
\begin{equation*}
	\begin{split}
		| \omega^{(k+1)}	- \omega_\ast	|	
			&  \;\; = \;\;
	\Theta ( \max \{ 
								\omega_\ast  - \omega^{(\ell(k))}	,
								\omega^{(r(k))}	- \omega_\ast	
							\}^2 ) 	\\
			&  \;\; = \;\;
	\Theta ( \max \{ 
								\omega_\ast  - \omega^{(k-1)}	,
								\omega^{(k)}	- \omega_\ast	
							\}^2 ).
	\end{split}
\end{equation*}
Hence, let us suppose $\ell(k-1) \neq k-1$. But then $r(k-1) = k-1$ by part (i).
Furthermore, as 
$\omega^{(r(k))} = \omega^{(k)} \in (\omega_\ast, (\omega^{(\ell(k-1))} + \omega^{(r(k-1))})/2)$,
we must have 
\[
	\omega^{(k-1)} - \omega_\ast \;\; = \;\; \omega^{(r(k-1))} - \omega_\ast \;\; > \;\; \omega_\ast - \omega^{(\ell(k-1))}
														\;\; = \;\; \omega_\ast - \omega^{(\ell(k))}
\]
from which we deduce
\[
	\max \{ |\omega^{(k-1)} - \omega_\ast |, |\omega^{(k)} - \omega_\ast |  \}
		\;\;	\geq	\;\;
	\max \{  \omega_\ast - \omega^{(\ell(k))} ,  \omega^{(r(k))} - \omega_\ast  \}.
\]
Hence, letting $h := \max \{ |\omega^{(k-1)} - \omega_\ast |, |\omega^{(k)} - \omega_\ast |  \}$,
we have $\max \{  \omega_\ast - \omega^{(\ell(k))} ,  \omega^{(r(k))} - \omega_\ast  \} = O(h)$.
It follows from part (iii) of Lemma \ref{thm:eigopt_support_solutions} that
		$| \omega^{(k+1)} - \omega_\ast |	=	O(h^2)$,
completing the proof.

\textbf{(iii)} 
Part (i) of Lemma \ref{thm:eigopt_support_solutions} asserts 
that the point $\omega^{(k-1)}$ is the intersection point of $q_{\ell(k-2)}(\omega)$ and $q_{r(k-2)}(\omega)$.
Without loss of generality, let us assume $\omega^{(r(k-2))} - \omega_\ast > \omega_\ast - \omega^{(\ell(k-2))}$.
We have
\begin{equation*}
	\begin{split}
	\ell^{(k-1)} & \;\; = \;\;  q_{r(k-2)}(\omega^{(k-1)}) \\	
			& \;\; = \;\;	\lambda_{r(k-2)}	+	\lambda_{r(k-2)}' (\omega^{(k-1)} - \omega^{r((k-2)})
													+ \frac{\gamma}{2} (\omega^{(k-1)} - \omega^{(r(k-2))})^2.
	\end{split}
\end{equation*}
Now applications of Taylor's theorem about $\omega_\ast$ yields
\begin{equation*}
	\begin{split}
	\ell^{(k-1)} = 
		\bigg[ \lambda_{\ast} + \lambda_{\ast,+}' (\omega^{(r(k-2))} - \omega_\ast)	 
							+  \frac{\lambda_{\ast,+}''}{2} (\omega^{(r(k-2))} - \omega_\ast)^2 \bigg]\; + \hskip 11ex \\
		\bigg[ \lambda_{\ast,+}' + \lambda_{\ast,+}''(\omega^{(r(k-2))} - \omega_\ast)  \bigg]  (\omega^{(k-1)} - \omega^{(r(k-2))})
							+ \frac{\gamma}{2} (\omega^{(k-1)} - \omega^{(r(k-2))})^2		\\
				+ \;	O( (\omega^{(r(k-2))} - \omega_\ast)^3 ),	\hskip 10ex
	\end{split}
\end{equation*}
where we use $\omega^{(k-1)} - \omega_\ast = \Theta ((\omega^{(r(k-2))} - \omega_\ast)^2 )$ due to part (iii) of
Lemma \ref{thm:eigopt_support_solutions}. Letting $h := \lambda_\ast - \ell^{(k-1)}$, the last equation yields
\begin{equation*}
	\begin{split}
	\;\; h	\; = \; &	\; - \; \lambda_{\ast,+}' ( \omega^{(k-1)}  -  \omega_\ast  ) \; - \; 
						\frac{\lambda_{\ast,+}''}{2} (\omega^{(r(k-2))} - \omega_\ast)^2 \\
			  	& \hskip 5ex \; - \; \lambda_{\ast,+}''(\omega^{(r(k-2))} - \omega_\ast) (\omega^{(k-1)} - \omega^{(r(k-2))})	\\
				&  \hskip 12ex
						\; - \; \frac{\gamma}{2} (\omega^{(k-1)} - \omega^{(r(k-2))})^2	 
													\; + \; O( (\omega^{(r(k-2))} - \omega_\ast)^3)		\\
			\; = \;	 &	\; - \; \lambda_{\ast,+}' ( \omega^{(k-1)} - \omega_\ast  ) \; + \; 
										\frac{\lambda_{\ast,+}''}{2} (\omega^{(r(k-2))} - \omega_\ast)^2 \\
				& \hskip 11.5ex
				\; - \; \frac{\gamma}{2} (\omega^{(r(k-2))} - \omega_\ast)^2 \; + \; O( (\omega^{(r(k-2))} - \omega_\ast)^3).
	\end{split}
\end{equation*}
Assuming $\gamma$ is large enough, the terms on the order of 
$\Theta (\omega^{(k-1)} - \omega_\ast) = \Theta ((\omega^{(r(k-2))} - \omega_\ast)^2 )$
do not cancel out. This means $\omega^{(k-1)} - \omega_\ast = \Theta(h)$ and 
$( \omega^{(r(k-2))} - \omega_\ast )^2 = \Theta(h)$. 

Letting $h_2 := \lambda_\ast - \ell^{(k)} \leq \lambda_\ast - \ell^{(k-1)}$ (recall that the sequence $\{ \ell^{(k)} \}$
is increasing bounded from above by $\lambda_\ast$), and following similar steps, 
we also deduce $\omega^{(k)} - \omega_\ast = \Theta(h_2) = O(h)$.
Now it follows from part (ii) that $| \omega^{(k+1)} - \omega_\ast | = O(h^2)$. 
The point $\omega^{(k+1)}$ is the intersection point of $q_{\ell(k)}(\omega)$ and $q_{r(k)}(\omega)$,
where $\ell(k) = k$ or $r(k) = k$, so
\begin{equation*}
	\begin{split}
	\ell^{(k+1)}	\; = \;	\overline{q}_k( \omega^{(k+1)})		& \; = \;	q_k(\omega^{(k+1)})		\\
			&\; = \;  \lambda_k + \lambda_k'(\omega^{(k+1)} - \omega^{(k)})  + 
								\frac{\gamma}{2} (\omega^{(k+1)} - \omega^{(k)} )^2.
	\end{split}
\end{equation*}
Assume for now $\omega^{(k)} > \omega_\ast$.
Recalling $\omega^{(k)} - \omega_\ast = \Theta(h_2) = O(h)$, and
once again applying Taylor's theorem to $\lambda_k$, $\lambda_k'$ about $\omega_\ast$ give rise to
\begin{equation*}
	\begin{split}
	\ell^{(k+1)} = & \;\;
		\bigg[ \lambda_{\ast} + \lambda_{\ast,+}' (\omega^{(k)} - \omega_\ast)	 
							+  O ( (\omega^{(k)} - \omega_\ast)^2 ) \bigg] \; + \hskip 15ex \\
		&	\hskip 5ex \bigg[ \lambda_{\ast,+}' + O (\omega^{(k)} - \omega_\ast)  \bigg]  (\omega^{(k+1)} - \omega^{(k)})
							\; + \; \frac{\gamma}{2} (\omega^{(k+1)} - \omega^{(k)})^2	\\		
		=	& \;\;	\lambda_\ast  \; + \; \lambda_{\ast,+}' (\omega^{(k+1)} - \omega_\ast) \; + \; O(h^2) 
			\;\;	=	 \;\; \lambda_\ast + O(h^2),
	\end{split}
\end{equation*}
which in turn implies $\lambda_\ast - \ell^{(k+1)} = O(h^2)$ as desired.
If $\omega^{(k)} < \omega_\ast$, all of the equalities above still hold but by applying Taylor's theorem
on the left-hand side of $\omega_\ast$. This results in the same expressions except that 
occurrences of $\lambda_{\ast,+}'$ are replaced by $\lambda_{\ast,-}'$.
\end{proof}

\subsection{Numerical Experiments} \label{sec:num_exper_small}

\noindent
\textbf{Distance to a Nearest Definite Pair.} $\;$
Consider the matrices $A= {\rm diag}(-3:3) \in {\mathbb R}^{7\times 7}$ and $B \in {\mathbb R}^{7\times 7}$ 
defined by $b_{ij}=1/(i+j)$ except $b_{11} = b_{77} = -1$. This is an indefinite Hermitian pair example taken 
from \cite{CHENG1999}. We run both algorithms to compute the inner numerical radius $\zeta(A+{\rm i}B)$ 
and the distance $d_{\delta}(A,B)$ for $\delta=10^{-8}$.  The computed distance by both algorithms is 
$d_{\delta}(A,B)=0.8118872239262$. Figure \ref{figure} illustrates the field of values of $A+{\rm i}B$, 
$A+\Delta A+ {\rm i}(B+\Delta B)$ and $\widetilde{A}+ \ri \widetilde{B}=e^{-i\psi}(A+\Delta A +{\rm i}(B+\Delta B))$, where 
$\Delta A, \Delta B$ and $\psi$ are as in Algorithm \ref{alg_dist}. Note that $\widetilde{B}$ is positive definite 
with $\lambda_{\min}(\widetilde{B})=10^{-8}$. In each plot, the point where the inner numerical radius is attained 
is marked with a diamond, and the eigenvalues of $A+ \ri B$ are marked with circles. In this example, 
$\lambda_{\max}(A\cos\theta + B\sin\theta)$ turns out to be simple at the global minimizer $\theta_\ast$ 
and Table \ref{quad} illustrates the quadratic convergence of Algorithm \ref{Inner_Num_Radius}.
	\begin{figure}[h]
		$\begin{array}{rl}
			\includegraphics[width=6.2cm]{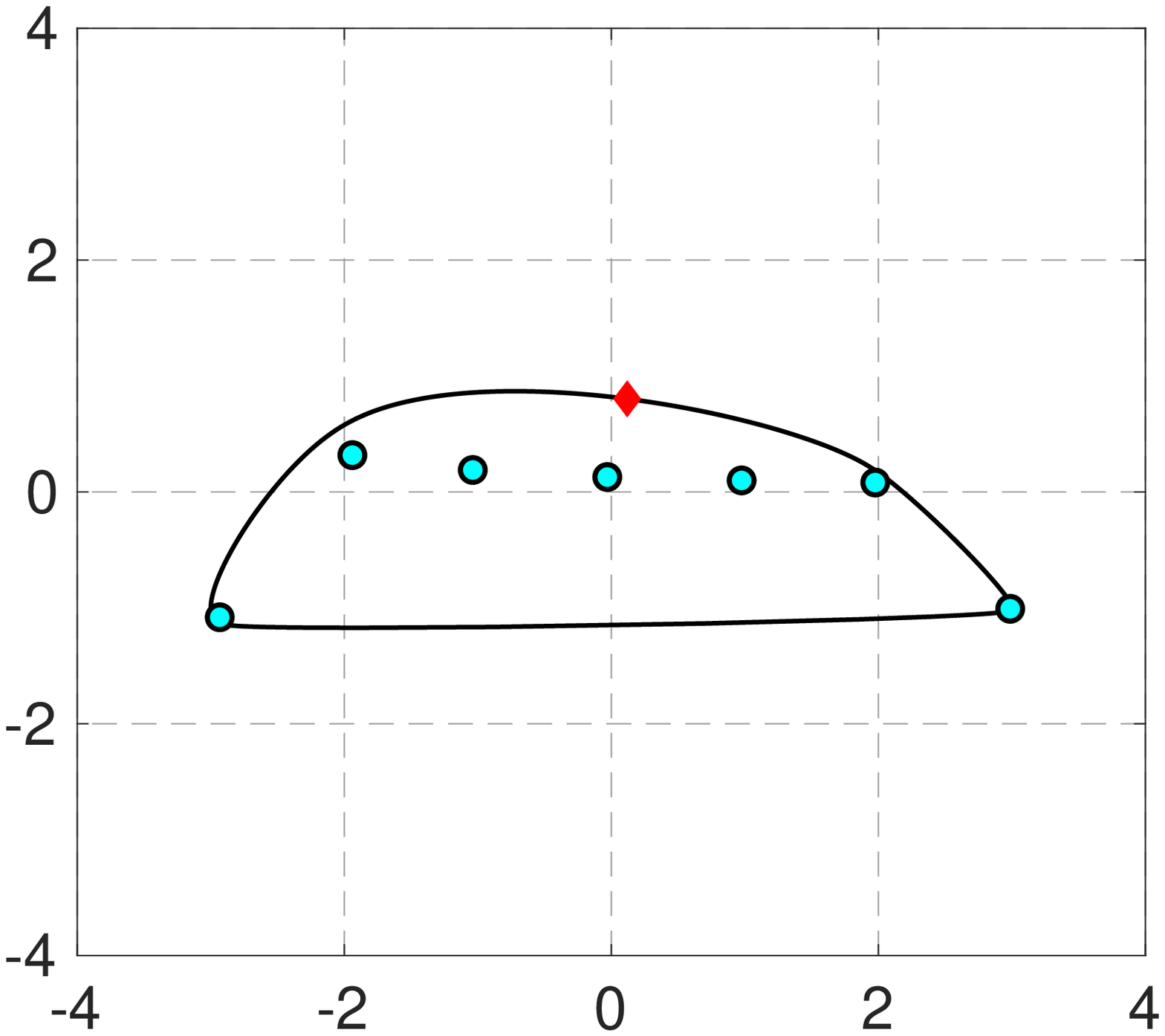} &
			\includegraphics[width=6.2cm]{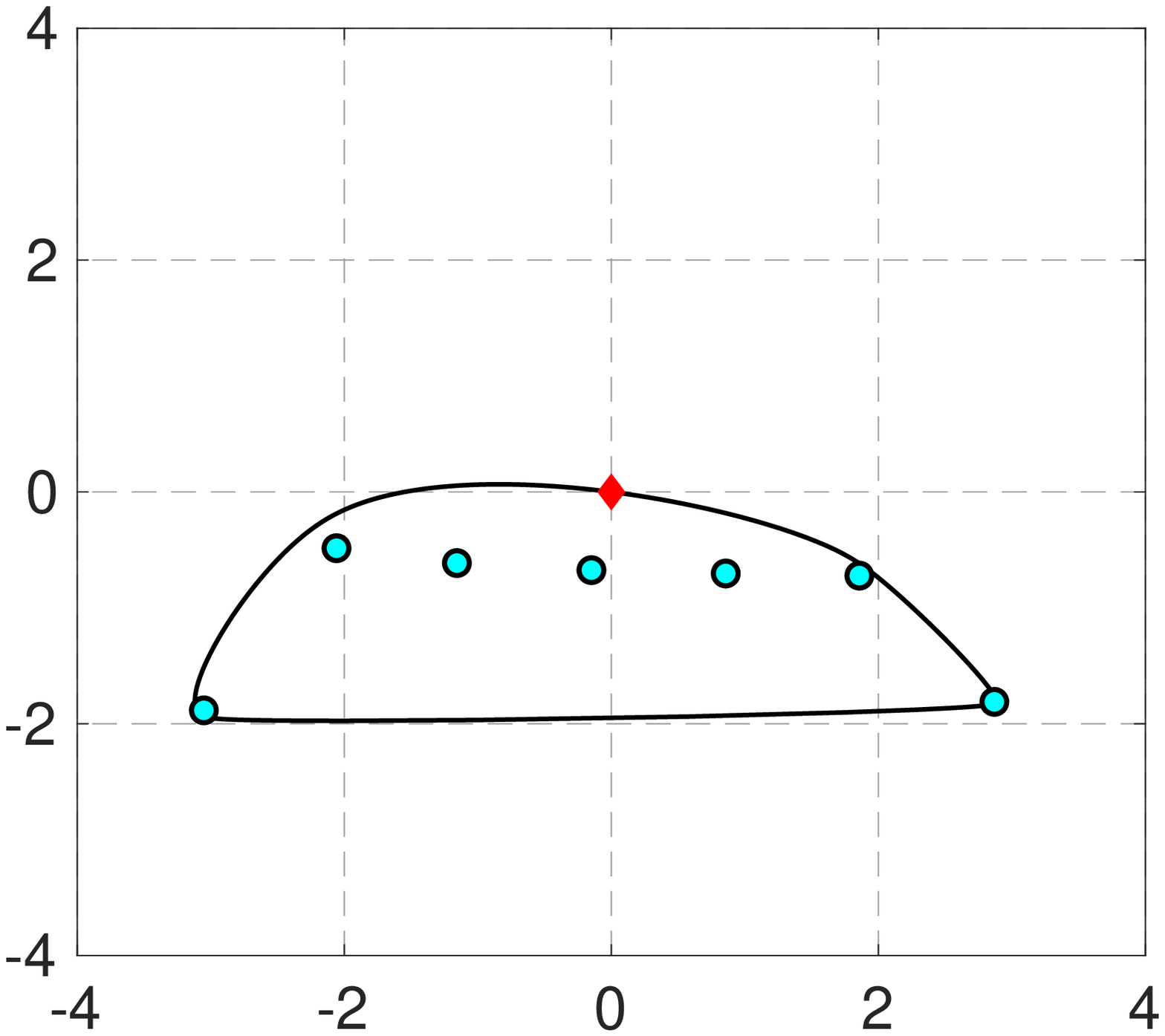}\\
			\multicolumn{2}{c}{\includegraphics[width=6.2cm]{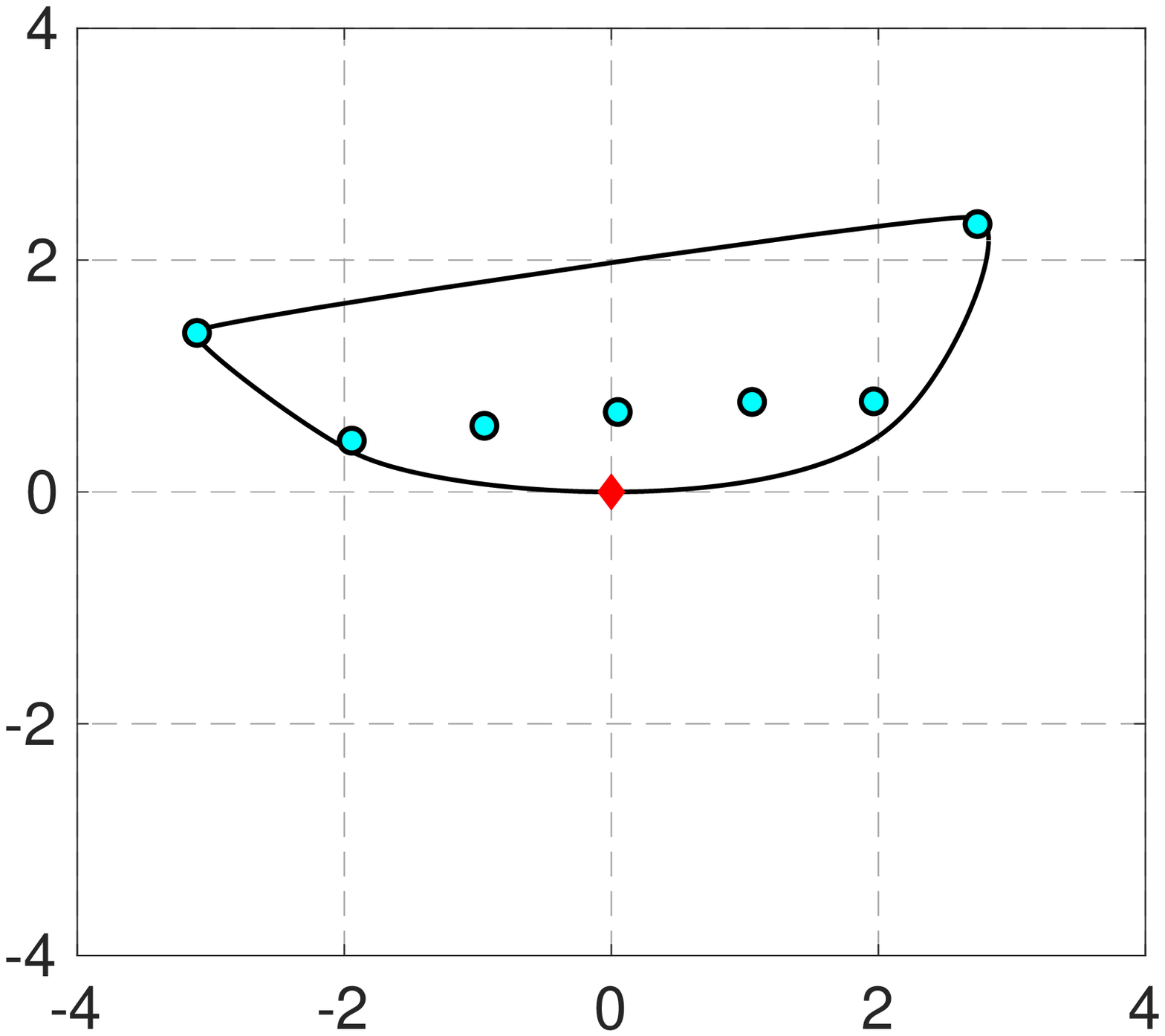}}
		\end{array}$
		\caption{The figure depicts the field of values of $A+ \ri B$ (top left), $(A+\Delta A) + \ri (B+\Delta B)$ (top right), 
		$e^{-\ri \psi}((A+\Delta A) +{\rm i}(B+\Delta B))$ (bottom), where $\Delta A, \Delta B$, $\psi$ are those returned 
		by Algorithm \ref{alg_dist} for the Hermitian pair $(A,B)$ taken from \cite{CHENG1999} and $\delta = 10^{-8}$. 
		In each case, the diamond marks the point where the inner numerical radius is attained, whereas the 
		circles mark the eigenvalues.} 
		\centering
		\label{figure}
	\end{figure}
	
	\begin{table}[tb]
		
		\begin{center}
			\begin{tabular}{|c| c |}
				\hline
				$k$ & $r^{(k)}$ \\
				\hline
				2 & 0.\underline{8}687683091642120 \\
				3 & 0.\underline{811}9559545628993  \\
				4 & 0.\underline{81188722}40421637 \\
				5 & 0.\underline{81188722392623}81 \\
				6 & 0.\underline{8118872239262371} \\
				\hline
			\end{tabular}
		\end{center}
		\caption{The iterates $r^{(k)}$ of Algorithm \ref{Inner_Num_Radius} are listed with respect to $k$.
		These listed values indicate a quadratic rate-of-convergence as expected in theory. }
		\label{quad}
	\end{table}

\medskip

\noindent
\textbf{Testing Hyperbolicity of Quadratic Eigenvalue Problems.}
The quadratic eigenvalue problem (QEP)
\begin{equation}\label{QEP}
Q(\lambda)x=(\lambda^2A+\lambda B +C)x=0
\end{equation}
is said to be hyperbolic if $A,B,C\in\C^{n\times n}$ are Hermitian, $A$ is positive definite, and 
\begin{equation}\label{hyperbolicity}
	(x^*Bx)^2 \; > \; 4(x^*Ax)(x^*Cx) \quad\quad  \forall x\in\C^n \;\; {\rm s.t.} \;\; x \neq 0.
\end{equation}
Let $x$ be an eigenvector and $\lambda$ be the corresponding eigenvalue satisfying (\ref{QEP}).
By multiplying (\ref{QEP}) by $x^\ast$ from left, we obtain the quadratic equation 
$\lambda^2x^\ast Ax  + \lambda x^\ast Bx +  x^\ast Cx  =  0$ with solutions
\begin{equation}\label{hyper_eig_equation}
\frac{-x^\ast B x \pm \sqrt{(x^\ast Bx)^2 - 4(x^\ast A x)(x^\ast Cx)}}{2x^\ast Ax}.
\end{equation}
At least one of these two solutions must be the eigenvalue $\lambda$ corresponding to $x$, which implies, 
assuming the existence of $n$ linearly independent eigenvectors and due to (\ref{hyperbolicity}), 
an hyperbolic QEP has at least $n$ real eigenvalues. More is true; it is shown in 
\cite[Section 7.6]{lancaster1966} that if the QEP is hyperbolic with positive definite $B$ and 
positive semidefinite $C$, then counting also the multiplicities, 
$n$ numbers as in (\ref{hyper_eig_equation}) involving positive square-roots correspond to $n$
eigenvalues of (\ref{QEP}), referred as primary eigenvalues. The remaining $n$ numbers
as in (\ref{hyper_eig_equation}) but involving negative square-roots also correspond to $n$
eigenvalues; these are called the secondary eigenvalues. The eigenvectors associated with the primary 
eigenvalues form a linearly independent set, and the same is true for the secondary eigenvalues.


It is well known \cite{higham2002detecting} that the hyperbolicity of the QEP in (\ref{QEP}) is equivalent to 
the definiteness of $(A_1,B_1)$ with
\begin{equation}\label{hyper_pair}
	A_1
		=
			\left[
				\begin{array}{cc}
					-C & 0 \\
				     \:\: 0 & A
				\end{array}
			\right]
			\qquad
	B_1
		=
			-\left[
				\begin{array}{cc}
					B & A \\
					A & 0
				\end{array}
			\right].
\end{equation}
Consider in particular the QEP with 
\[
	A=I_{4}, 
			\quad 
	B =
		\left[
		  \begin{array}{rrrr}
			8 & -4 & 0& 0 \\
			-4 & 12 &  -4 & 0 \\
			0 & -4 & 12 & -4 \\
			0 & 0 & -4 & 8
		 \end{array}
		\right],
			\quad
	C =
		\left[
		  \begin{array}{rrrr}
			2 & -1 & 0& 0 \\
			-1 & 3 &  -1 & 0 \\
			0 & -1 & 3& -1 \\
			0 & 0 & -1 & 2
		\end{array}
	       \right]
\]
corresponding to a mass, damping, stiffness matrix, respectively.
Applications of Algorithms \ref{Inner_Num_Radius} and \ref{eigopt} for the computation of the inner 
numerical radius of $A_1 + {\rm i} B_1$ yield the global minimizer $\theta_\ast = 2.5682098635$ of
$\lambda_{\max}(A_1 \cos \theta + B_1 \sin \theta)$ with
$\lambda_{\max}(A_1\cos\theta_\ast + B_1\sin\theta_\ast)=$ $-0.4897656697$. 
It follows from Theorem \ref{char_inner} that the field of values of $A_1+ {\rm i} B_1$ does not contain the 
origin, implying that the pair $(A_1, B_1)$ is definite and hence the QEP is hyperbolic.

\medskip

\noindent
\textbf{Comparison of Algorithms \ref{Inner_Num_Radius} and \ref{eigopt}.} Next we compare 
the performances of the level-set based and support based algorithms to compute the inner numerical radius 
of $A_n+ \ri B_n$ for various $n$, where $A_n$ is the Fiedler matrix and $B_n$ is the Moler matrix of size $n\times n$. 
In Table \ref{comparison}, we provide CPU times and the number of iterations required by each algorithm. The reason why the level-set 
approach requires more time is that it computes all eigenvalues of matrices of size $2n\times 2n$, whereas
the support based algorithm computes only the largest eigenvalues of $n \times n$ matrices. 
\begin{table}[tb]
	
	\begin{center}
		\begin{tabular}{ccc||ccc}
			\hline
			\hline
			\multicolumn{3}{c}{{\sc level-set}}			\hskip 10ex								&			\multicolumn{3}{c}{{\sc support-based}}	\hskip 10ex \\
			$n$	&	iter & $t$  &	$n$	& iter & $t$	\\
			\hline
			120		&	5	&	1.60				&	120	&	84	&	0.40	\\
			240		&	5	&	8.90			   	&	240	&	119	&	1.39	  \\
			360		&	6	&	45.44				&	360	&	143	&	4.04		\\
			480		&	6	&	107.99				&	480	&	162	&	6.28	  	\\
			\hline
			\hline
		\end{tabular}
	\end{center}
	\caption{CPU times (in seconds) and the number of iterations required by Algorithms \ref{Inner_Num_Radius} and  \ref{eigopt} 
	to compute the inner numerical radius of $A_n+ \ri B_n$ for various $n$, where $A_n$ is the Fiedler 
	matrix and $B_n$ is the Moler matrix of size $n\times n$.}
	\label{comparison}
\end{table}

\medskip

\noindent
\textbf{The case when $\lambda_{\max}(H(\theta_\ast))$ is not simple.}
The next example illustrates 
the convergence rates of Algorithms \ref{Inner_Num_Radius} and \ref{eigopt} when the largest 
eigenvalue is not simple at the global minimizer. Consider the tridiagonal matrix
\begin{equation}\label{multiple_A}
		\widetilde{A}
			=
				\left[
					\begin{array}{ccccc}
						1		&	{\rm i}	&		&		&		\\
						{\rm i}	&  1		& {\rm i}	&		&		\\
								& {\rm i}	&	a_3	&	\ddots	&		\\
								&		&  \ddots		& \ddots	&	{\rm i}	\\
								&		&		& {\rm i}	&	a_n	\\
					\end{array}
				\right]
			+ 0.5 {\rm i} I_n
			\quad\quad \text{with} \quad a_j =2+\frac{j}{n}
\end{equation}
for $n=10$ and let $A=\widetilde{A}e^{i\pi/6}$. The global minimum of $\lambda_{\max}((A e^{{\rm i}\theta} + A^\ast e^{-{\rm i} \theta})/2)$ is 
attained at $\theta_\ast=3.665191429188092$ and $\lambda_{\max}(H(\theta_\ast))$ has multiplicity $2$. The quadratic convergence 
of Algorithm \ref{Inner_Num_Radius} is no longer true for this example; this is illustrated in Table \ref{multi} 
where the last seven iterates of the algorithm are listed. On the other hand, we observe faster convergence for Algorithm \ref{eigopt} 
consistent with the rate-of-convergence result in Theorem \ref{thm:quad_convergence}. 
This is depicted in Table \ref{table:multi_support}.
	\begin{table}[tb]
		\begin{center}
			\begin{tabular}{|c| c |}
				\hline
				$k$ & $r^{(k)}$ \\
				\hline
				28   &-0.9999999999999492 \\
				29   & -0.9999999999999833 \\
				30 & -0.9999999999999944 \\
				31 & -0.9999999999999982  \\
				32 & -0.9999999999999996 \\
				33 & -0.9999999999999998 \\
				34 & -1.0000000000000000 \\
				\hline
			\end{tabular}
		\end{center}
		\caption{The iterates $r^{(k)}$ of Algorithm \ref{Inner_Num_Radius} to compute the inner numerical radius of the matrix 
			$A$ defined as in (\ref{multiple_A}) are listed.}
		\label{multi}
	\end{table}
	\begin{table}[tb]
		\begin{center}
			\begin{tabular}{ c| c | c }
				\hline
				\hline
				$k$  & $ \ell^{(k+1)}$ & $\abs{\omega^{(k+1)} - \theta_\ast}$ \\
				\hline
				12 &  -1.\underline{0000}24850740653  &0.000903071031368 \\
				13 &  -1.\underline{00000}2750999332  & 0.000041649882203 \\
				14 &  -1.\underline{00000000}1968154 & 0.000001828314169\\
				15 &  -1.\underline{0000000000}11313  &0.000000003906009\\
				16 &  -1.\underline{000000000000000}  & 0.000000000007542\\
				17 &  -1.\underline{000000000000000}  & 0.000000000000000 \\
				\hline
				\hline
			\end{tabular}
		\end{center}
		\caption{The sequence $\{ \ell^{(k+1)} \}$ and the error of the sequence $\{ \omega^{(k+1)} \}$ 
		by Algorithm \ref{eigopt} are listed with respect to $k$ on the example involving the computation of the inner numerical 
		radius of $A=\widetilde{A}e^{i\pi/6}$, where $\widetilde{A}$ is as in (\ref{multiple_A}).}
		\label{table:multi_support}
	\end{table}

\medskip

\noindent
\textbf{Linear systems in saddle point form.} 
This is another example where the eigenvalue function is non-smooth at the optimizer.
The matrix of a saddle point linear system is of the form
\begin{equation}\label{saddle_form}
\mathcal{A}	= \left[
\begin{array}{cc}
A &  \;\;\;\; B^T \\
B & -C
\end{array}
\right],
\end{equation}
where $A \in \R^{n\times n}$ is symmetric positive definite, $B\in\R^{m\times n}$ with $m\leq n$ and $C\in\R^{m\times m}$ is 
symmetric positive semidefinite. The matrix $\mathcal{A}$ is usually large and sparse, and can be reduced to the block diagonal matrix 
${\rm diag}(A,S)$ with $S=-(C+BA^{-1}B^T)$ by row and column operations; indeed $X^T {\mathcal A} X = {\rm diag}(A,S)$
for some invertible $X \in {\mathbb R}^{(n+m)\times (n+m)}$. Now observe that $S$ is symmetric negative semidefinite
implying that $\mathcal{A}$ is indefinite with $n$ positive eigenvalues and ${\rm rank}(S)$ negative eigenvalues.

The indefiniteness of $\mathcal{A}$ is a hurdle for iterative solvers such as Krylov subspace methods; it slows down the
convergence. It has been shown in \cite{liesen2008nonsymmetric} that, letting $\mathcal{J} := {\rm diag}(I_n, -I_m)$,
if there is a real scalar $\mu$ such that $\mathcal{M}(\mu)=\mathcal{A}-\mu\mathcal{J}$ is positive definite, then a conjugate
gradient iteration that depends on this value of $\mu$  can be constructed to solve the linear system 
$\mathcal{J}\mathcal{A}x=\mathcal{J}b$. Clearly the positive definiteness of $\mathcal{M}(\mu)$ for some $\mu\in\R$ 
is equivalent to the positive definiteness of $-\mathcal{A}\sin\theta+\mathcal{J}\cos\theta$ for some $\theta\in[0,2\pi]$. 
Now Theorem \ref{theta_existence} suggests testing the definiteness of the pair $(\mathcal{A},\mathcal{J})$.
If this pair is definite, then, by part (iii) of Theorem \ref{theta_existence}, the matrix $-\mathcal{A}\sin\varphi+\mathcal{J}\cos\varphi$ 
is positive definite for $\varphi := -\pi/2 + \phi$ where $\phi$ is the angle such that $\gamma({\mathcal A},{\mathcal J}) e^{\ri \phi}$
is the point in $F({\mathcal A},{\mathcal J})$ closest to the origin. This in turn implies that $\mathcal{M}(\mu)$ is positive definite 
for $\mu := -\cos\varphi/\sin\varphi$.

We consider a linear system in saddle point form that arises from a stable discretization of a Stokes equation 
with the coefficient matrix $\mathcal{A}$ of the form (\ref{saddle_form}). This linear system is generated by the 
MATLAB package ``Incompressible Flow Iterative Solution Software (IFISS) version 3.5" \cite{silvester2010incompressible}; 
more specifically, the sparse matrices $A,B,C$ are constructed by running the script file \texttt{stokes\_testproblem} 
with the default options, resulting in $A$ of size $n=578$ and $C$ of size $m=256$. For compatibility with the particular examples 
worked through in \cite{liesen2008nonsymmetric,guo2009improved}, we shift $A$ by $0.0764I_n$, 
and run support based algorithm to determine the definiteness of the pair $(\mathcal{A},\mathcal{J})$. We detect that 
the pair is definite. Moreover computations yield $\mu=0.0541$ for which $\mathcal{M}(\mu)$ is positive definite,
indeed $\lambda_{\min}(\mathcal{M}(\mu))=0.0223$. The non-smoothness in these computations is encountered
in a strong fashion, as $\lambda_{\max}(\mathcal{A}\cos\theta+\mathcal{J}\sin\theta)$ at the minimizing
$\theta$ has multiplicity three. Table \ref{Saddle_mult_support} indicates rapid convergence for the
sequences $\{ \ell^{(k)} \}$ and $\{ \omega^{(k)} \}$ by Algorithm \ref{eigopt}. In particular, according to the table, 
the sequences $\{ \ell^{(2k)} \}$, $\{ \ell^{(2k+1)} \}$ appear to be converging at a quadratic rate. Additionally, the 
number of accurate decimal digits of $\omega^{(k)}$ is doubled at every two iterations. Once again, these are
consistent with the assertions of Theorem \ref{thm:quad_convergence}. 

 \begin{table}[tb]
	\begin{center}
		\begin{tabular}{ c| c | c }
			\hline
			\hline
			$k$  & $\ell^{(k+1)}$ &  $\abs{ \omega^{(k+1)} - \theta_\ast}$ \\
			\hline
			11 &  -0.\underline{0222}31442152684 &0.000008162291389 \\
			12 &  -0.\underline{022224901}919276 & 0.000006757114932 \\
			13 &  -0.\underline{022224901}723492& 0.000000000136544 \\
			14 &  -0.\underline{02222490166667}1 & 0.000000000058712 \\
			15 &  -0.\underline{022224901666670}  & 0.000000000000000 \\
			\hline
			\hline
		\end{tabular}
	\end{center}
	\caption{This table is analogous to Table \ref{table:multi_support}, but it concerns
	the example of the saddle point linear system arising from the Stokes equation,
	in particular the positive definiteness of ${\mathcal A} - \mu {\mathcal J}$ for some 
	$\mu$, where ${\mathcal A} \in {\mathbb R}^{(n+m)\times (n+m)}$ is as in (\ref{saddle_form})
	and $\mathcal{J} := {\rm diag}(I_n, -I_m)$.}
	\label{Saddle_mult_support}
 \end{table}

\section{Subspace Framework for Large-Scale Computation of Inner Numerical Radius}\label{sec:subspace}

We now deal with the general univariate eigenvalue optimization problems of the form (\ref{eq:main_problem2}) 
when the Hermitian matrices $A_1, \dots, A_d$ involved are of large size. This setting encompasses 
the eigenvalue optimization characterization (\ref{Main_problem}) for the inner numerical radius when 
the matrices $A, B$ are large. Hence, the approach discussed in this section can be incorporated
into Algorithm \ref{alg_dist} to deal with large Hermitian matrix pairs.
For instance, it can be used to determine the definiteness of a large-scale 
Hermitian pair $(A,B)$, or for such a large definite Hermitian pair, it can be
used to find an angle $\varphi$ such that $\widetilde{A} + \ri \widetilde{B} \; = \; e^{-\ri \varphi} (A + \ri B)$ 
with $\lambda_{\min}(\widetilde{B}) = \gamma(A,B)$.

We rely on the subspace framework from \cite{kangal2018}, which is based on 
the conversion of the original problem into a small-scale one by means of orthogonal projections and 
restrictions to certain subspace. It is established in the literature that the framework converges globally 
and at least at a superlinear rate locally \cite{kangal2018,Kressner2017} under the
assumption that the objective eigenvalue function $\lambda_{\max}({\mathcal A}(\omega))$ is simple at the 
converged global minimizer. Here we generalize the basic subspace procedure in \cite{kangal2018}
slightly, taking into account also the non-smooth case when $\lambda_{\max}({\mathcal A}(\omega))$ is 
multiple at the converged global minimizer. We prove that the convergence of the generalized subspace 
procedure occurs at a quadratic rate asymptotically in the non-smooth case. This rate-of-convergence 
issue in the non-smooth setting has been left open by the previous works.

\subsection{Generalized Subspace Procedure}
The subspace procedure is built around the reduced problems of the form
\begin{equation}\label{reduced_opt}
	\min_{\omega\in\Omega}	\: \lambda_{\max}(\mathcal{A}^{\mathcal{V}}(\omega)),
\end{equation}
where
\begin{equation*}
	\begin{split}
		\mathcal{A}^{\mathcal{V}}(\omega)
			 \; & := \;
		V^\ast {\mathcal A}(\omega) V	\\
			 \; & = \;
		f_1(\omega)V^\ast A_1V + \dots +f_d(\omega)V^\ast A_dV
	\end{split}
\end{equation*}
for a given small dimensional subspace ${\mathcal V}$, say ${\rm dim} \: {\mathcal V} = k$,
and an $n\times k$ matrix $V$ whose columns form an orthonormal basis for ${\mathcal V}$. 
Note that (\ref{reduced_opt}) involves smaller $k\times k$ eigenvalue 
problems compared with (\ref{eq:main_problem2}), which involves $n\times n$ eigenvalue problems.

In the remaining part of this subsection, we shall describe a procedure to construct a small dimensional
subspace ${\mathcal V}$ such that the global minimizers and the globally minimal values of
$\lambda_{\max}({\mathcal A}(\omega))$ and $\lambda_{\max}^{\mathcal V}({\mathcal A}(\omega))$
are nearly the same. To this end, we remind a lemma from \cite{kangal2018} that relates the eigenvalues 
of $\mathcal{A}(\omega)$ and $\mathcal{A}^{\mathcal{V}}(\omega)$. 
Here and throughout the rest of this section $\lambda_j({\mathcal A}(\omega))$ and $v_j({\mathcal A}(\omega))$
denote the $j$th largest eigenvalue and a corresponding unit eigenvector of ${\mathcal A}(\omega)$.
\begin{lemma}\label{monotonicity_Hermite}
	Let $\mathcal{V}_1,\mathcal{V}_2$ be two subspaces of $\; \C^n$ such that 
	$\mathcal{V}_1\subseteq \mathcal{V}_2$. We have the following for each $k = 1, \dots , \dim \: {\mathcal V}_1$:
	\begin{enumerate}
		\item [\textbf{(i)}] (Monotonicity) 
			$\lambda_k(\mathcal{A}^{\mathcal{V}_1}(\omega))
						\leq
			\lambda_k(\mathcal{A}^{\mathcal{V}_2}(\omega))\leq\lambda_k(\mathcal{A}(\omega))
				\;\;	\forall \omega \in \Omega$.
		\item [\textbf{(ii)}] (Hermite Interpolation) 
			For a given $\omega\in\Omega$, if $\; \mathcal{V}_1$ contains 
			$v_1(\mathcal{A}(\omega)),$ $\dots,$ $v_j({\mathcal A}(\omega))$, then 
			the following hold for each $k = 1,\dots,j$:
		\begin{itemize}
			\item $\lambda_k(\mathcal{A}^{\mathcal{V}_1}(\omega))=\lambda_k(\mathcal{A}(\omega))$, 
			\item if $\; \lambda_k(\mathcal{A}(\omega))$ is simple, then so is 
			$\lambda_k(\mathcal{A}^{\mathcal{V}_1}(\omega))$, and satisfies \\
			$\lambda_k'(\mathcal{A}^{\mathcal{V}_1}(\omega))=\lambda_k'(\mathcal{A}(\omega))$.
		\end{itemize}
	\end{enumerate}
\end{lemma}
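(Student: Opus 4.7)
The plan is to exploit the Courant–Fischer min-max characterization of eigenvalues to handle part (i), and then combine part (i) with a direct computation in coordinates to handle part (ii).

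For part (i), I would first fix orthonormal bases, letting $V_1$, $V_2$ denote matrices whose columns form an orthonormal basis for $\mathcal{V}_1$ and $\mathcal{V}_2$ respectively. The key observation is that
\[
\lambda_k(\mathcal{A}^{\mathcal{V}_i}(\omega))
\;=\;
\max_{\substack{\mathcal{S} \subseteq \mathcal{V}_i \\ \dim \mathcal{S} = k}}
\;\min_{\substack{y \in \mathcal{S} \\ \|y\|_2 = 1}} y^\ast \mathcal{A}(\omega) y,
\]
because the map $z \mapsto V_i z$ is an isometry onto $\mathcal{V}_i$ that intertwines $\mathcal{A}^{\mathcal{V}_i}(\omega)$ with the restriction of $\mathcal{A}(\omega)$ to $\mathcal{V}_i$. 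Both inequalities in (i) then follow by enlarging the collection of subspaces over which the outer maximum is taken: a $k$-dimensional subspace of $\mathcal{V}_1$ is a $k$-dimensional subspace of $\mathcal{V}_2$, and a $k$-dimensional subspace of $\mathcal{V}_2$ is a $k$-dimensional subspace of $\mathbb{C}^n$.

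For part (ii), fix $\omega$ and suppose $v_1,\dots,v_j$ (short for $v_1(\mathcal{A}(\omega)), \dots, v_j(\mathcal{A}(\omega))$) all lie in $\mathcal{V}_1$. For each $i \in \{1,\dots,j\}$ write $v_i = V_1 z_i$ with $\|z_i\|_2 = 1$, and compute
\[
\mathcal{A}^{\mathcal{V}_1}(\omega) z_i \;=\; V_1^\ast \mathcal{A}(\omega) V_1 z_i \;=\; V_1^\ast \mathcal{A}(\omega) v_i \;=\; \lambda_i(\mathcal{A}(\omega)) \, z_i,
\]
so each $\lambda_i(\mathcal{A}(\omega))$ is an eigenvalue of $\mathcal{A}^{\mathcal{V}_1}(\omega)$ with eigenvector $z_i$, and the $z_i$ form an orthonormal set since $V_1$ has orthonormal columns and $v_1,\dots,v_j$ are orthonormal. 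Combined with the upper bound in part (i), namely $\lambda_k(\mathcal{A}^{\mathcal{V}_1}(\omega)) \leq \lambda_k(\mathcal{A}(\omega))$, these $j$ Rayleigh values must be the $j$ largest eigenvalues of $\mathcal{A}^{\mathcal{V}_1}(\omega)$, yielding $\lambda_k(\mathcal{A}^{\mathcal{V}_1}(\omega)) = \lambda_k(\mathcal{A}(\omega))$ for $k = 1,\dots,j$.

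For the simplicity assertion, assume $\lambda_k(\mathcal{A}(\omega))$ is simple but $\lambda_k(\mathcal{A}^{\mathcal{V}_1}(\omega))$ is not. Either $\lambda_{k-1}(\mathcal{A}^{\mathcal{V}_1}(\omega))$ or $\lambda_{k+1}(\mathcal{A}^{\mathcal{V}_1}(\omega))$ equals $\lambda_k(\mathcal{A}^{\mathcal{V}_1}(\omega)) = \lambda_k(\mathcal{A}(\omega))$. The first case contradicts the already-established equality $\lambda_{k-1}(\mathcal{A}^{\mathcal{V}_1}(\omega)) = \lambda_{k-1}(\mathcal{A}(\omega))$ combined with $\lambda_{k-1}(\mathcal{A}(\omega)) > \lambda_k(\mathcal{A}(\omega))$; the second case contradicts the monotonicity $\lambda_{k+1}(\mathcal{A}^{\mathcal{V}_1}(\omega)) \leq \lambda_{k+1}(\mathcal{A}(\omega)) < \lambda_k(\mathcal{A}(\omega))$. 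Finally, since $v_k = V_1 z_k$ with $z_k$ a unit eigenvector of the simple eigenvalue $\lambda_k(\mathcal{A}^{\mathcal{V}_1}(\omega))$, the standard derivative formula from part (iii) of Lemma \ref{analytical_prop} (which applies to the $k$-th eigenvalue whenever it is simple) gives
\[
\lambda_k'(\mathcal{A}^{\mathcal{V}_1}(\omega))
\;=\;
z_k^\ast V_1^\ast \frac{d\mathcal{A}(\omega)}{d\omega} V_1 z_k
\;=\;
v_k^\ast \frac{d\mathcal{A}(\omega)}{d\omega} v_k
\;=\;
\lambda_k'(\mathcal{A}(\omega)).
\]
The main subtle point I anticipate is the simplicity argument: one has to rule out both possible coalescences with neighboring eigenvalues, and this is where part (i) does genuine work beyond supplying only an upper bound. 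The rest is essentially bookkeeping with the min-max principle and a direct coordinate calculation.
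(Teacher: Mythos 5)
Your proof is correct, and it follows the standard route (Courant--Fischer over subspaces of $\mathcal{V}_1\subseteq\mathcal{V}_2\subseteq\C^n$ for the monotonicity, then the direct computation $V_1^\ast\mathcal{A}(\omega)V_1z_i=\lambda_i(\mathcal{A}(\omega))z_i$ plus the sandwich with part (i) for the interpolation); the paper itself gives no proof here, merely citing \cite{kangal2018}, and your argument matches the one used there. The only point worth making explicit is that when the top $j$ eigenvalues are not all distinct you must take $v_1(\mathcal{A}(\omega)),\dots,v_j(\mathcal{A}(\omega))$ to be an \emph{orthonormal} set of eigenvectors (the standard convention, and what the algorithm produces), since your counting of the $j$ largest eigenvalues of $\mathcal{A}^{\mathcal{V}_1}(\omega)$ with multiplicity relies on $z_1,\dots,z_j$ being orthonormal; your handling of the simplicity claim, ruling out coalescence with both neighbors, is exactly the right care to take.
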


The subspace procedure is presented formally in Algorithm \ref{subspace_algorithm}.
At every iteration, the subspace procedure first solves a projected small-scale problem for a given subspace 
$\mathcal{V}$. Then, denoting the global minimizer of this small problem with $\omega_\ast$, the subspace is 
expanded with the inclusion of an eigenvector corresponding to $\lambda_{\max}(\mathcal{A}(\omega_\ast))$, as well 
as eigenvectors corresponding to other eigenvalues of ${\mathcal A}(\omega_\ast)$ that are at most $\epsilon$ away 
from $\lambda_{\max}(\mathcal{A}(\omega_\ast))$.  
The following interpolation result between the eigenvalues of the full and projected problems 
generated by Algorithm \ref{subspace_algorithm} is an immediate corollary of part (ii) of 
Lemma \ref{monotonicity_Hermite}.
\begin{theorem}\label{thm:Hermite_interpolate_fw}
The following are satisfied by the sequences $\{ \omega^{(k)} \}$ and $\{ {{\mathcal V}_k} \}$ 
generated by Algorithm \ref{subspace_algorithm} for each $j$, each $k = 1,\dots,j$,
each $p = 1,\dots, \ell_k$:
\begin{enumerate}
	\item[\bf (i)] $\lambda_p ({\mathcal A}^{{\mathcal V}_j}(\omega^{(k)}))  
					=
				 \lambda_p ({\mathcal A}(\omega^{(k)}))$.
	\item[\bf (ii)] if $\; \lambda_p({\mathcal A}(\omega^{(k)}))$ is simple, then the same holds for 
				$\lambda_p({\mathcal A}^{{\mathcal V}_j}(\omega^{(k)}))$, and \\
				$\lambda_p'({\mathcal A}^{{\mathcal V}_j}(\omega^{(k)})) = \lambda_p'({\mathcal A}(\omega^{(k)}))$.
\end{enumerate}
\end{theorem}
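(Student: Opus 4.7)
The plan is to reduce the statement directly to part (ii) of Lemma \ref{monotonicity_Hermite}, which gives both the value-interpolation and derivative-interpolation properties for any subspace containing the appropriate eigenvectors at a given point. So the only real content of the proof is the bookkeeping observation that, by construction, the subspace ${\mathcal V}_j$ always contains enough eigenvectors of ${\mathcal A}(\omega^{(k)})$ for every previous iterate $\omega^{(k)}$ with $k \leq j$.

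First I would record the subspace-monotonicity observation: Algorithm \ref{subspace_algorithm} expands the subspace at each iteration by appending new directions and orthonormalizing, so the sequence $\{{\mathcal V}_k\}$ is nested, ${\mathcal V}_1 \subseteq {\mathcal V}_2 \subseteq \cdots \subseteq {\mathcal V}_j \subseteq \cdots$. Next, by the construction rule of the algorithm, at iteration $k$ the subspace is enlarged to include unit eigenvectors $v_1({\mathcal A}(\omega^{(k)})), v_2({\mathcal A}(\omega^{(k)})),\dots, v_{\ell_k}({\mathcal A}(\omega^{(k)}))$ corresponding to $\lambda_{\max}({\mathcal A}(\omega^{(k)}))$ and to all eigenvalues within $\epsilon$ of $\lambda_{\max}({\mathcal A}(\omega^{(k)}))$. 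Combining these two observations, for every $k \leq j$ the subspace ${\mathcal V}_j$ contains the full list $v_1({\mathcal A}(\omega^{(k)})), \dots , v_{\ell_k}({\mathcal A}(\omega^{(k)}))$.

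With that in hand, I would apply part (ii) of Lemma \ref{monotonicity_Hermite} with ${\mathcal V}_1 \leftarrow {\mathcal V}_j$, $\omega \leftarrow \omega^{(k)}$ and $j \leftarrow \ell_k$. For each $p = 1,\dots,\ell_k$, the first bullet of that part of the lemma directly gives
\[
	\lambda_p({\mathcal A}^{{\mathcal V}_j}(\omega^{(k)})) \; = \; \lambda_p({\mathcal A}(\omega^{(k)})),
\]
which is claim (i). If moreover $\lambda_p({\mathcal A}(\omega^{(k)}))$ is simple, the second bullet of the same part of the lemma yields that $\lambda_p({\mathcal A}^{{\mathcal V}_j}(\omega^{(k)}))$ is simple and
\[
	\lambda_p'({\mathcal A}^{{\mathcal V}_j}(\omega^{(k)})) \; = \; \lambda_p'({\mathcal A}(\omega^{(k)})),
\]
which is claim (ii).

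There is essentially no hard step; the only care needed is in matching indices (making sure that the way the algorithm labels ${\mathcal V}_j$ relative to iteration $k$ is consistent with the nesting argument above) and in verifying that $\epsilon$ being the admissible gap for inclusion does not affect the argument, since the invocation of Lemma \ref{monotonicity_Hermite} only needs the listed eigenvectors to be in the subspace, regardless of the tolerance used to decide how many to include. Once the nesting ${\mathcal V}_k \subseteq {\mathcal V}_j$ for $k \leq j$ is noted, the proof collapses to a single citation of Lemma \ref{monotonicity_Hermite}(ii).
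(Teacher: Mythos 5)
Your proof is correct and matches the paper's approach exactly: the paper states this theorem as an immediate corollary of part (ii) of Lemma \ref{monotonicity_Hermite}, relying on precisely the two observations you spell out, namely the nesting ${\mathcal V}_k \subseteq {\mathcal V}_j$ for $k \leq j$ and the fact that the algorithm places $v_1({\mathcal A}(\omega^{(k)})), \dots, v_{\ell_k}({\mathcal A}(\omega^{(k)}))$ into ${\mathcal V}_k$. The only thing you add is the explicit bookkeeping, which the paper leaves implicit.
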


\begin{algorithm}[t]
	\begin{algorithmic}[1]
		\REQUIRE{Matrix-valued function $\mathcal{A}(\omega)$, closed interval $\Omega\subset\R$, 
						and $\epsilon \in {\mathbb R}^+$.}
		\ENSURE{The sequence $\{ \omega^{(k)} \}$.}
		\STATE{$\omega^{(1)}\gets$ a random point in $\Omega$}
		\STATE{$v^{(1)}_1 \gets$ eigenvector corresponding to $\lambda_{\max}(\mathcal{A}(\omega^{(1)}))$}
		\label{comp_eigvecs01}
		\STATE{$v_2^{(1)},\dots,v_{\ell_1}^{(1)}\gets$ eigenvectors corresponding to eigenvalues\\
		\hskip 5ex
		$\lambda_2({\mathcal A}(\omega^{(1)}))$, $\dots$, $\lambda_{\ell_1}({\mathcal A}(\omega^{(1)}))$ 
		of ${\mathcal A}(\omega^{(1)})$ such that \\
		\hskip 14ex $\lambda_{\max}(\mathcal{A}(\omega^{(1)})) - \lambda_j(A(\omega^{(1)}) \leq \epsilon \;\;$ for $\;\; j = 2, \dots, \ell_1$}\label{comp_eigvecs02}
		\STATE{$\mathcal{V}_1\gets$span$\left\{v^{(1)}_1,v_2^{(1)},\dots,v_{\ell_1}^{(1)}\right\}$}
		\FOR{$k=1,2,\dots$}
		\STATE{$\omega^{(k+1)}\gets\arg\min_{\omega\in\Omega}\lambda_{\max}(\mathcal{A}^{\mathcal{V}_k}(\omega))$}
		\STATE{$v^{(k+1)}_1 \gets$ eigenvector corresponding to $\lambda_{\max}(\mathcal{A}(\omega^{(k+1)}))$} \label{comp_eigvecs1}
		\STATE{$v_2^{(k+1)},\dots,v_{\ell_{k+1}}^{(k+1)}\gets$ eigenvectors corresponding to eigenvalues\\
		\hskip 4ex  
		$\lambda_2({\mathcal A}(\omega^{(k+1)}))$, $\dots$, $\lambda_{\ell_{k+1}}({\mathcal A}(\omega^{(k+1)}))$ of ${\mathcal A}(\omega^{(k+1)})$ such that \\
		\hskip 8ex 
		$\lambda_{\max}(\mathcal{A}(\omega^{(k+1)})) - \lambda_j(A(\omega^{(k+1)}) \leq \epsilon \;\;$ for $\;\;j = 2, \dots, \ell_{k+1}$}\label{comp_eigvecs2}
		\STATE{$\mathcal{V}_{k+1}\gets\mathcal{V}_k \: \oplus \: {\rm span}\left\{v^{(k+1)}_1,v_2^{(k+1)},\dots,v_{\ell_{k+1}}^{(k+1)}\right\}$}
		\ENDFOR
	\end{algorithmic}
	\caption{The Subspace Procedure}
	\label{subspace_algorithm}
\end{algorithm}

In the case of inner numerical radius, the projected small-scale 
problems can be solved globally and efficiently by means of the algorithms in Section \ref{small_scale_algorithms}. 
The support based algorithm in Section \ref{small_scale_algorithms} is applicable to solve the projected problems 
associated with various other eigenvalue optimization problems  of the form (\ref{eq:main_problem2}), as long 
as a global lower bound $\gamma$ on $\lambda_{\max}({\mathcal A}''(\omega))$ at the points of
differentiability is available.
The main computational burden of the subspace procedure stems from lines 
\ref{comp_eigvecs01}, \ref{comp_eigvecs02}, \ref{comp_eigvecs1}, \ref{comp_eigvecs2}, which require the computation 
of the eigenvectors of the full problem.

The subsequent two subsections are devoted to analyses of the convergence properties of 
Algorithm \ref{subspace_algorithm}. The next subsection provides formal arguments in support 
of the fact that every convergent subsequence of the sequence $\{ \omega^{(k)} \}$ by the 
algorithm converges to a global minimizer of $\lambda_{\max}({\mathcal A}(\omega))$. Then 
Section \ref{sec:subspace_rate-of-convergence} addresses how quickly this convergence occurs.

\subsection{Global Convergence}
For the sake of rigor, here and in the next section, we consider the problem at our hands in the infinite 
dimensional setting. In particular, in this analysis the matrix-valued functions are replaced by self-adjoint 
compact operators $\mathcal{A}(\omega):\ell^2(\N)\rightarrow\ell^2(\N)$, where $\ell^2(\N)$ denotes the 
Hilbert space consisting of square summable infinite sequences of complex numbers equipped with 
the inner product $\langle v,w\rangle=\sum_{k=1}^{\infty}v_k w_k$ and the norm 
$\norm{v}=\sqrt{\sum_{k=1}^{\infty} |v_k|^2}$. The compact self-adjoint operator $\mathcal{A}(\omega)$ 
dependent on the parameter $\omega$ is still assumed to be of the form specified in (\ref{eq:main_problem2}); 
only now $A_j:\ell^2(\N)\rightarrow\ell(\N)$ are self-adjoint compact operators for $j=1,\dots,d$. Intuitively 
$\mathcal{A}(\omega)$ for each $\omega$, as well as $A_1,\dots,A_d$, can be considered as infinite 
dimensional Hermitian matrices. 

The global convergence of the subspace procedure is a consequence of the monotonicity and 
interpolation properties, as well as the uniform Lipschitz continuity of the reduced eigenvalue function 
formally stated below. 
\begin{lemma}[Uniform Lipschitz Continuity \cite{kangal2018}]\label{thm:Lip_cont}
	There exists a positive real number $\eta$ such that for a prescribed $J \in {\mathbb Z}^+$ for each $j = 1,\dots,J$
	and for all subspaces $\mathcal{V}$ of $\ell^2(\N)$ the 
	following holds:
	\begin{eqnarray*}
		\abs{\lambda_j(\mathcal{A}(\omega_1)) - 
				\lambda_j(\mathcal{A}(\omega_2))} 
						\leq 
			\eta	\abs{\omega_1 -\omega_2} \quad \forall \omega_1,\omega_2\in\Omega, \;\; {\rm and} \\
		\abs{\lambda_j(\mathcal{A}^{\mathcal{V}}(\omega_1)) - 
				\lambda_j(\mathcal{A}^{\mathcal{V}}(\omega_2))} 
						\leq 
			\eta	\abs{\omega_1 -\omega_2} \quad \forall \omega_1,\omega_2\in\Omega.
	\end{eqnarray*}
\end{lemma}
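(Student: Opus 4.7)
The plan is to reduce both inequalities to an operator-norm bound on the difference $\mathcal{A}(\omega_1)-\mathcal{A}(\omega_2)$ through a Weyl-type perturbation estimate, and then to exploit the structure $\mathcal{A}(\omega)=\sum_{j=1}^{\kappa} f_j(\omega)A_j$ together with the regularity of the $f_j$'s and the boundedness of the $A_j$'s. The main point is that, once phrased in terms of operator norms, the same constant $\eta$ will serve uniformly in $j\in\{1,\dots,J\}$ and in the subspace $\mathcal{V}$.

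First, I would invoke the Weyl inequality for compact self-adjoint operators: the $j$th largest eigenvalue, characterized by the Courant--Fischer min-max formula, satisfies
\[
    \left|\lambda_j(T_1)-\lambda_j(T_2)\right| \;\leq\; \|T_1-T_2\|
\]
for any two compact self-adjoint operators $T_1,T_2$ on $\ell^2(\mathbb{N})$, uniformly in $j$. Applying this with $T_i=\mathcal{A}(\omega_i)$ yields the first bound modulo an estimate on $\|\mathcal{A}(\omega_1)-\mathcal{A}(\omega_2)\|$. Here I would use the triangle inequality
\[
    \|\mathcal{A}(\omega_1)-\mathcal{A}(\omega_2)\|
        \;\leq\; \sum_{j=1}^{\kappa}\bigl|f_j(\omega_1)-f_j(\omega_2)\bigr|\cdot\|A_j\|,
\]
and note that each $f_j$ is real analytic on the open interval $\underline{\Omega}\supseteq\Omega$, hence continuously differentiable on the compact set $\Omega$, so Lipschitz on $\Omega$ with some constant $L_j$. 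Setting $\eta:=\sum_{j=1}^{\kappa} L_j\,\|A_j\|$ therefore gives the first inequality.

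For the projected operators, let $V$ denote an isometric embedding whose range equals $\mathcal{V}$, so that $\mathcal{A}^{\mathcal{V}}(\omega)=V^{\ast}\mathcal{A}(\omega)V$. Because $V$ is an isometry, $\|V^{\ast}MV\|\leq\|M\|$ for every bounded $M$. Consequently
\[
    \left\|\mathcal{A}^{\mathcal{V}}(\omega_1)-\mathcal{A}^{\mathcal{V}}(\omega_2)\right\|
        \;=\; \left\|V^{\ast}\!\bigl(\mathcal{A}(\omega_1)-\mathcal{A}(\omega_2)\bigr)V\right\|
        \;\leq\; \|\mathcal{A}(\omega_1)-\mathcal{A}(\omega_2)\|
        \;\leq\; \eta\,|\omega_1-\omega_2|,
\]
and a second application of Weyl's inequality to the compact self-adjoint operator $\mathcal{A}^{\mathcal{V}}(\omega)$ delivers the projected bound with the same $\eta$. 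Crucially, the constant does not depend on $\mathcal{V}$, nor on the index $j$ (it does not even depend on $J$), which is exactly the uniformity asserted in the lemma.

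The only delicate point I anticipate is justifying that the Lipschitz constants $L_j$ of the analytic functions $f_j$ are finite on $\Omega$; this requires $\Omega$ to be compact, which is the case since the problem statement takes $\Omega$ to be a closed (and, in the applications of interest, bounded) interval contained in the open interval $\underline{\Omega}$ of analyticity. Everything else is routine application of Weyl's inequality and the isometric invariance of the operator norm under compressions.
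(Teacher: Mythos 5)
Your argument is correct and is the standard one: Weyl's perturbation bound for the min-max eigenvalues, Lipschitz continuity of the real-analytic coefficients $f_j$ on the compact interval $\Omega$, and the contraction property $\|V^{\ast}MV\|\leq\|M\|$ of isometric compressions, yielding a single constant $\eta$ independent of $j$ and of $\mathcal{V}$. The paper itself gives no proof and simply cites \cite{kangal2018}, where the lemma is established along essentially these same lines, so your proposal matches the intended argument.
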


Now let $\left\{\omega^{(\ell_{k})}\right\}$ be a convergent subsequence of $\left\{\omega^{(k)}\right\}$.
The interpolation property (part (i) of Theorem \ref{thm:Hermite_interpolate_fw}) implies
	\begin{equation}\label{inter}
	   	\min_{\omega\in\Omega} \: \lambda_{\max}(\mathcal{A}(\omega)) 
				\; \leq \;
		\lambda_{\max}(\mathcal{A}(\omega^{\ell_k}))
				\; = \;
		\lambda_{\max}(\mathcal{A}^{\mathcal{V}_{\ell_k}}(\omega^{\ell_{k}})),
	\end{equation}
while the monotonicity property (part (i) of Lemma \ref{monotonicity_Hermite}) implies
	\begin{equation}\label{mon}
		\begin{split}
		\min_{\omega\in\Omega} \: \lambda_{\max}(\mathcal{A}(\omega))
				&\; \geq \;
		\min_{\omega\in\Omega} \: \lambda_{\max}(\mathcal{A}^{\mathcal{V}_{\ell_{k+1}-1}}(\omega)) \\
				&\; = \;
		\lambda_{\max}(\mathcal{A}^{\mathcal{V}_{\ell_{k+1}-1}}(\omega^{(\ell_{k+1})}))
				\; \geq \;
		\lambda_{\max}(\mathcal{A}^{\mathcal{V}_{\ell_{k}}}(\omega^{(\ell_{k+1})})).
		\end{split}
	\end{equation}
Hence, $\min_{\omega\in\Omega} \: \lambda_{\max}(\mathcal{A}(\omega))$ is squeezed in between
$\lambda_{\max}(\mathcal{A}^{\mathcal{V}_{\ell_k}}(\omega^{\ell_{k}}))$ and 
$\lambda_{\max}(\mathcal{A}^{\mathcal{V}_{\ell_{k}}}(\omega^{(\ell_{k+1})}))$, the gap between
which is decaying to zero as $k \rightarrow \infty$ due to uniform Lipschitz continuity
(Lemma \ref{thm:Lip_cont}). This leads to the following global convergence result. 
The details of the proof are omitted, as the proof is identical to the one for \cite[Theorem 3.1]{kangal2018}.

\begin{theorem}[Global Convergence]\label{global_convergence}
	Every convergent subsequence of the sequence  $\left\{\omega^{(k)}\right\}$ generated by Algorithm \ref*{subspace_algorithm} 
	in the infinite dimensional setting converges to a global minimizer of $\lambda_{\max}(\mathcal{A}(\omega))$ over 
	$\omega\in\Omega$. Moreover,
	\begin{equation}\label{eq:glob_conv_p2}
		\lim_{k\rightarrow\infty}\lambda_{\max}(\mathcal{A}^{\mathcal{V}_k}(\omega^{(k+1)}))
			 =  \lim_{k\rightarrow\infty}\min_{\omega\in\Omega}\: \lambda_{\max}(\mathcal{A}^{\mathcal{V}_k}(\omega))
			 =  \min_{\omega\in\Omega} \: \lambda_{\max}(\mathcal{A}(\omega)).
	\end{equation}
\end{theorem}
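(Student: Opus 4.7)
The plan is to mimic exactly the sandwich argument already sketched in the paragraph preceding the theorem, filling in the details of how each end of the sandwich is shown to converge to $\min_{\omega\in\Omega}\lambda_{\max}(\mathcal{A}(\omega))$, and then reading off both conclusions of the theorem from that sandwich. I would start by fixing any convergent subsequence $\{\omega^{(\ell_k)}\}$ (which exists by compactness of $\Omega$) with limit $\omega_\ast$, and aim to show $\omega_\ast$ is a global minimizer; the second displayed equation \eqref{eq:glob_conv_p2} will then fall out as a byproduct.

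The key sandwich is
\[
\lambda_{\max}(\mathcal{A}^{\mathcal{V}_{\ell_k}}(\omega^{(\ell_{k+1})}))
\;\leq\; \min_{\omega\in\Omega}\lambda_{\max}(\mathcal{A}(\omega))
\;\leq\; \lambda_{\max}(\mathcal{A}^{\mathcal{V}_{\ell_k}}(\omega^{(\ell_k)})).
\]
The upper inequality is (\ref{inter}): $\min_{\omega}\lambda_{\max}(\mathcal{A}(\omega))\leq\lambda_{\max}(\mathcal{A}(\omega^{(\ell_k)}))$, combined with the Hermite interpolation identity $\lambda_{\max}(\mathcal{A}(\omega^{(\ell_k)}))=\lambda_{\max}(\mathcal{A}^{\mathcal{V}_{\ell_k}}(\omega^{(\ell_k)}))$ from Theorem~\ref{thm:Hermite_interpolate_fw}(i), which applies because $\mathcal{V}_{\ell_k}$ contains the leading eigenvector $v_1^{(\ell_k)}$ added in line~\ref{comp_eigvecs1}. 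The lower inequality is (\ref{mon}): by monotonicity (Lemma~\ref{monotonicity_Hermite}(i)), $\min_{\omega}\lambda_{\max}(\mathcal{A}(\omega))$ dominates $\min_{\omega}\lambda_{\max}(\mathcal{A}^{\mathcal{V}_{\ell_{k+1}-1}}(\omega))$, which equals $\lambda_{\max}(\mathcal{A}^{\mathcal{V}_{\ell_{k+1}-1}}(\omega^{(\ell_{k+1})}))$ by the definition of $\omega^{(\ell_{k+1})}$, and this in turn dominates $\lambda_{\max}(\mathcal{A}^{\mathcal{V}_{\ell_k}}(\omega^{(\ell_{k+1})}))$ again by monotonicity since $\mathcal{V}_{\ell_k}\subseteq\mathcal{V}_{\ell_{k+1}-1}$.

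The main technical point is that the gap between the two ends of the sandwich tends to zero. Here I invoke Lemma~\ref{thm:Lip_cont}, which is crucial because it gives a Lipschitz constant $\eta$ that is uniform in the subspace $\mathcal{V}$; this yields
\[
0\;\leq\;\lambda_{\max}(\mathcal{A}^{\mathcal{V}_{\ell_k}}(\omega^{(\ell_k)}))-\lambda_{\max}(\mathcal{A}^{\mathcal{V}_{\ell_k}}(\omega^{(\ell_{k+1})}))\;\leq\;\eta\,|\omega^{(\ell_k)}-\omega^{(\ell_{k+1})}|.
\]
Since $\omega^{(\ell_k)}$ and $\omega^{(\ell_{k+1})}$ are two consecutive terms of the convergent subsequence, both tend to $\omega_\ast$, so the right-hand side vanishes. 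This is the step I regard as the main hinge of the proof: without the uniform-in-$\mathcal{V}$ Lipschitz bound of Lemma~\ref{thm:Lip_cont} one could not control the gap, since the reduced operators $\mathcal{A}^{\mathcal{V}_{\ell_k}}$ change with $k$.

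To finish, I let $k\to\infty$ in the sandwich. The upper end $\lambda_{\max}(\mathcal{A}^{\mathcal{V}_{\ell_k}}(\omega^{(\ell_k)}))=\lambda_{\max}(\mathcal{A}(\omega^{(\ell_k)}))$ tends to $\lambda_{\max}(\mathcal{A}(\omega_\ast))$ by continuity of $\lambda_{\max}\circ\mathcal{A}$ (which itself follows from Lemma~\ref{thm:Lip_cont}), and, by the gap estimate, so does the lower end. Thus both are squeezed to $\min_{\omega\in\Omega}\lambda_{\max}(\mathcal{A}(\omega))$, giving $\lambda_{\max}(\mathcal{A}(\omega_\ast))=\min_{\omega\in\Omega}\lambda_{\max}(\mathcal{A}(\omega))$, i.e.\ $\omega_\ast$ is a global minimizer. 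The two limits in \eqref{eq:glob_conv_p2} are immediate from the same sandwich: the middle expression $\min_{\omega}\lambda_{\max}(\mathcal{A}^{\mathcal{V}_k}(\omega))$ equals $\lambda_{\max}(\mathcal{A}^{\mathcal{V}_k}(\omega^{(k+1)}))$ by definition of $\omega^{(k+1)}$, is monotone nondecreasing in $k$ (by Lemma~\ref{monotonicity_Hermite}(i)) and bounded above by $\min_{\omega}\lambda_{\max}(\mathcal{A}(\omega))$, so its limit exists; the subsequential sandwich just established forces that limit to equal $\min_{\omega}\lambda_{\max}(\mathcal{A}(\omega))$.
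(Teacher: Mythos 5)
Your proof is correct and follows essentially the same route the paper takes: the interpolation inequality \eqref{inter}, the monotonicity chain \eqref{mon}, and the uniform-in-subspace Lipschitz bound of Lemma \ref{thm:Lip_cont} to close the gap, exactly as in the sketch preceding the theorem (the paper defers the full details to the proof of Theorem 3.1 of \cite{kangal2018}). Your added observations — that the sequence $\min_{\omega}\lambda_{\max}(\mathcal{A}^{\mathcal{V}_k}(\omega))$ is monotone and bounded so its full limit exists, and that the subsequential squeeze then pins down that limit — correctly fill in the step needed to pass from the subsequence to the full limits in \eqref{eq:glob_conv_p2}.
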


\subsection{Rate-of-Convergence}\label{sec:subspace_rate-of-convergence}
In this section, as has been done earlier for the rate-of-convergence analysis of the support based 
algorithm, we again assume $\lambda_{\max}(\mathcal{A}(\omega))$ has a unique global
minimizer over $\Omega$, say at $\omega_\ast$. It turns out that Theorem \ref{convergence_omega}
that is established for the support based algorithm applies to this setting as well;
this is a consequence of Theorem \ref{global_convergence}, in particular its assertion that every convergent 
subsequence of the sequence $\{ \omega^{(k)} \}$ by the subspace framework is guaranteed to 
converge to a global minimizer. Hence, the sequence $\{ \omega^{(k)} \}$ itself converges 
to the unique global minimizer $\omega_\ast = \arg\min_{\omega\in\Omega}\lambda_{\max}(\mathcal{A}(\omega))$.

Here we are concerned with how quickly $\{ \omega^{(k)} \}$ by
Algorithm \ref{subspace_algorithm} converges to $\omega_\ast$. 
If the eigenvalue $\lambda_{\max}(\mathcal{A}(\omega_\ast))$ is simple,
then the eigenvalue function $\lambda_{\max}(\mathcal{A}(\omega))$ is real analytic at $\omega_\ast$.
In this case, it has been shown in \cite{Kressner2017} that the precise R-order of convergence 
of $\{ \omega^{(k)} \}$ to $\omega_\ast$ is $1+\sqrt{2}$, i.e., there exists a sequence
$\{ \varepsilon^{(k)} \}$ converging to zero such that $| \omega^{(k)} - \omega_\ast | \leq \varepsilon^{(k)}$ 
for all $k$ large enough and $ \varepsilon^{(k+1)} = O((\varepsilon^{(k)})^{1 + \sqrt{2}})$.

Otherwise, $\lambda_{\max}(\mathcal{A}(\omega_\ast))$ is multiple with algebraic multiplicity $K \geq 2$.
Throughout this section, it is assumed that 
\begin{equation}\label{eq:assumption_gen_der}
	0 \notin {\rm bd} \: \partial \lambda_j({\mathcal A}(\omega_\ast)),	\quad	{\rm for} \;\; j = 1,\dots,K,
\end{equation}
where $\lambda_j({\mathcal A}(\omega))$ denotes the $j$th largest eigenvalue of ${\mathcal A}(\omega)$.
This assumption is equivalent to having non-zero one-sided derivatives of
$\lambda_{j}({\mathcal A}(\omega))$ at $\omega_\ast$ for $j = 1,\dots, K$,
and holds generically. As already discussed in Section \ref{sec:support_rate_convergence}, 
$\lambda_{\max}(\mathcal{A}(\omega))$ is continuous and piecewise real analytic at $\omega_\ast$,
but usually not differentiable. Furthermore, 
part (i) of Lemma \ref{analytical_prop} asserts the existence of real analytic eigenvalue functions
$\widetilde{\lambda}_1({\mathcal A}(\omega)), \dots, \widetilde{\lambda}_n({\mathcal A}(\omega))$ 
of ${\mathcal A}(\omega)$. Precisely $K$ of these $n$ real analytic functions at $\omega_\ast$
must be equal to $\lambda_{\max}({\mathcal A}(\omega_\ast))$ with nonzero derivatives;
a zero derivative for one of these $K$ functions at $\omega_\ast$ contradicts (\ref{eq:assumption_gen_der}).
Hence, without loss of generality, suppose
\begin{equation*}
		\widetilde{\lambda}_{1}({\mathcal A}(\omega_\ast)) \;\; = \;\; \dots \;\; = \;\;  \widetilde{\lambda}_{K}({\mathcal A}(\omega_\ast)) 	\;\; = \;\; 
		\lambda_{\max}({\mathcal A}(\omega_\ast))
\end{equation*}
are such that
\begin{equation*}
		\widetilde{\lambda}'_{1}({\mathcal A}(\omega_\ast)), \dots ,  
									\widetilde{\lambda}'_{P}({\mathcal A}(\omega_\ast))  >  0,	\quad \quad
		\widetilde{\lambda}'_{P+1}({\mathcal A}(\omega_\ast)), \dots ,  \widetilde{\lambda}'_{K}({\mathcal A}(\omega_\ast))  < 0.
\end{equation*}
Our arguments make use of the gap
\begin{equation}\label{eq:eig_gap}
	\begin{split}
	\varphi \;\; & := \;\; \lambda_{\max}({\mathcal A}(\omega_\ast)) \;\; - \;\;
				\lambda_{K+1} ({\mathcal A}(\omega_\ast)) \\
		\;\; & = \;\;	\lambda_{\max}({\mathcal A}(\omega_\ast)) \;\; - \;\;
				\max \left\{ \widetilde{\lambda}_j ({\mathcal A}(\omega_\ast)) \; \big| 
							\; j = K + 1, \dots, n \right\}
	\end{split}
\end{equation}
as well as the real analytic eigenvalue functions of ${\mathcal A}^{{\mathcal V}_k}(\omega)$  which we denote with 
$\widetilde{\lambda}_1({\mathcal A}^{{\mathcal V}_k}(\omega)), \dots, \widetilde{\lambda}_{d_k}({\mathcal A}^{{\mathcal V}_k}(\omega))$,
where $d_k := \dim \: {\mathcal V}_k$. 

The Hermite interpolation property extends to $\widetilde{\lambda}_j({\mathcal A}(\omega))$,
$\widetilde{\lambda}_j({\mathcal A}^{{\mathcal V}_k}(\omega))$ at the iterates 
$\omega^{(k)}$ of Algorithm \ref{subspace_algorithm} for large $k$ in the way stated by 
Lemma \ref{thm:extended_Hermite_interpolate} below.
This result immediately follows from part (ii) of Lemma \ref{monotonicity_Hermite}, as
the set $\{ \widetilde{\lambda}_j(\mathcal{A}(\omega)) \; | \; j  = 1,\dots, K \}$ 
corresponds to the set of largest $K$ eigenvalues of ${\mathcal A}(\omega)$ for all $\omega$ 
in an open interval ${\mathcal I}$ containing $\omega_\ast$. Furthermore, $\omega^{(k)} \in {\mathcal I}$ 
for large $k$, and the eigenvectors corresponding to $\widetilde{\lambda}_j(\mathcal{A}(\omega^{(k)}))$ 
for $j = 1,\dots,K$ are included in the subspaces.
\begin{lemma}\label{thm:extended_Hermite_interpolate}
Let $\{ \omega^{(k)} \}$ be the sequence generated by Algorithm \ref{subspace_algorithm},
and $\omega_\ast$ denote the unique global minimizer of $\lambda_{\max}({\mathcal A}(\omega))$
over $\Omega$ such that $\lambda_{\max}({\mathcal A}(\omega_\ast))$ has algebraic
multiplicity $K \geq 2$. For each $k \in {\mathbb Z}^+$ large enough, there exist 
$\ell_1, \dots, \ell_K \in \{1, \dots,$ $\dim \: {\mathcal V}_k \}$ satisfying 
\[
	 \widetilde{\lambda}_j({\mathcal A} (\omega^{(k)}))	
					=
				\widetilde{\lambda}_{\ell_j}({\mathcal A}^{{\mathcal V}_k} (\omega^{(k)})),
			\quad \widetilde{\lambda}'_j({\mathcal A} (\omega^{(k)}))	
					=
				\widetilde{\lambda}'_{\ell_j}({\mathcal A}^{{\mathcal V}_k} (\omega^{(k)})),
\]
for $j = 1,\dots, K$.
\end{lemma}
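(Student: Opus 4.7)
The plan is to observe that once $\omega^{(k)}$ has entered an appropriate neighborhood of $\omega_\ast$, the subspace $\mathcal{V}_k$ contains the analytic eigenvectors $\widetilde{v}_j(\mathcal{A}(\omega^{(k)}))$ associated with the branches $\widetilde{\lambda}_j$ for $j = 1, \ldots, K$, at which point Lemma~\ref{monotonicity_Hermite}(ii) does essentially all the work. First I would invoke $\omega^{(k)} \to \omega_\ast$, which (as remarked just before this lemma) follows from global convergence and uniqueness of $\omega_\ast$ by the same contradiction argument as in the proof of Theorem~\ref{convergence_omega}. Then, using the spectral gap $\varphi > 0$ from (\ref{eq:eig_gap}) together with continuity of the eigenvalues of $\mathcal{A}(\omega)$, I would fix an open interval $\mathcal{I} \ni \omega_\ast$ on which the analytic branches $\widetilde{\lambda}_1(\mathcal{A}(\omega)), \ldots, \widetilde{\lambda}_K(\mathcal{A}(\omega))$ constitute, as a set, the $K$ largest eigenvalues of $\mathcal{A}(\omega)$ and remain separated from the remaining eigenvalues by at least $\varphi/2$. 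For $k$ large, $\omega^{(k)} \in \mathcal{I}$ and continuity forces $\lambda_{\max}(\mathcal{A}(\omega^{(k)})) - \widetilde{\lambda}_j(\mathcal{A}(\omega^{(k)})) \to 0$ for $j = 1, \ldots, K$, so each such gap eventually drops below the tolerance $\epsilon$. Consequently, lines~\ref{comp_eigvecs1}--\ref{comp_eigvecs2} of Algorithm~\ref{subspace_algorithm} guarantee $\widetilde{v}_j(\mathcal{A}(\omega^{(k)})) \in \mathcal{V}_k$ for $j = 1, \ldots, K$.

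With this inclusion in hand, Lemma~\ref{monotonicity_Hermite}(ii) yields coincidence of the top $K$ ordered eigenvalues of $\mathcal{A}(\omega^{(k)})$ and $\mathcal{A}^{\mathcal{V}_k}(\omega^{(k)})$. To produce matching analytic indices $\ell_j$, for each $j$ I would set $\widehat{u}_j := V^\ast \widetilde{v}_j(\mathcal{A}(\omega^{(k)}))$, where the columns of $V$ form an orthonormal basis of $\mathcal{V}_k$. Since $VV^\ast \widetilde{v}_j = \widetilde{v}_j$, the vector $\widehat{u}_j$ is a unit eigenvector of $\mathcal{A}^{\mathcal{V}_k}(\omega^{(k)})$ with eigenvalue $\widetilde{\lambda}_j(\mathcal{A}(\omega^{(k)}))$, as
\[
\mathcal{A}^{\mathcal{V}_k}(\omega^{(k)}) \widehat{u}_j
= V^\ast \mathcal{A}(\omega^{(k)}) VV^\ast \widetilde{v}_j
= V^\ast \mathcal{A}(\omega^{(k)}) \widetilde{v}_j
= \widetilde{\lambda}_j(\mathcal{A}(\omega^{(k)})) \widehat{u}_j.
\]
Invoking Lemma~\ref{analytical_prop}(i) on $\mathcal{A}^{\mathcal{V}_k}(\omega)$, I would select an analytic parametrization of its eigenpairs whose eigenvectors at $\omega^{(k)}$ include $\widehat{u}_1, \ldots, \widehat{u}_K$, and let $\ell_j$ be the index of the branch having $\widehat{u}_j$ as its eigenvector at $\omega^{(k)}$. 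The eigenvalue equality is then immediate, and the derivative equality would follow from the analytic identity $\mu'(\omega) = u(\omega)^\ast M'(\omega) u(\omega)$, valid for every analytic eigenpair of a Hermitian analytic family $M(\omega)$ (obtained by differentiating $Mu = \mu u$ and taking an inner product with $u$, with no simplicity assumption), applied on both sides and combined once more with $VV^\ast \widetilde{v}_j = \widetilde{v}_j$, giving
\[
\widetilde{\lambda}_{\ell_j}'(\mathcal{A}^{\mathcal{V}_k}(\omega^{(k)}))
= \widehat{u}_j^\ast V^\ast \mathcal{A}'(\omega^{(k)}) V \widehat{u}_j
= \widetilde{v}_j^\ast \mathcal{A}'(\omega^{(k)}) \widetilde{v}_j
= \widetilde{\lambda}_j'(\mathcal{A}(\omega^{(k)})).
\]

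The delicate point, and the one I expect to cost the most, is the branch-selection above when $\mathcal{A}^{\mathcal{V}_k}(\omega^{(k)})$ has an eigenvalue of multiplicity greater than one: this can easily happen because the $K$ top eigenvalues all coalesce at $\omega_\ast$ and may well coincide at nearby iterates $\omega^{(k)}$. One has to argue that the freedom in the analytic parametrization afforded by Lemma~\ref{analytical_prop}(i) amounts to unitary changes of basis inside each degenerate eigenspace, and is therefore rich enough to accommodate the prescribed basis $\{\widehat{u}_j\}$ inherited from the full problem. This is standard in analytic perturbation theory, but deserves to be spelled out explicitly rather than quoted.
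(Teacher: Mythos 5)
Your proposal follows the same route as the paper, which disposes of this lemma in two sentences: for large $k$ the gap $\varphi$ and the tolerance $\epsilon$ force the eigenvectors associated with $\widetilde{\lambda}_1,\dots,\widetilde{\lambda}_K$ into ${\mathcal V}_k$, and then part (ii) of Lemma \ref{monotonicity_Hermite} delivers the interpolation. Your first paragraph and the computation $\mathcal{A}^{\mathcal{V}_k}(\omega^{(k)})\widehat{u}_j=\widetilde{\lambda}_j({\mathcal A}(\omega^{(k)}))\widehat{u}_j$ with $\widehat{u}_j=V^\ast\widetilde{v}_j$ are correct, and you are right that the derivative claim for the \emph{analytic branches} is not literally covered by Lemma \ref{monotonicity_Hermite}(ii) when several of the top $K$ eigenvalues coalesce at $\omega^{(k)}$; the paper glosses over this, so your extra care is warranted rather than redundant.

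The one flawed step is your proposed resolution of that delicate point. The freedom in an analytic parametrization of eigenpairs is \emph{not} rich enough to accommodate an arbitrary orthonormal basis of a degenerate eigenspace: at an isolated crossing of non-identical branches, first-order perturbation theory forces the analytic eigenvectors to diagonalize the compression $P\,(\mathcal{A}^{\mathcal{V}_k})'(\omega^{(k)})\,P$ to the eigenspace, so they are essentially rigid, and you cannot simply declare $\widehat{u}_j$ to be the eigenvector of some branch. The correct argument is that the $\widehat{u}_j$ happen to be exactly these rigid vectors. Concretely: if $\mu$ is attained by branches $j_1,\dots,j_m$ of the full problem with total multiplicity $m$, then monotonicity plus interpolation (Lemma \ref{monotonicity_Hermite}(i)--(ii)) give $\lambda_{p}(\mathcal{A}^{\mathcal{V}_k}(\omega^{(k)}))\le\lambda_{p}(\mathcal{A}(\omega^{(k)}))<\mu$ for the first position $p$ below the $\mu$-block, so $\mu$ has multiplicity exactly $m$ in the reduced problem and its eigenspace is precisely $V^\ast E$, $E={\rm span}\{\widetilde{v}_{j_1},\dots,\widetilde{v}_{j_m}\}$. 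In the orthonormal basis $\{\widehat{u}_{j_i}\}$ the compression of $(\mathcal{A}^{\mathcal{V}_k})'(\omega^{(k)})$ to this eigenspace has entries $\widehat{u}_{j_i}^\ast V^\ast\mathcal{A}'(\omega^{(k)})V\widehat{u}_{j_{i'}}=\widetilde{v}_{j_i}^\ast\mathcal{A}'(\omega^{(k)})\widetilde{v}_{j_{i'}}$, i.e.\ it coincides with the compression of $\mathcal{A}'(\omega^{(k)})$ to $E$ in the basis $\{\widetilde{v}_{j_i}\}$, which is diagonal with entries $\widetilde{\lambda}'_{j_i}({\mathcal A}(\omega^{(k)}))$. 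Hence the $m$ analytic branches of the reduced problem through $\mu$ have exactly these derivatives, which is what the lemma asserts. So your identity $\widehat{u}_j^\ast V^\ast\mathcal{A}'V\widehat{u}_j=\widetilde{v}_j^\ast\mathcal{A}'\widetilde{v}_j$ is the right computation; only the justification for why it computes the derivative of an actual branch needs to be replaced by this rigidity argument.
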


It is a matter of convention how we label the real analytic eigenvalue functions of ${\mathcal A}^{{\mathcal V}_k}(\omega)$. 
For ease of notation, from here on, we relabel if necessary so that $\ell_j = j$ for $j = 1,\dots,K$ in Lemma \ref{thm:extended_Hermite_interpolate}.
The next lemma gives a description of $\omega^{(k+1)}$, the global minimizer of 
$\lambda_{\max}({\mathcal A}^{{\mathcal V}_k}(\omega))$, in terms of the real analytic
eigenvalues $\widetilde{\lambda}_1({\mathcal A}^{{\mathcal V}_k}(\omega)), \dots, 
				\widetilde{\lambda}_{K}({\mathcal A}^{{\mathcal V}_k}(\omega))$.

\begin{lemma}\label{intersection}
	Suppose that $\lambda_{\max}(\mathcal{A}(\omega))$ has a unique global minimizer
	over $\Omega$ at $\omega_\ast$. Furthermore, suppose $\lambda_{\max}({\mathcal A}(\omega_\ast))$ 
	has algebraic multiplicity $K \geq 2$, and that (\ref{eq:assumption_gen_der}) holds. The following assertions are 
	satisfied for all $k$ large enough:
	\begin{enumerate}
		\item [\bf (i)] Letting $\delta_j := \widetilde{\lambda}_j'(\mathcal{A}(\omega_\ast))>0$ for $j = 1,\dots,P$,
		there exists $\varepsilon_j  >0$ such that for all 
		$\omega \in (\omega_\ast-\varepsilon_j , \omega_\ast+\varepsilon_j)$
		we have 
		\[
			\widetilde{\lambda}_j'(\mathcal{A}(\omega)) \geq  3\delta_j/4	\quad	{\rm and}	\quad
			\widetilde{\lambda}_j'(\mathcal{A}^{\mathcal{V}_k}(\omega)) \geq \delta_j/2.
		\]
		\item [\bf (ii)] Letting $\delta_j := \widetilde{\lambda}_j'(\mathcal{A}(\omega_\ast)) < 0$ for $j = P+1,\dots,K$,
		there exists $\varepsilon_j  >0$ such that for all 
		$\omega \in (\omega_\ast-\varepsilon_j , \omega_\ast+\varepsilon_j)$
		we have 
		\[
			\widetilde{\lambda}_j'(\mathcal{A}(\omega)) \leq  3\delta_j/4	\quad	{\rm and}	\quad
			\widetilde{\lambda}_j'(\mathcal{A}^{\mathcal{V}_k}(\omega)) \leq \delta_j/2.
		\]
		\item[\bf (iii)] The point $\omega^{(k+1)}$ is such that 
			\[
				\lambda_{\max}(\mathcal{A}^{\mathcal{V}_k}(\omega^{(k+1)})) \;\; = \;\;
				\widetilde{\lambda}_{j_1}(\mathcal{A}^{\mathcal{V}_k}(\omega^{(k+1)})) \;\; = \;\;
						\widetilde{\lambda}_{j_2}(\mathcal{A}^{\mathcal{V}_k}(\omega^{(k+1)}))
			\]
			for some $j_1 \in \{ 1, \dots, P \}$ and some $j_2 \in \{ P+1, \dots, K \}$.
	\end{enumerate}
\end{lemma}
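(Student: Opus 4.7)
The bound on $\widetilde{\lambda}_j'(\mathcal{A}(\omega))$ is immediate: since $\widetilde{\lambda}_j({\mathcal A}(\omega))$ is real analytic at $\omega_\ast$, its derivative is continuous there with value $\delta_j$, so for any sufficiently small $\varepsilon_j'>0$ we have $\widetilde{\lambda}_j'(\mathcal{A}(\omega)) \geq 3\delta_j/4$ on $(\omega_\ast-\varepsilon_j', \omega_\ast+\varepsilon_j')$ when $\delta_j>0$, and the symmetric statement when $\delta_j<0$. For the projected eigenvalue, I would invoke Lemma \ref{thm:extended_Hermite_interpolate}: for large $k$, the Hermite interpolation property gives $\widetilde{\lambda}_j'(\mathcal{A}^{\mathcal{V}_k}(\omega^{(k)})) = \widetilde{\lambda}_j'(\mathcal{A}(\omega^{(k)}))$, which tends to $\delta_j$ because $\omega^{(k)} \rightarrow \omega_\ast$ by global convergence and the uniqueness of $\omega_\ast$. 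To propagate this estimate from $\omega^{(k)}$ to a fixed neighborhood of $\omega_\ast$ uniformly in $k$, I would exploit the spectral gap $\varphi>0$ defined in \eqref{eq:eig_gap}: the $K$-dimensional invariant subspace of $\mathcal{A}(\omega)$ corresponding to its top $K$ eigenvalues depends analytically on $\omega$ in a neighborhood of $\omega_\ast$, and by monotonicity and interpolation the analogous invariant subspaces of $\mathcal{A}^{\mathcal{V}_k}(\omega)$ approximate it to arbitrary accuracy as $k$ grows. Restricted to this analytic block, the real analytic eigenvalue functions (and their derivatives) of $\mathcal{A}^{\mathcal{V}_k}(\omega)$ converge uniformly to those of $\mathcal{A}(\omega)$ on a compact neighborhood of $\omega_\ast$, yielding a uniform $\varepsilon_j$ with the required lower bound $\delta_j/2$.

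\textbf{Plan for part (iii).} I would apply a first-order optimality argument at $\omega^{(k+1)}$, the minimizer of $\lambda_{\max}(\mathcal{A}^{\mathcal{V}_k}(\omega))$. By Theorem \ref{global_convergence}, $\omega^{(k+1)} \rightarrow \omega_\ast$, which I assume lies in the interior of $\Omega$; hence for large $k$ the optimality condition $0 \in \partial \lambda_{\max}(\mathcal{A}^{\mathcal{V}_k}(\cdot))\big|_{\omega^{(k+1)}}$ holds. The key observation is that for large $k$, only real analytic eigenvalue functions among $\widetilde{\lambda}_1(\mathcal{A}^{\mathcal{V}_k}(\omega)), \dots, \widetilde{\lambda}_K(\mathcal{A}^{\mathcal{V}_k}(\omega))$ can attain $\lambda_{\max}(\mathcal{A}^{\mathcal{V}_k}(\omega^{(k+1)}))$: by monotonicity (Lemma \ref{monotonicity_Hermite}(i)) the remaining analytic eigenvalues of the projected operator are bounded above by $\lambda_{K+1}(\mathcal{A}(\omega))$, which is at most $\lambda_\ast - \varphi/2$ for $\omega$ near $\omega_\ast$, while $\lambda_{\max}(\mathcal{A}^{\mathcal{V}_k}(\omega^{(k+1)})) \rightarrow \lambda_\ast$ by \eqref{eq:glob_conv_p2}. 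Letting $J \subseteq \{1,\dots,K\}$ denote the set of indices $j$ with $\widetilde{\lambda}_j(\mathcal{A}^{\mathcal{V}_k}(\omega^{(k+1)})) = \lambda_{\max}(\mathcal{A}^{\mathcal{V}_k}(\omega^{(k+1)}))$, the Clarke subdifferential at $\omega^{(k+1)}$ is the convex hull of $\{ \widetilde{\lambda}_j'(\mathcal{A}^{\mathcal{V}_k}(\omega^{(k+1)})) \mid j \in J \}$.

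Now the punchline uses parts (i) and (ii). For large $k$ we have $\omega^{(k+1)} \in (\omega_\ast - \varepsilon_j, \omega_\ast + \varepsilon_j)$ for every relevant $j$. If $J \subseteq \{1, \dots, P\}$, then by part (i) every derivative in the convex hull is at least $\delta_j/2 > 0$, so $0$ cannot belong to the hull, contradicting optimality; symmetrically, $J \subseteq \{P+1, \dots, K\}$ is impossible by part (ii). Hence $J$ must intersect both $\{1, \dots, P\}$ and $\{P+1, \dots, K\}$, yielding the required pair $(j_1, j_2)$.

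\textbf{Main obstacle.} The delicate step is establishing the uniform-in-$k$ neighborhood $\varepsilon_j$ in parts (i)--(ii). The naive bound via the second-derivative formula in Lemma \ref{analytical_prop}(iii) blows up precisely because the top $K$ analytic eigenvalue functions coalesce at $\omega_\ast$, so one cannot control the derivative of $\widetilde{\lambda}_j(\mathcal{A}^{\mathcal{V}_k}(\cdot))$ uniformly by a single-eigenvalue perturbation argument. The fix will be to work with the rank-$K$ spectral projector associated with the gap $\varphi$ (which is analytic and converges uniformly in $k$), reducing the argument to a $K\times K$ analytic matrix-valued function whose eigenvalues and eigenvalue derivatives depend continuously on $k$ and $\omega$ jointly.
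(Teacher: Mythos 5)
Your part (iii) and the full-operator halves of parts (i)--(ii) follow the paper's route: continuity of $\widetilde{\lambda}_j'({\mathcal A}(\omega))$ near $\omega_\ast$, the gap $\varphi$ to exclude the branches beyond the first $K$ from attaining the maximum near $\omega_\ast$, and the first-order condition $0 \in \partial \lambda_{\max}({\mathcal A}^{{\mathcal V}_k}(\omega^{(k+1)}))$ combined with the sign information from (i)--(ii) to force one active branch from $\{1,\dots,P\}$ and one from $\{P+1,\dots,K\}$. That part is sound (modulo making precise, as the paper does via the uniform Lipschitz constant $\eta$ and the interpolation at $\omega^{(k)}$, that the sorted top-$K$ eigenvalues of ${\mathcal A}^{{\mathcal V}_k}(\omega)$ are exactly the branches $\widetilde{\lambda}_1({\mathcal A}^{{\mathcal V}_k}(\omega)),\dots,\widetilde{\lambda}_K({\mathcal A}^{{\mathcal V}_k}(\omega))$ throughout the neighborhood, so that the Clarke derivative at $\omega^{(k+1)}$ is indeed the convex hull you describe).

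The genuine gap is exactly where you locate it, but your proposed fix does not close it. You want to propagate the derivative estimate from $\omega^{(k)}$ to a $k$-independent neighborhood by claiming that the invariant subspaces, eigenvalue branches, and their derivatives of ${\mathcal A}^{{\mathcal V}_k}(\omega)$ converge uniformly to those of ${\mathcal A}(\omega)$ on a fixed compact neighborhood of $\omega_\ast$. Monotonicity and interpolation do not give this: the Hermite interpolation holds only at the iterates $\omega^{(j)}$, $j \le k$, and these cluster at $\omega_\ast$. At a point $\omega$ of the neighborhood bounded away from $\omega_\ast$, the top eigenvectors of ${\mathcal A}(\omega)$ need not be (approximately) contained in ${\mathcal V}_k$ for any $k$, so $\lambda_j({\mathcal A}^{{\mathcal V}_k}(\omega))$ can stay strictly below $\lambda_j({\mathcal A}(\omega))$ uniformly in $k$; a fortiori there is no uniform convergence of the derivatives. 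The paper's argument needs much less: by \cite[Proposition 2.9]{kangal2018} the second derivatives $\widetilde{\lambda}_j''({\mathcal A}^{{\mathcal V}_k}(\omega))$ are bounded in absolute value by a constant $\nu$ on a fixed interval around $\omega_\ast$, uniformly over all large $k$. Combining the interpolated value $\widetilde{\lambda}_j'({\mathcal A}^{{\mathcal V}_k}(\omega^{(k)})) = \widetilde{\lambda}_j'({\mathcal A}(\omega^{(k)})) \ge 3\delta_j/4$ at the single point $\omega^{(k)}$ with this uniform curvature bound shows the derivative of the projected branch cannot drop below $\delta_j/2$ on an interval of radius $\widetilde{\varepsilon} = \Theta(\delta_j/\nu)$ about $\omega^{(k)}$, independent of $k$; since $\omega^{(k)} \to \omega_\ast$, this interval eventually contains a fixed neighborhood of $\omega_\ast$. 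Without either this uniform second-derivative bound or a worked-out substitute, parts (i)--(ii) of your argument are incomplete, and part (iii) inherits the gap since it consumes their conclusions.
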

\begin{proof}
	\textbf{(i)} It follows from \cite[Proposition 2.9]{kangal2018} that there exists an open interval 
	$\mathcal{I}$ containing $\omega_\ast$ and a positive constant $\nu \in {\mathbb R}^+$ such that 
	$\widetilde{\lambda}_j''(\mathcal{A}(\omega))$, as well as 
	$\widetilde{\lambda}_j''(\mathcal{A}^{\mathcal{V}_k}(\omega))$ for all $k$ sufficiently large are 
	bounded in absolute value by $\nu$ uniformly over all $\omega \in \mathcal{I}$ and over all such
	large $k$.
	
	The uniform boundedness of $| \widetilde{\lambda}_j''(\mathcal{A}(\omega)) |$
	combined with $\widetilde{\lambda}_j'(\mathcal{A}(\omega_\ast)) = \delta_j$ imply the
	existence of an open interval 
	$\widehat{\mathcal{I}}:=(\omega_\ast-\hat{\varepsilon},\omega_\ast+\hat{\varepsilon})\subseteq\mathcal{I}$ 
	such that $\widetilde{\lambda}_1'(\mathcal{A}(\omega)) \geq 3\delta_j/4$ $\: \forall \omega\in\widehat{\mathcal{I}}$. 
	
	Since the global minimizer $\omega_\ast$ is assumed to be unique, by Lemma \ref{convergence_omega} 
	we have $\omega^{(k)}\rightarrow\omega_\ast$ as $k \rightarrow \infty$. 	
	Hence, choose $k$ large enough so that 
	$\omega^{(k)} \in \widehat{\mathcal{I}}/2 := 
		(\omega_\ast-\hat{\varepsilon}/2,\omega_\ast+\hat{\varepsilon}/2)$.  
	Now we employ the Hermite interpolation property (Lemma \ref{thm:extended_Hermite_interpolate}) 
	to deduce
	\[
		\widetilde{\lambda}_j'(\mathcal{A}^{\mathcal{V}_k}(\omega^{(k)}))
				\; = \;
		\widetilde{\lambda}_j'(\mathcal{A}(\omega^{(k)}))
				\; \geq \;
				3\delta_j/4
	\]
	for all such large $k$. Since $| \widetilde{\lambda}_j''(\mathcal{A}^{\mathcal{V}_k}(\omega)) |$ 
	is also uniformly bounded in $\widehat{\mathcal{I}}$ by a constant independent of $k$, there exists 
	$\widetilde{\mathcal{I}}:=
		(\omega^{(k)}-\widetilde{\varepsilon},\omega^{(k)}+\widetilde{\varepsilon}) \subseteq \widehat{\mathcal I}$,
	in particular an $\widetilde{\varepsilon} \in {\mathbb R}^+$ independent of $k$, such that 
	\[
		\widetilde{\lambda}_j'(\mathcal{A}^{\mathcal{V}_k}(\omega))
					\; \geq \; \delta_j/2 \quad \forall \omega \in \widetilde{\mathcal{I}}
	\]
	for all such large $k$. By choosing $k$ even larger if necessary, it can be ensured that 
	$\abs{\omega^{(k)}-\omega_\ast} < \widetilde{\varepsilon}/2$ so that 
	$(\omega_\ast - \varepsilon_j,\omega_\ast+\varepsilon_j) \subseteq \widetilde{\mathcal{I}}$ for 
	$\varepsilon_j := \widetilde{\varepsilon}/2$. 
	
	\textbf{(ii)} This can be proven in a way similar to part (i).
	
	\textbf{(iii)} Setting $\varepsilon := \min \{ \varepsilon_j \; | \; j = 1,\dots, K \}$ for $\varepsilon_j$
	as in part (i) and (ii), there exists an open interval 
	$\mathcal{I}=(\omega_\ast-\varepsilon,\omega_\ast+\varepsilon)$ such that for all large $k$
	the following hold:
	\begin{equation*}
		\begin{split}
		\widetilde{\lambda}_j'(\mathcal{A}^{\mathcal{V}_k}(\omega)) \; & = \; \delta_j/2 \; > \; 0 
		\;\; \forall \omega \in {\mathcal I}
		\quad\quad {\rm for} \; j = 1,\dots, P,	\\
		\widetilde{\lambda}_j'(\mathcal{A}^{\mathcal{V}_k}(\omega)) \; & = \; \delta_j/2 \; < \; 0
		\;\; \forall \omega \in {\mathcal I}
		\quad\quad {\rm for} \; j = P+1, \dots, K.
		\end{split}
	\end{equation*}
	Furthermore, let $\widetilde{\mathcal I} := (\omega_\ast-\widetilde{\varepsilon},\omega_\ast+\widetilde{\varepsilon})$
	for $\widetilde{\varepsilon} := \min \{ \varepsilon , \varphi/ (8 \eta) \}$, where $\eta$ is the uniform Lipschitz
	constant in Lemma \ref{thm:Lip_cont}, and $\varphi$ is the eigenvalue gap as in (\ref{eq:eig_gap}).
	Without loss of generality, we make the following
	two assumptions. First, the set $\{ \widetilde{\lambda}_1({\mathcal A}(\omega)), \dots, \widetilde{\lambda}_K({\mathcal A}(\omega)) \}$
	corresponds to the set of $K$ largest eigenvalues of ${\mathcal A}(\omega)$ for all $\omega \in \widetilde{\mathcal I}$. 
	Secondly, $ \omega^{(k)} $,
	$
		\omega^{(k+1)} :=
		\arg\min_{\omega\in\Omega} \lambda_{\max}(\mathcal{A}^{\mathcal{V}_k}(\omega))$ $\in \widetilde{\mathcal I},
	$
	since $\omega^{(k)} \rightarrow \omega_\ast$ as $k \rightarrow \infty$.
	
	We start by showing that 
	$\lambda_{\max}({\mathcal A}^{{\mathcal V}_k}(\omega)) > \widetilde{\lambda}_{j'}({\mathcal A}^{{\mathcal V}_k}(\omega))$
	for all $\omega \in \widetilde{\mathcal I}$ and for $j' = K+1, \dots, \dim \: {\mathcal V}_k$. To this end, 
	first observe that
	\[
		\lambda_K({\mathcal A}(\omega))	-	\lambda_{K+1}({\mathcal A}(\omega))
						\;\; \geq \;\;
		\lambda_K({\mathcal A}(\omega_\ast))	-	\lambda_{K+1}({\mathcal A}(\omega_\ast))	- 	\varphi/4
						\;\; = \;\;	3 \varphi/4
	\]
	for all $\omega \in \widetilde{\mathcal I}$, where we employ the Lipschitz continuity 
	of $\lambda_K({\mathcal A}(\omega))$ and $\lambda_{K+1}({\mathcal A}(\omega))$ with the Lipschitz
	constant $\eta$ (see Lemma \ref{thm:Lip_cont}). 
	In particular, for each $j \in \{1, \dots, K \}$, we have
	\[
		\begin{split}
		3 \varphi/4		\;\; & \leq \;\;
		\widetilde{\lambda}_{j}({\mathcal A}(\omega^{(k)}))	-	\lambda_{K+1}({\mathcal A}(\omega^{(k)}))	\\
					\;\;& \leq \;\;
		\widetilde{\lambda}_{j}({\mathcal A}^{{\mathcal V}_k}(\omega^{(k)}))	-   \lambda_{K+1}({\mathcal A}^{{\mathcal V}_k}(\omega^{(k)}))
		\end{split}
	\]
	where in the second line we exploit the interpolation property (Lemma \ref{thm:extended_Hermite_interpolate}),
	and the monotonicity (i.e., $\lambda_{K+1}({\mathcal A}(\omega^{(k)})) \geq \lambda_{K+1}({\mathcal A}^{{\mathcal V}_k}(\omega^{(k)}))$
	due to part (i) of Lemma \ref{monotonicity_Hermite}).
	Now the last inequality implies
	\begin{equation}\label{eq:inter_eigval_gap}
		\begin{split}
		\widetilde{\lambda}_{j}({\mathcal A}^{{\mathcal V}_k}(\omega))	-   \lambda_{K+1}({\mathcal A}^{{\mathcal V}_k}(\omega))
					\;\; \geq \hskip 25ex	\\
		\widetilde{\lambda}_{j}({\mathcal A}^{{\mathcal V}_k}(\omega^{(k)}))	-   \lambda_{K+1}({\mathcal A}^{{\mathcal V}_k}(\omega^{(k)}))
					-
				\varphi/2		\;\; = \;\;	\varphi/4	
		\end{split}
	\end{equation}
	for all $\omega \in \widetilde{\mathcal I}$. Note that above we make use of the uniform Lipschitz continuity of
	$\widetilde{\lambda}_{j}({\mathcal A}^{{\mathcal V}_k}(\omega))$ and $\lambda_{K+1}({\mathcal A}^{{\mathcal V}_k}(\omega))$
	with the uniform Lipschitz constant $\eta$ independent of the subspace ${\mathcal V}_k$; 
	the latter is immediate from Lemma \ref{thm:Lip_cont}, whereas the former can be seen from 
	$|\widetilde{\lambda}'_{j}({\mathcal A}^{{\mathcal V}_k}(\omega))| \leq \| {\mathcal A}'(\omega) \|_2$.
	(Strictly speaking $\eta$ as in Lemma \ref{thm:Lip_cont} is the uniform Lipschitz constant for the
	sorted eigenvalue $\lambda_{j}({\mathcal A}^{{\mathcal V}_k}(\omega))$, but without loss of generality
	it can be chosen even larger if necessary so that it is also at least as large as
	the uniform Lipschitz constant for $\widetilde{\lambda}'_{j}({\mathcal A}^{{\mathcal V}_k}(\omega))$.) 
	Inequality (\ref{eq:inter_eigval_gap}) means that $\lambda_{K+1}({\mathcal A}^{{\mathcal V}_k}(\omega))$
	is the largest of $\widetilde{\lambda}_{j}({\mathcal A}^{{\mathcal V}_k}(\omega))$ for $j = K+1,\dots, \dim \: {\mathcal V}_k$
	for $\omega \in \widetilde{\mathcal I}$, so for every $j = K+1, \dots, N$ for all $\omega \in \widetilde{\mathcal I}$,
	we deduce
	\[
		\lambda_{\max} ({\mathcal A}^{{\mathcal V}_k}(\omega))		-	\widetilde{\lambda}_{j'}({\mathcal A}^{{\mathcal V}_k}(\omega))
						\;\; \geq \;\;
		\widetilde{\lambda}_{j}({\mathcal A}^{{\mathcal V}_k}(\omega))	-   \lambda_{K+1}({\mathcal A}^{{\mathcal V}_k}(\omega))	
						\;\; \geq \;\;		\varphi/4.
	\]
	
	It follows from the previous paragraph that 
			$\lambda_{\max}({\mathcal A}^{{\mathcal V}_k}(\omega)) = \widetilde{\lambda}_{j}({\mathcal A}^{{\mathcal V}_k}(\omega))$
	for some $j \in \{ 1, \dots, K \}$ for all $\omega \in \widetilde{\mathcal I}$, yet
			$\lambda_{\max}({\mathcal A}^{{\mathcal V}_k}(\omega)) > \widetilde{\lambda}_{j}({\mathcal A}^{{\mathcal V}_k}(\omega))$
	for $j \notin \{ 1, \dots, K \}$.
	Now consider a point $\widetilde{\omega} \in \widetilde{\mathcal I}$ such that 
	\begin{itemize}
		\item $\lambda_{\max}(\mathcal{A}^{\mathcal{V}_k}(\widetilde{\omega})) = 
					\widetilde{\lambda}_{p_1}(\mathcal{A}^{\mathcal{V}_k}(\widetilde{\omega}))	=
						\dots		=
					\widetilde{\lambda}_{p_q}(\mathcal{A}^{\mathcal{V}_k}(\widetilde{\omega}))$  \\
			$\exists p_1, \dots, p_q \in$ $\{1, \dots, P \}$ for some $q \geq 1$, \\
			yet
			$\lambda_{\max}(\mathcal{A}^{\mathcal{V}_k}(\widetilde{\omega})) >
				\widetilde{\lambda}_j(\mathcal{A}^{\mathcal{V}_k}(\widetilde{\omega}))$ for $j \not\in \{ p_1, \dots, p_q \}$, or
		\item	$\lambda_{\max}(\mathcal{A}^{\mathcal{V}_k}(\widetilde{\omega})) = 
					\widetilde{\lambda}_{n_1}(\mathcal{A}^{\mathcal{V}_k}(\widetilde{\omega}))	=
						\dots		=
					\widetilde{\lambda}_{n_s}(\mathcal{A}^{\mathcal{V}_k}(\widetilde{\omega}))$ \\
			$\quad
			\exists n_1, \dots, n_s \in \{P + 1, \dots, K \}$ for some $s \geq 1$, \\
			yet
			$\lambda_{\max}(\mathcal{A}^{\mathcal{V}_k}(\widetilde{\omega})) >
				\widetilde{\lambda}_j(\mathcal{A}^{\mathcal{V}_k}(\widetilde{\omega}))$ for $j \notin \{ n_1, \dots, n_s \}$.
	\end{itemize}
	We shall show that such a point cannot be a minimizer of $\lambda_{\max}(\mathcal{A}^{\mathcal{V}_k}(\omega))$.
	For these two cases, we respectively have
	\begin{equation*}
		\begin{split}
		\partial \lambda_{\max}(\mathcal{A}^{\mathcal{V}_k}(\widetilde{\omega}))
				& =
		{\rm Co} \left\{ \widetilde{\lambda}'_{p_j}(\mathcal{A}^{\mathcal{V}_k}(\widetilde{\omega})) \; | \; j = 1,\dots, q \right\} 
				\;\; \subseteq \;\; (\delta_{p,\min}/2 , \infty), \\
		\partial \lambda_{\max}(\mathcal{A}^{\mathcal{V}_k}(\widetilde{\omega}))
				& =
		{\rm Co} \left\{ \widetilde{\lambda}'_{n_j}(\mathcal{A}^{\mathcal{V}_k}(\widetilde{\omega})) \; | \; j = 1,\dots, s \right\} 
				\;\; \subseteq \;\; (-\infty, \delta_{n,\max}/2) 
		\end{split}
	\end{equation*}
	where $\delta_{p,\min} := \min\{ \delta_{p_j} \; | \; j = 1,\dots, q \} > 0$, and 
	$\delta_{n,\max} := \max\{ \delta_{n_j} \; | \; j = 1,\dots, s \} < 0$. In either case,
	$0 \notin \partial \lambda_{\max}(\mathcal{A}^{\mathcal{V}_k}(\widetilde{\omega}))$ implying
	$\widetilde{\omega}$ cannot be a minimizer of $\lambda_{\max}(\mathcal{A}^{\mathcal{V}_k}(\omega))$.
	Therefore, since $\omega^{(k+1)} \in \widetilde{\mathcal I}$ is a minimizer of 
	$\lambda_{\max}(\mathcal{A}^{\mathcal{V}_k}(\omega))$, we must have 
	$\lambda_{\max}(\mathcal{A}^{\mathcal{V}_k}(\omega^{(k+1)}))
			=
		\widetilde{\lambda}_{j_1}(\mathcal{A}^{\mathcal{V}_k}(\omega^{(k+1)}))
			=	\widetilde{\lambda}_{j_2}(\mathcal{A}^{\mathcal{V}_k}(\omega^{(k+1)}))$
	for some $j_1 \in \{ 1, \dots, P \}$ and $j_2 \in \{ P+1, \dots, K \}$.
\end{proof}

Now we are ready to present the main quadratic rate-of-convergence result in the non-smooth setting;
this result follows from the Hermite interpolation properties in Lemma \ref{thm:extended_Hermite_interpolate},
as well as Lemma \ref{intersection}.
\begin{theorem}[Quadratic Convergence in the Non-smooth Case]
	Suppose that the global minimizer 
		$\omega_\ast := \arg \min_{\omega \in \Omega} \: \lambda_{\max}({\mathcal A}(\omega))$
	is unique and such that the eigenvalue $\lambda_{\max}({\mathcal A}(\omega_\ast))$ is multiple,
	condition (\ref{eq:assumption_gen_der}) holds. The sequence 
	$\left\{\omega^{(k)}\right\}$ generated by Algorithm  \ref{subspace_algorithm} satisfies
	\begin{equation}\label{eq:quad_convergence}
			| \omega^{(k+1)} - \omega_\ast |	\; = \;	O((\omega^{(k)} - \omega_\ast)^2)
	\end{equation}
	for all large $k$.
\end{theorem}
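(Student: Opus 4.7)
The plan is to exploit part (iii) of Lemma~\ref{intersection}: for all sufficiently large $k$, the next iterate $\omega^{(k+1)}$ coincides with an intersection point of two real analytic branches of the projected eigenvalue function, namely $\widetilde{\lambda}_{j_1}({\mathcal A}^{{\mathcal V}_k}(\omega))$ and $\widetilde{\lambda}_{j_2}({\mathcal A}^{{\mathcal V}_k}(\omega))$ with $j_1\in\{1,\dots,P\}$ and $j_2\in\{P+1,\dots,K\}$. I combine this identity with the Hermite interpolation of Lemma~\ref{thm:extended_Hermite_interpolate} at $\omega^{(k)}$ and with second-order Taylor expansions to extract a direct relation between $e_{k+1}:=\omega^{(k+1)}-\omega_\ast$ and $e_k:=\omega^{(k)}-\omega_\ast$.

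Starting from $\widetilde{\lambda}_{j_1}({\mathcal A}^{{\mathcal V}_k}(\omega^{(k+1)}))=\widetilde{\lambda}_{j_2}({\mathcal A}^{{\mathcal V}_k}(\omega^{(k+1)}))$, I would Taylor-expand each side about $\omega^{(k)}$ to first order with a quadratic remainder. The uniform bound on $|\widetilde{\lambda}''_{j}({\mathcal A}^{{\mathcal V}_k}(\omega))|$ that was invoked in the proof of Lemma~\ref{intersection} (via \cite[Proposition 2.9]{kangal2018}) ensures the remainders are $O((\omega^{(k+1)}-\omega^{(k)})^2)$ with a constant independent of $k$. Lemma~\ref{thm:extended_Hermite_interpolate} then allows replacement of $\widetilde{\lambda}_j({\mathcal A}^{{\mathcal V}_k}(\omega^{(k)}))$ and $\widetilde{\lambda}'_j({\mathcal A}^{{\mathcal V}_k}(\omega^{(k)}))$ by $\widetilde{\lambda}_j({\mathcal A}(\omega^{(k)}))$ and $\widetilde{\lambda}'_j({\mathcal A}(\omega^{(k)}))$, which in turn are expanded about $\omega_\ast$ as $\lambda_\ast+\delta_j e_k+O(e_k^2)$ and $\delta_j+O(e_k)$ respectively, with $\delta_j:=\widetilde{\lambda}'_j({\mathcal A}(\omega_\ast))$. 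Substituting these expansions, using $\omega^{(k+1)}-\omega^{(k)}=e_{k+1}-e_k$, and bounding the cross terms such as $O(e_k(e_{k+1}-e_k))$ by $O(e_k^2)+O(e_{k+1}^2)$, the identity reduces to
\begin{equation*}
	(\delta_{j_1}-\delta_{j_2})\,e_{k+1} \; = \; O(e_k^2) \; + \; O(e_{k+1}^2).
\end{equation*}

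By assumption~\eqref{eq:assumption_gen_der}, $\delta_{j_1}>0$ and $\delta_{j_2}<0$; since there are only finitely many admissible pairs $(j_1,j_2)\in\{1,\dots,P\}\times\{P+1,\dots,K\}$, the coefficient $\delta_{j_1}-\delta_{j_2}$ is bounded below by a strictly positive constant that is uniform in $k$. Because $e_{k+1}\to 0$ by the global convergence result in Theorem~\ref{global_convergence}, the $O(e_{k+1}^2)$ term on the right can be absorbed into the left-hand side for all large $k$, yielding $|e_{k+1}|=O(e_k^2)$, which is exactly~\eqref{eq:quad_convergence}.

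The main technical obstacle is the uniform (in $k$) control of the first and second derivatives of the projected real analytic branches $\widetilde{\lambda}_j({\mathcal A}^{{\mathcal V}_k}(\cdot))$ on a fixed neighborhood of $\omega_\ast$. This control is delivered by \cite[Proposition 2.9]{kangal2018}, already invoked in the proof of Lemma~\ref{intersection}; the remaining work is bookkeeping to ensure that the $O(\cdot)$ constants in the Taylor expansions above are uniform across iterations $k$ and across the finitely many possible label pairs $(j_1,j_2)$.
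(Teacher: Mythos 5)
Your proposal is correct and follows essentially the same route as the paper: both rest on part (iii) of Lemma \ref{intersection}, the Hermite interpolation of Lemma \ref{thm:extended_Hermite_interpolate} at $\omega^{(k)}$, second-order Taylor control with constants uniform in $k$, the uniform positive lower bound on $\delta_{j_1}-\delta_{j_2}$, and the final absorption of the $O(e_{k+1}^2)$ term. The only cosmetic difference is that the paper packages the two branches into the single difference function $\lambda = \widetilde{\lambda}_{j_1}-\widetilde{\lambda}_{j_2}$ and uses Taylor's theorem with integral remainder together with the Lipschitz continuity of $\lambda'$, whereas you expand the two branches separately and also expand the full-problem quantities about $\omega_\ast$; the resulting estimates are equivalent.
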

\begin{proof}
	Part (i) an (ii) of Lemma \ref{intersection} shows the existence of an open interval 
	$\mathcal{I} := (\omega_\ast - \varepsilon,\omega_\ast+\varepsilon)$ where 
	$\widetilde{\lambda}_j'(\mathcal{A}(\omega))$ for $j = 1,\dots, P$ is bounded from below uniformly 
	by a positive real number and $\widetilde{\lambda}_j'(\mathcal{A}(\omega))$ for $j = P+1, \dots, K$
	is bounded from above uniformly by a negative real number. Let us consider $k$ large enough
	so that $\omega^{(k)} \in {\mathcal I}$. 
	
	By part (iii) of Lemma \ref{intersection}, there exist
	$j_1 \in \{1, \dots, P \}$ and $j_2 \in \{ P+1, \dots, K \}$ such that 
	$\widetilde{\lambda}_{j_1}(\mathcal{A}^{\mathcal{V}_k}(\omega^{(k+1)})) \;\; = \;\;
						\widetilde{\lambda}_{j_2}(\mathcal{A}^{\mathcal{V}_k}(\omega^{(k+1)}))$.
	For such a pair of $j_1$, $j_2$ define the real analytic functions
	\begin{equation*}
			\begin{split}
			\lambda({\mathcal A}(\omega)) \;\; & := \;\; 
				\widetilde{\lambda}_{j_1}({\mathcal A}(\omega))		-	
						\widetilde{\lambda}_{j_2}({\mathcal A}(\omega))	,	\\
			\lambda({\mathcal A}^{{\mathcal V}_k}(\omega)) \;\; & := \;\; 
				\widetilde{\lambda}_{j_1}({\mathcal A}^{{\mathcal V}_k}(\omega)) 	-  
						\widetilde{\lambda}_{j_2}({\mathcal A}^{{\mathcal V}_k}(\omega)).
			\end{split}		
	\end{equation*}
	Observe that there exists a constant $\zeta \in {\mathbb R}^+$ such that
	\[
		\lambda'({\mathcal A}(\omega^{(k)}))
			 = 
		\widetilde{\lambda}_{j_1}'(\mathcal{A}(\omega^{(k)})) 
					- 
		\widetilde{\lambda}_{j_2}'(\mathcal{A}(\omega^{(k)})) \; \geq \; \zeta
		\;	\Longrightarrow		\;
		\left|  \left[ \lambda'({\mathcal A}(\omega^{(k)})) \right]^{-1} \right|	\; \leq \; \zeta^{-1}
	\]
	independent of $k$. Note also that the real analyticity of $\widetilde{\lambda}_{j_1}(\mathcal{A}(\omega))$,
	$\widetilde{\lambda}_{j_2}(\mathcal{A}(\omega))$ implies the Lipschitz continuity of $\lambda'({\mathcal A}(\omega))$
	on ${\mathcal I}$.
	
	The proof manipulates the following equation, which is immediate from an application of Taylor's theorem 
	with integral remainder:
	\[
		0 \; = \;  \lambda({\mathcal A}(\omega_\ast)) \; = \;
			\lambda({\mathcal A}(\omega^{(k)})) + 
		\int_{0}^{1}\lambda'({\mathcal A}(\omega^{(k)} + t(\omega_\ast-\omega^{(k)}))) (\omega_\ast - \omega^{(k)}) \: dt.
	\]
	We employ $\lambda({\mathcal A}(\omega^{(k)}))=\lambda({\mathcal A}^{{\mathcal V}_k}(\omega^{(k)}))$ 
	(due to Lemma \ref{thm:extended_Hermite_interpolate}) in this equation, then multiply both sides 
	by $\left[ \lambda'({\mathcal A}(\omega^{(k)})) \right]^{-1}$ to obtain
	\begin{equation}\label{taylor1}
		\begin{split}
		0 	\; = \; 
		\left[ \lambda'({\mathcal A}(\omega^{(k)}))\right]^{-1}
			\lambda ({\mathcal A}^{{\mathcal V}_k}(\omega^{(k)}))   +  (\omega_\ast - \omega^{(k)})	 + 
		\left[ \lambda'({\mathcal A}(\omega^{(k)}))\right]^{-1} \times \\
		\int_{0}^{1} \left[\lambda'({\mathcal A}(\omega^{(k)} + t(\omega_\ast - \omega^{(k)}))) - 
													\lambda'({\mathcal A}(\omega^{(k)})) \right]
						(\omega_\ast - \omega^{(k)}) \: dt.
		\end{split}
	\end{equation}
	An application of Taylor's theorem to $\lambda({\mathcal A}^{{\mathcal V}_k}(\omega))$ about $\omega^{(k)}$
	with second order remainder combined with the equalities
	$\lambda({\mathcal A}^{{\mathcal V}_k}(\omega^{(k+1)})) = 0$ and 
	$\lambda'({\mathcal A}^{{\mathcal V}_k}(\omega^{(k)})) = \lambda'({\mathcal A}(\omega^{(k)}))$
	(a corollary of Lemma \ref{thm:extended_Hermite_interpolate}) lead us to
	\[
			\left[ \lambda'({\mathcal A}(\omega^{(k)})) \right]^{-1}\lambda({\mathcal A}^{{\mathcal V}_k}(\omega^{(k)})) 
					\; = \;  (\omega^{(k)}-\omega^{(k+1)}) + O((\omega^{(k)}-\omega^{(k+1)})^2).
	\]
	Now using the last equality in (\ref{taylor1}) gives rise to
	\begin{equation*}
	\begin{split}
		0	\; = \; 	(\omega_\ast - \omega^{(k+1)}) \; + \; O((\omega^{(k)}-\omega^{(k+1)})^2) \; + \;
		\left[ \lambda'({\mathcal A}(\omega^{(k)})) \right]^{-1} \times \hskip 3ex \\
			\hskip 2ex \int_{0}^{1}\left[\lambda'({\mathcal A}(\omega^{(k)} + t(\omega_\ast - \omega^{(k)}))) - 
								\lambda'({\mathcal A}(\omega^{(k)}))\right](\omega_\ast - \omega^{(k)}) \: dt,
	\end{split}
	\end{equation*}
	implying
	\begin{equation}\label{eq:quad_convergence_imed}
		\begin{split}
		\abs{\omega^{(k+1)}-\omega_\ast} \; \leq \; O((\omega^{(k)}-\omega^{(k+1)})^2) \; + \;
		\abs{\left[ \lambda'({\mathcal A}(\omega^{(k)})) \right]^{-1}}	\times \hskip 6ex \\
		\hskip 4ex  \int_{0}^{1}\abs{\lambda'({\mathcal A}(\omega^{(k)} + t(\omega_\ast - \omega^{(k)}))) 
							- \lambda'({\mathcal A}(\omega^{(k)}))} \abs{\omega_\ast - \omega^{(k)}} \: dt.
		\end{split}
	\end{equation}
	Now the desired equality (\ref{eq:quad_convergence}) follows from (\ref{eq:quad_convergence_imed})
	by employing the bound 
	$ \left|  \left[  \lambda'({\mathcal A}(\omega^{(k)}))  \right]^{-1} \right| \leq \zeta^{-1}$,
	the Lipschitz continuity of $\lambda'({\mathcal A}(\omega))$,
	as well as the inequality
	\[
		(\omega^{(k)}-\omega^{(k+1)})^2
				\; \leq \;
		2 \left[    (\omega^{(k+1)}-\omega_\ast)^2+(\omega^{(k)}-\omega_\ast)^2   \right].
	\]	
\end{proof}

\subsection{Numerical Experiments}
In this section we test Algorithm \ref{subspace_algorithm} on numerical examples that concern
three applications, namely the computation of the distance to a nearest definite pair, determining whether a given 
quadratic eigenvalue problem is hyperbolic or not, and inferring the existence of a conjugate gradient iteration 
for the saddle point linear system. In particular, the quadratic convergence of the subspace procedure is illustrated 
in the non-smooth case on examples arising from the latter two applications. 

The subspace framework is terminated when the condition
\[
		\abs{\lambda_{\max}(\mathcal{A}^{\mathcal{V}_k}(\omega^{k+1}))
			-
		\lambda_{\max}(\mathcal{A}^{\mathcal{V}_{k-1}}(\omega^{k}))}<\texttt{tol}
\] 
is satisfied, where \texttt{tol} is a prescribed tolerance; we always set $\texttt{tol}=10^{-12}$ unless otherwise 
specified. Additionally, in all examples, the input parameter $\epsilon$ determining the maximal separation of 
an eigenvalue from the largest eigenvalue for the inclusion of the corresponding eigenvector in the subspace
is set equal to $10^{-6}$. The reduced eigenvalue optimization problems are solved by means of the 
algorithms discussed in Section \ref{small_scale_algorithms}. Indeed, we typically employ the support 
based algorithm (see Section \ref{sec:support_algorithm}), in particular the MATLAB package 
\texttt{eigopt} \cite[Section 10]{Mengi2014}, which is an implementation of this algorithm. 
The package requires a box to be optimized over, and a global lower bound $\gamma$
on the second derivatives of the eigenvalue function; we provide the natural and theoretically
sound choices for these, namely $[0,2\pi]$ for the box and $-\norm{A}_2-\norm{B}_2$ for the
lower bound on the second derivatives. \\

\noindent
\textbf{Distance to the nearest definite pair.} 
Consider the Hermitian matrix pair $(A,B)$ with
\[
	A \; = \;\frac{\widetilde{G}+\widetilde{G}^\ast}{2} 
			\qquad 	\text{and} 		\qquad 
	B \; = \; -\ri \frac{\widetilde{G}-\widetilde{G}^\ast}{2}
\]
and $\widetilde{G}:=Ge^{\ri \pi/6}$, where $G$ is the $640\times640$ Grcar matrix. An application of the 
subspace framework for the computation of the inner numerical radius of $A+ \ri B$ yields the global minimizer 
$\theta_\ast = 2.617992877994$ of $\lambda_{\max}(A\cos\theta+B\sin\theta)$, as well as 
$\lambda_{\max}(A\cos\theta_\ast+B\sin\theta_\ast)=0.634045490256$. Now Theorem \ref{char_inner}, 
implies $0\in F(A + \ri B)$; this is confirmed by the plot of $F(A + \ri B)$ given on the top left in 
Figure \ref{fig:large_scale_dist}. Consequently, the pair $(A,B)$ is indefinite.

We deduce from Theorem \ref{solving_distance} that
$
	d_\delta(A,B)=0.644045490256
$
for $\delta = 10^{-2}$. Figure \ref{fig:large_scale_dist} illustrates the field of values of $A + \ri B$, 
$(A+\Delta A) + \ri (B+\Delta B)$ and $\widetilde{A} + \ri \widetilde{B} =e^{-\ri \varphi}((A+\Delta A) + \ri (B+\Delta B))$
for the choice of $\delta = 10^{-2}$, where $\Delta A,\Delta B$, $\varphi$ are as in Algorithm \ref{alg_dist}. 
The diamonds in the figure correspond to the points where the inner numerical radii are attained. 
Although $\lambda_{\max}(A\cos\theta_\ast + B\sin\theta_\ast)$ is simple, it turns out to be very close to the second 
largest eigenvalue; these two eigenvalues differ by an amount on  the order of $10^{-7}$. 
Quadratic convergence is achieved by our subspace framework; this is depicted in Table \ref{table_dist},
which lists the iterates of Algorithm \ref{subspace_algorithm} with $\epsilon=10^{-6}$.
\begin{figure}[h]
	$\begin{array}{rl}
	\includegraphics[width=6.2cm]{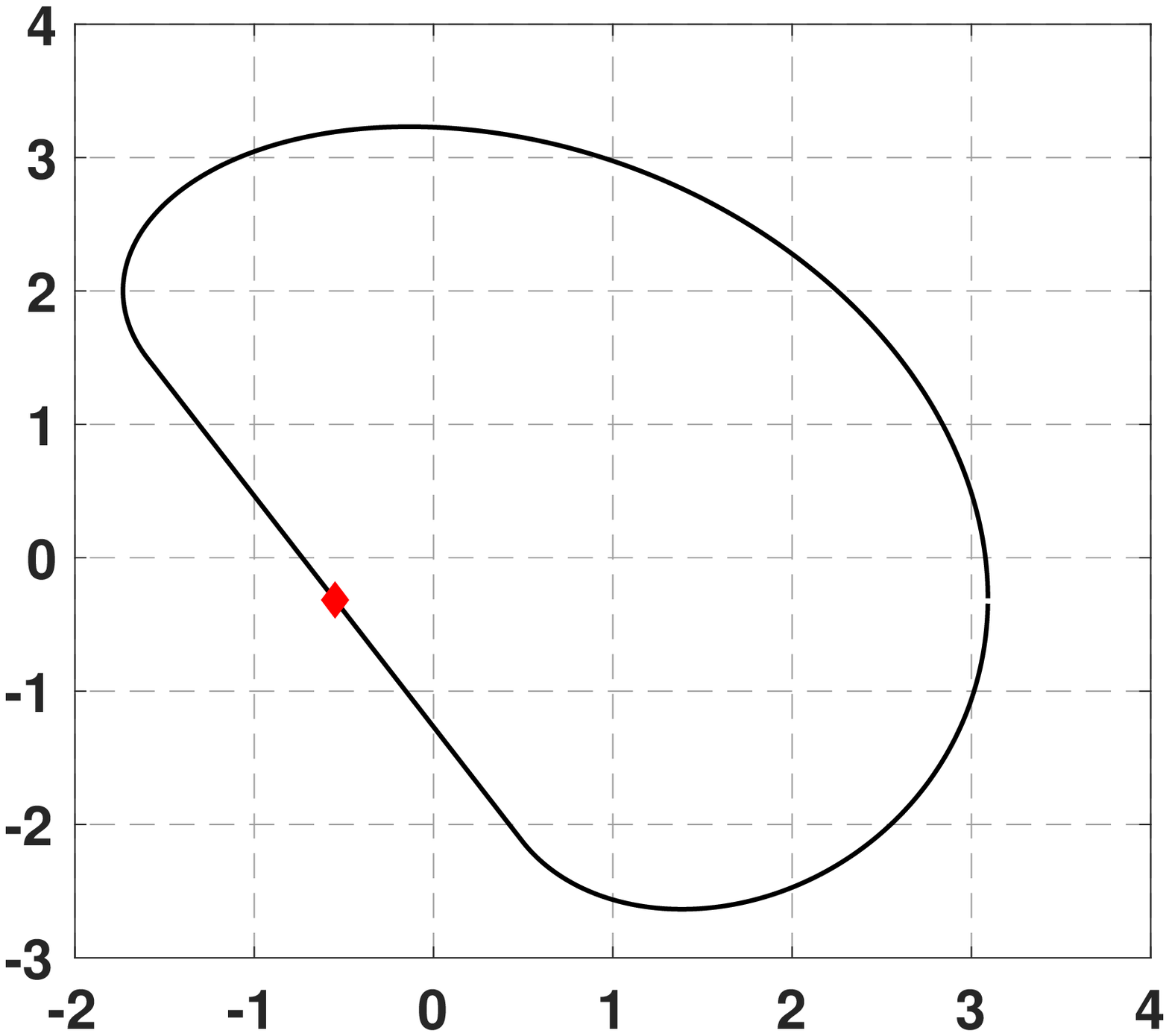} &
	\includegraphics[width=6.2cm]{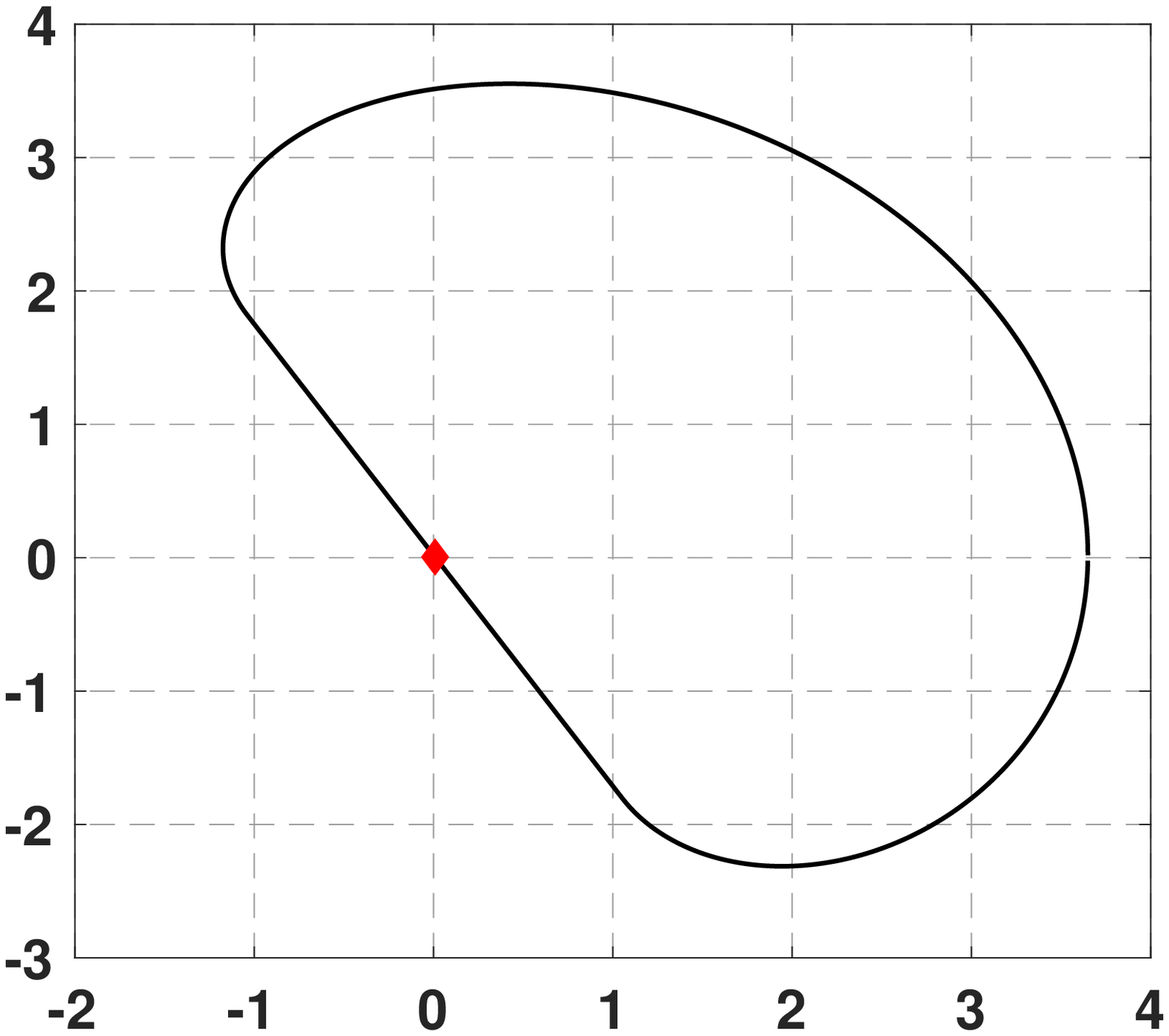}\\
	\multicolumn{2}{c}{\includegraphics[width=6.2cm]{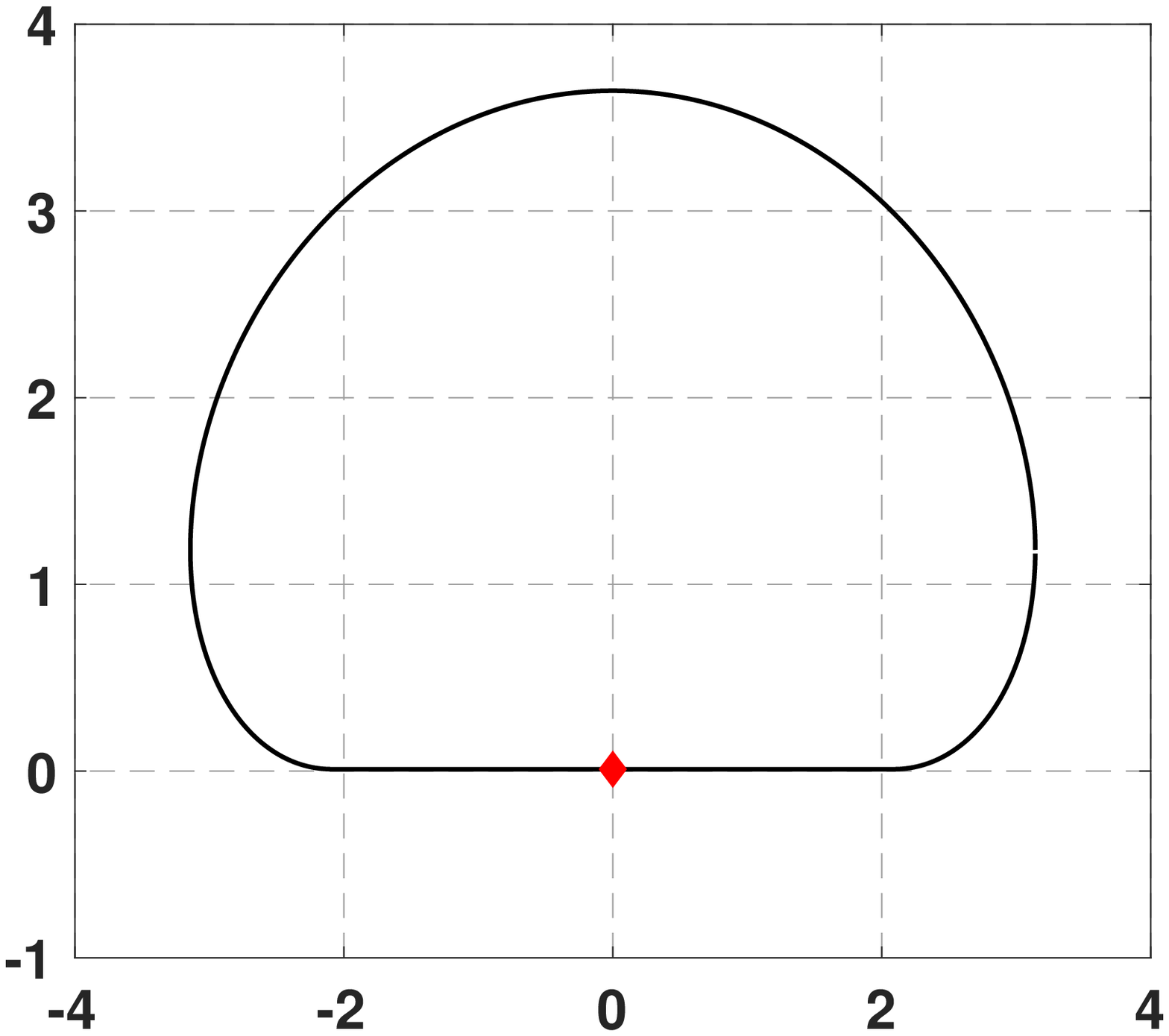}}
	\end{array}$
	\caption{The figure depicts the field of values of $A+iB$ (top left), $A+\Delta A+ i(B+\Delta B)$ (top right), 
	$e^{-i\psi}(A+\Delta A +{\rm i}(B+\Delta B))$ (bottom) for the choice of $A = (G + G^\ast)/2$ and
	$B = -\ri (G - G^\ast)/2$ with $G$ denoting the $640\times 640$ Grcar matrix.
	The minimal perturbation matrices $\Delta A, \Delta B$ and the quantity $\psi$ are 
	those returned by Algorithm \ref{alg_dist} for $\delta = 10^{-2}$. The diamonds mark 
	the points where the inner numerical radii are attained.} 
	\centering
	\label{fig:large_scale_dist}
\end{figure}
\begin{table}[tb]
	\begin{center}
		\begin{tabular}{ c| c | c | c }
			\hline
			\hline
			$k$ & $p$ & $\lambda_{\max}({\mathcal A}^{{\mathcal V}_k}({\mathcal \omega}^{(k+1)}))$ & $\omega^{(k+1)}$ \\
			\hline
			4 & 4 & 0.629840138568 &2.618127739876 \\
			5 & 5 & 0.\underline{63}2130046510 & 2.\underline{617}077493245  \\
			6 & 6 & 0.\underline{634045}279755 & 2.\underline{617993}771783\\
			7 &  8& 0.\underline{634045490256} & 2.\underline{617993877986} \\
			8 & 10 & 0.\underline{634045490256}  & 2.\underline{617993877986}\\
			\hline
			\hline
		\end{tabular}
	\end{center}
	\caption{This table concerns an application of the subspace framework to the example of Figure \ref{fig:large_scale_dist}, 
	which involves the minimization of the largest eigenvalue of ${\mathcal A}(\omega) = A \cos \omega + B \sin \omega$ 
	over $\omega$ for certain Hermitian matrices $A, B$ defined in terms of the $640\times 640$ Grcar matrix. 
	The globally minimal value of the reduced eigenvalue function 
	$\lambda_{\max}(\mathcal{A}^{\mathcal{V}_k}(\omega^{(k+1)}))$ 
	and $\omega^{(k+1)} := \arg\min_{\omega} \lambda_{\max}({\mathcal A}^{{\mathcal V}_k}(\omega))$
	by Algorithm \ref{subspace_algorithm} starting with $\omega^{(1)} = 0.45$ are listed along with the subspace 
	dimension $p:= {\rm dim} \: \mathcal{V}_k$ with respect to the iteration number $k$.}
	\label{table_dist}
\end{table}

\medskip

\noindent
\textbf{Testing hyperbolicity of QEP.} 
We consider the quadratic eigenvalue problem $Q(\lambda)=\lambda^2 A +\lambda B +C$ that is linked to
a damped-mass spring system, where $A,B,C$ are $500\times 500$ matrices such that $A = I$,
\begin{equation}\label{hyper_example}
	B = 
		\beta
		\left[
			\begin{array}{ccccc}
				20 & -10 & & & \\
				-10 & 30 & \ddots & & \\
				     & \ddots & \ddots & \ddots & \\
				     & & \ddots & 30 & -10 \\
				     & & & -10 & 20
			\end{array}
		\right],
		\;
	C =
		\left[
			\begin{array}{cccc}
				15 & -5 & & \\
				-5 & 15 & \ddots & \\
				    & \ddots & \ddots & \\
				    & & -5 & 15 
			\end{array}
		\right]
\end{equation}
for a given real number $\beta>0$. We construct the coefficient matrices of this QEP by using the 
MATLAB toolbox NLEVP \cite{betcke2013nlevp}, and determine the hyperbolicity of the QEP for different $\beta$ 
values by testing the definiteness of the associated $1000\times1000$ pair $(A_1,B_1)$ as in (\ref{hyper_pair}). 
Table \ref{test_hyper} indicates whether the pair $(A_1,B_1)$ is definite or not for eight equally spaced $\beta$ values 
in the interval $[0.500, 0.528]$. The number of iterations to compute the inner numerical radius
of $A_1+ \ri B_1$ (up to the prescribed tolerance \texttt{tol}$=10^{-12}$) is eight for each $\beta$ 
value. This is a non-smooth example; the largest eigenvalue of $A_1\cos\theta+B_1\sin\theta$ has multiplicity $2$ 
at the minimizing $\theta$ for each $\beta$ value, yet we observe the quadratic convergence of 
Algorithm \ref{subspace_algorithm} consistent with what is expected in theory.
This is hinted by Table \ref{QEP_mult}, which lists the iterates $\lambda_{\max}({\mathcal A}^{{\mathcal V}_k}(\omega^{(k+1)}))$
and $\omega^{(k+1)}$ with respect to the iteration number $k$ for $\beta=0.512$ and $\beta=0.524$. 
\begin{table}[tb]
	\begin{center}
		\begin{tabular}{|c| c | c | c | c | c | c | c | c |}
			\hline
			$\beta$ & $0.500$ & $0.504$ & $0.508$ & $0.512$ & $0.516$ & $0.520$ & $0.524$ & $0.528$  \\
			\hline
			definite & no & no & no & no & no & yes & yes & yes \\
			\hline
		\end{tabular}
	\end{center}
	\caption{The definiteness of the pair $(A_1,B_1)$ defined as in (\ref{hyper_pair}) associated with the quadratic
	eigenvalue problem $Q(\lambda) = \lambda^2 A + \lambda B +C$ for $A, B, C$ as in (\ref{hyper_example}) and
	for several values of $\beta$.}
	\label{test_hyper}
\end{table}

\begin{table}[tb]	

	\begin{center}
	
		\begin{tabular}{cccc}
			\hline
			\hline
			\multicolumn{4}{c}{{\sc $\beta=0.512$}}			\hskip 20ex								 \\
			$k$	&	$p$ & $\lambda_{\max}({\mathcal A}^{{\mathcal V}_k}(\omega^{(k+1)}))$  & $\omega^{(k+1)}$  		\\
			\hline
			4		&	6	&	0.218717671040			& 1.972911641774  						\\
			5		&	8	&	0.\underline{00}6821928930	&  1.\underline{89}0604000858 			 \\
			6		&	10	&	0.\underline{008594}146027	& 1.\underline{8971}61234772 			 	\\
			7		&	12	&	0.\underline{008594402114}	& 1.\underline{897151450}236  	  		\\
			8		&	14	&	0.\underline{008594402114}	& 1.\underline{897151450}823  			\\
			\hline
			\hline
		\end{tabular}
		
		\vskip 3ex
		
		\begin{tabular}{cccc}
			\hline
			\hline
			\multicolumn{4}{c}{{\sc $\beta=0.524$}}			\hskip 20ex 								\\
			$k$	&	$p$ & $\lambda_{\max}({\mathcal A}^{{\mathcal V}_k}(\omega^{(k+1)}))$ & $\omega^{(k+1)}$ 		\\
			\hline
			4	&	6	&	0.233188266820 & 1.985354587351										\\
			5	&	8	&	-0.\underline{00}6697302959	& 1.\underline{90}1962042436  				\\
			6	&	10	&	-0.\underline{004923}289259	&1.\underline{9083}57152861					\\
			7	&	12	&	-0.\underline{004923056427}&1.\underline{90834804}5018	  				\\
			8	&	14	&	-0.\underline{004923056427} & 1.\underline{90834804}1619					\\
			\hline
			\hline
		\end{tabular}
		
	\end{center}
		
	\caption{The minimal value of the reduced eigenvalue function 
	$\lambda_{\max}({\mathcal A}^{{\mathcal V}_k}(\omega^{(k+1)}))$,
	the corresponding global minimizer 
	$\omega^{(k+1)} := \arg \min_{\omega \in \Omega} \lambda_{\max}({\mathcal A}^{{\mathcal V}_k}(\omega))$, 
	and the subspace dimensions $p := {\rm dim} \: \mathcal{V}_k$ are listed with respect to the iteration number $k$ 
	for the example concerning the hyperbolicity of a QEP.}
	\label{QEP_mult}
\end{table}

\medskip

\noindent
\textbf{Linear systems in saddle point form.} 
We consider again the matrix pair $(\mathcal{A},\mathcal{J})$ discussed in Section \ref{sec:num_exper_small}, 
where $\mathcal{A} \in {\mathbb R}^{(n+m)\times (n+m)}$ is the coefficient matrix of the form (\ref{saddle_form}) 
that originates from a discretization of the Stokes equation, and $\mathcal{J}=$diag$(I_n,-I_m)$.  
We run our subspace procedure to minimize $\lambda_{\max}(\mathcal{A}\cos\theta+\mathcal{J}\sin\theta)$ 
over $\theta \in [0,2\pi]$. The computed results coincide with the ones obtained from a direct application of
Algorithm \ref{eigopt} in Section \ref{sec:num_exper_small}. As remarked before, 
$\lambda_{\max}(\mathcal{A}\cos\theta_\ast+\mathcal{J}\sin\theta_\ast)$ 
has multiplicity three at the global minimizer $\theta_\ast$. Our subspace framework again exhibits a
quadratic convergence, which is evident from Table \ref{Saddle_mult}.

\begin{table}[tb]
	\begin{center}
		\begin{tabular}{ c| c | c | c }
			\hline
			\hline
			$k$ & $p$ & $\lambda_{\max}({\mathcal A}^{{\mathcal V}_k}({\mathcal \omega}^{(k+1)}))$ & $\omega^{(k+1)}$ \\
			\hline
			2 & 2 & -0.876669786135 & 2.023688714623 \\
			3 & 4 & -0.\underline{02}3285504705 & 3.\underline{08}8556373675  \\
			4 & 7 & -0.\underline{02222}5058342 & 3.\underline{08750}3074213\\
			5 & 10 & -0.\underline{022224901666} & 3.\underline{087502918535} \\
			6 & 13 & -0.\underline{022224901666}  & 3.\underline{087502918535}\\
			\hline
			\hline
		\end{tabular}
	\end{center}
	\caption{
	This table concerns the coefficient matrix ${\mathcal A}$ for the saddle point system arising from the Stokes
	equation, and the positive definiteness of ${\mathcal A} - \mu {\mathcal J}$ for some $\mu$, where
	$\mathcal{J} := {\rm diag}(I_n, -I_m)$. The minimal value of the reduced eigenvalue function 
	$\lambda_{\max}({\mathcal A}^{{\mathcal V}_k}({\mathcal \omega}^{(k+1)}))$, the global 
	minimizer $\omega^{(k+1)}$ and the subspace dimension $p := {\rm dim} \: \mathcal{V}_k$ are listed 
	with respect to the iteration number $k$, when the subspace procedure is applied to minimize 
	$\lambda_{\max}( {\mathcal A} \cos \omega + {\mathcal J} \sin \omega )$ over $\omega$.}
	\label{Saddle_mult}
\end{table}

\medskip

\noindent
\textbf{Performance of the Subspace Framework.} 
Finally we test the performance of Algorithm \ref{subspace_algorithm} for the computation of the inner numerical 
radius of Hermitian pairs $(A_n,B_n)$ of the form
\begin{equation}\label{exam:performance_subspace}
	A_n \;= \;\frac{C_n+C_n^\ast}{2} 
		\qquad \text{and} \qquad 
	B_n \; = \; -\ri \frac{C_n-C_n^\ast}{2},
\end{equation}
with $C_n =P_n + iR_n$, the matrix $P_n$ denoting the $n\times n$ matrix obtained from the finite difference discretization 
of the Poisson operator by employing the five-point formula, and $R_n$ denoting a random $n\times n$ sparse matrix 
generated by the Matlab command \texttt{spran(n,n,20/n)}. Table \ref{performance_subspace} lists the computed values 
of the inner numerical radius by the subspace framework, number of subspace iterations and run-times in seconds 
to reach the specified accuracy for the pairs $(A_n,B_n)$ of sizes varying between $10000$ and $90000$.
The number of subspace iterations, as well as the time to solve the reduced eigenvalue optimization problems, 
do not vary much with respect to $n$. However, the time required for the computation of the largest eigenvalue
of the full problem at every iteration increases with respect to $n$. In essence the total runtime is determined
by these large-scale eigenvalue computations for large values of $n$.

\begin{center}
	\begin{table}
		\begin{tabular}{|c||ccccc|}
			\hline
			$n$			&	$\#$ iter			&	total time		&	reduced prob	&	eigval comp		& $\zeta(A_n+ \ri B_n)$	 \\
			\hline
			\hline
			10000		&	21				&	22.62		&	 4.41			&	17.16			&	653.69	\\
			22500		&	26				&	110.61		&	 6.43			&	100.96			&	983.80	\\
			40000		&	24				&	168.44		&	5.91			&	156.91   			&	1316.77	\\
			62500		&	20				&	315.34		&	13.18		&	291.97			&	1667.44	\\
			90000		&	21				&	594.99		&	11.12		&	569.67			&	1995.49	\\
			\hline
		\end{tabular}
		\caption{ \noindent The table concerns a Hermitian matrix pair $(A_n,B_n)$ with $A_n, B_n \in {\mathbb R}^{n\times n}$ 
		defined as in (\ref{exam:performance_subspace}) in terms of $C_n = P_n + \ri R_n$, the Poisson matrix $P_n$ 
		and the sparse random matrix $R_n$. The computed values of the inner numerical radius $\zeta(A_n + \ri B_n)$
		by Algorithm \ref{subspace_algorithm}, 
		the number of subspace iterations (2nd column) and the computation times (3rd-5th columns) 
		are listed for various values of $n$. For each value of $n$, the total run-time, 
		the time spent for the solution of the reduced eigenvalue optimization problems, and
		the time spent for the large-scale eigenvalue computations in seconds
		are given in the 3rd, 4th, 5th columns, respectively. 
		}
		\label{performance_subspace} 
	\end{table}
\end{center}

\section{Conclusion}
The algorithm in \cite{Mengi2014} based on piece-wise quadratic model functions appears to be 
quite effective in dealing with global minimization problems involving a non-convex largest eigenvalue
function of a Hermitian matrix depending on one parameter. On the other hand, the subspace framework
in \cite{kangal2018} is quite effective to deal with such problems when the Hermitian matrix is large.
It accurately reduces the large dimensionality by projecting the Hermitian matrix to small subspaces
formed of eigenvectors. Here we have illustrated the efficiency of these algorithms on the computation
of the inner numerical radius. 

As a by-product, we have generalized the subspace framework of \cite{kangal2018}
to better cope with non-smoothness at the minimizer. The generalized subspace framework adds
not only the eigenvector corresponding to the largest eigenvalue, but also the eigenvectors 
corresponding to nearby eigenvalues.

We have proven rapid convergence results for both algorithms in the non-smooth case when 
the largest eigenvalue is not simple generically. The algorithm in \cite{Mengi2014} is shown to
generate a sequence $\{ \ell^{(k)} \}$ of lower bounds such that both $\{ \ell^{(2k)} \}$ and
$\{ \ell^{(2k+1)} \}$ converge to the globally smallest value of the largest eigenvalue function
at a quadratic rate. The generalized subspace framework is shown to generate a sequence of
iterates $\{ \omega^{(k)} \}$ that converge to the global minimizer at a quadratic rate.
What we witness in practice is consistent with these theoretical findings. To this end, several 
numerical results concerning the inner numerical radius computation in the non-smooth case
are reported confirming the expected in theory.


\appendix

\section{Proof of Quadratic Convergence for the Level-Set Algorithm}\label{sec:level-set_quad}

	We assume $\lambda_{\max}(H(\theta))$ is not constant near $\theta_\ast$, as otherwise $r^{(j)} = f_\ast$
	for all $j$ large enough and there is nothing to prove. Consequently, there exists an interval 
	$[\theta_\ast -\delta,\theta_\ast+\delta]$ such that $\lambda_{\max}(H(\theta))$ is strictly decreasing on 
	$[\theta_\ast -\delta,\theta_\ast]$, and is strictly increasing on $[\theta_\ast ,\theta_\ast+\delta]$.
	Furthermore, for $j$ large enough, there exists an open interval $I^{(j)}_\ell \subset [\theta_\ast-\delta ,\theta_\ast+\delta]$ 
	satisfying $f(\theta) < r^{(j)}$ for all $\theta \in I^{(j)}_\ell$ and $f(\theta)$ attains the value $r^{(j)}$
	at the endpoints of this interval, as well as $r^{(j+1)} = f(\phi^{(j+1)}_\ell)$ where $\phi^{(j+1)}_\ell$
	is the midpoint of $I^{(j)}_\ell$. We assume $I^{(j)}_\ell$ is of the form $I^{(j)}_\ell = ( \ell^{(j)}_\ell , u^{(j)}_\ell )$ 
	with $\ell^{(j)}_\ell < u^{(j)}_\ell$. If $I^{(j)}_\ell$ is not of this form, the argument below applies to
	the interval $( \ell^{(j)}_\ell , u^{(j)}_\ell + 2\pi )$

	Now we can choose $\delta > 0$ small enough if necessary so that $\lambda_{\max}(H(\theta))$ has the Taylor
	expansion
	\begin{equation}\label{eq:Tay_exp}
		\lambda_{\max}(H(\theta))=\lambda_{\max}(H(\theta_\ast)) + \beta(\theta-\theta_\ast)^{2k} + O((\theta-\theta_\ast)^{2k+1})
	\end{equation}
	for all $\theta \in [\theta_\ast-\delta ,\theta_\ast+\delta]$ where $\beta > 0$ and $k \geq 1$ due to the fact that $\theta_\ast$
	is a smooth minimizer of $\lambda_{\max}(H(\theta))$. For the sake of clarity, we only consider the case $k = 1$
	here; the arguments below generalize in a straightforward manner for $k > 1$. The eigenvalue function $\lambda_{\max}(H(\theta))$
	is one-to-one on $[\theta_\ast - \delta, \theta_\ast]$ and on $[\theta_\ast, \theta_\ast + \delta]$, which imply
	the existence of the inverse functions 
	\begin{equation*}
		\begin{split}
			\theta_- : [\lambda_{\max}(H(\theta_\ast)),\lambda_{\max}(H(\theta_\ast-\delta))]\rightarrow[\theta_\ast -\delta,\theta_\ast],	 \\
			\theta_-(\lambda) = \theta		\;\;\;	\Longleftrightarrow		\;\;\;		\lambda_{\max}(H(\theta)) = \lambda,		\hskip 14ex
		\end{split}
	\end{equation*}
	and
	\begin{equation*}
		\begin{split}
			\theta_+ : [\lambda_{\max}(H(\theta_\ast)),\lambda_{\max}(H(\theta_\ast+\delta))]\rightarrow[\theta_\ast ,\theta_\ast+\delta],	\\
			\theta_+(\lambda) = \theta		\;\;\;	\Longleftrightarrow		\;\;\;		\lambda_{\max}(H(\theta)) = \lambda.		\hskip 14ex
		\end{split}
	\end{equation*}
	These functions have Puiseux series expansions \cite[page 246]{dienes1957taylor} of the form
	\begin{eqnarray}
		\theta_-(\lambda)	&=&	\theta_\ast + c_-(\lambda - \lambda_{\max}(H(\theta_\ast)))^{1/2} + O(\lambda - \lambda_{\max}(H(\theta_\ast))) 
		\label{eq:Pui_min} \\
		\theta_+(\lambda)	&=&	\theta_\ast + c_+(\lambda - \lambda_{\max}(H(\theta_\ast)))^{1/2} + O(\lambda - \lambda_{\max}(H(\theta_\ast))) 
		\label{eq:Pui_pos}
	\end{eqnarray}
  	and satisfy $\theta_-(r^{(j)})=\ell^{(j)}$, $\theta_+(r^{(j)})=u^{(j)}$. Exploiting $\lambda_{\max}(H(\ell^{(j)}))=r^{(j)}$, we have
	\[
		r^{(j)} - \lambda_{\max}(H(\theta_\ast))	\;\; = \;\;	\beta (\ell^{(j)} - \theta_\ast)^2	+	O((\ell^{(j)} - \theta_\ast)^3)
	\]
	from the Taylor expansion (\ref{eq:Tay_exp}), and
	\[
		( \ell^{(j)} - \theta_\ast ) \;\; = \;\; c_- ( r^{(j)}  -  \lambda_{\max}(H(\theta_\ast)) )^{1/2} + O(r^{(j)} - \lambda_{\max}(H(\theta_\ast)))
	\]
	from (\ref{eq:Pui_min}), from which we infer $c_-=-\beta^{-1/2}$. In a similar way, by exploiting $\lambda_{\max}(H(u^{(j)}))=r^{(j)}$
	in equations (\ref{eq:Tay_exp}) and (\ref{eq:Pui_pos}), we obtain $c_+=\beta^{-1/2}$. Hence,
	summing up (\ref{eq:Pui_min}) and (\ref{eq:Pui_pos}) and evaluating the resulting expression at $\lambda = r^{(j)}$
	give rise to
 	\begin{equation}\label{middle_point}
  	  	\frac{\ell^{(j)}+u^{(j)}}{2}=\theta_\ast + O(r^{(j)} - \lambda_{\max}(H(\theta_\ast))). 
    	\end{equation}
    	Finally, we substitute (\ref{middle_point}) in (\ref{eq:Tay_exp}) and use $\lambda_{\max}(H((\ell^{(j)} + u^{(j)})/2)) = r^{(j+1)}$ to obtain
    	\[
    		r^{(j+1)} - \lambda_{\max}(H(\theta_\ast))  =  O((r^{(j)} - \lambda_{\max}(H(\theta_\ast)))^2)
    	\]
    	as desired.

\bibliography{largescale_DNDP}
\end{document}